\definecolor{red}{rgb}{1.0,0.0,0.0}
\definecolor{blu}{rgb}{0.0,0.0,1.0}
\definecolor{gre}{rgb}{0.03,0.50,0.03}
\def\gre#1{{\textcolor{gre}{#1}}}
\definecolor{amethyst}{rgb}{0.6, 0.4, 0.8}
\definecolor{blue-violet}{rgb}{0.54, 0.17, 0.89}
\definecolor{darkviolet}{rgb}{0.58, 0.0, 0.83}
\def\sqr#1#2{{\vcenter{\vbox{\hrule height .#2pt \hbox{\vrule
 width .#2pt height#1pt \kern#1pt \vrule
width .#2pt} \hrule height .#2pt}}}}
\def\qedo{\hbox{\hskip 6pt\vrule width6pt height7pt
depth1pt  \hskip1pt}\bigskip}
\def\E{\mathbb E}
\def\P{\mathbb P}
\def\bR{\mathbb R}
\def\R{\mathbb R}
\def\calh{\mathcal H}
\def\cK{\mathcal K}
\def\call{\mathcal L}
\def\cald{\mathcal D}
\def\<{\left\langle }
\def\>{\right\rangle }
\newtheorem{theorem}{Theorem}[section]
\newtheorem{lemma}[theorem]{Lemma}
\newtheorem{proposition}[theorem]{Proposition}
\newtheorem{definition}[theorem]{Definition}
\newtheorem{example}[theorem]{Example}
\newtheorem{remark}[theorem]{Remark}
\newtheorem{assumption}[theorem]{Assumption}
\newtheorem{notation}[theorem]{Notation}
\newtheorem{problem}[theorem]{Problem}
\numberwithin{equation}{section}
\newcommand{\radon}{\ensuremath{\mathcal M}}
\newcommand{\nada}[1]{}
\begin{document}

\title{Robust portfolio choice with sticky wages}
\author{Sara Biagini\thanks{-* sbiagini@luiss.it, fgozzi@luiss.it --  Department of Economics and Finance, Luiss University, Italy.}, \hspace{0.1cm}  Fausto Gozzi* and Margherita Zanella \thanks{ margherita.zanella@polimi.it -- Politecnico di Milano, Italy.}} \maketitle

 \begin{abstract}

 We present a robust version of the life-cycle optimal portfolio choice problem in the presence of labor income, as introduced in Biffis, Gozzi and Prosdocimi \cite{BGP} and Dybvig and Liu \cite{DYBVIG_LIU_JET_2010}. In particular, in \cite{BGP}  the influence of past wages on the future ones is modelled linearly in the evolution equation of labor income, through a given weight function. The optimisation relies on the resolution of  an infinite dimensional HJB equation. \\
 \indent   We improve the state of art in  three ways. First, we allow the weight to be a  Radon measure. This accommodates for more realistic weighting of the sticky wages, like, e.g., on a discrete temporal grid according to some periodic income.  Second, there is a  general correlation structure between labor income and stocks market. This naturally affects the optimal hedging demand, which may increase or decrease according to the correlation sign.  Third, we allow the weight to change with time, possibly lacking  perfect identification. The uncertainty is specified by a  given set of Radon measures $K$, in which the weight process takes values. This renders the inevitable uncertainty on how the past affects the future, and includes the standard case of error bounds on a specific estimate for the weight.
Under uncertainty averse preferences, the decision maker takes a maxmin approach to the problem.  Our analysis confirms the intuition: in the infinite dimensional setting, the optimal policy remains the best investment strategy under the worst case weight.\\

\noindent {\bf Keywords}: Robust optimization,  Merton problem, sticky wages, stochastic delayed equations, uncertainty, infinite dimensional Hamilton-Jacobi-Bellman.\\
\noindent {\bf JEL subject classifications:}  C32,  D81, G11, G13, J30. \\
\noindent {\bf AMS subject classifications:} 91G80, 91G10, 49K35, 34K50.
\\
\noindent{\bf Acknowledgements:} We warmly thank  Enrico Biffis and Giovanni Zanco for fruitful conversations on the topic. We are grateful to the referees and to the editors for their suggestions and  careful  reading, which helped us to substantially improve the paper.
 \end{abstract}

\tableofcontents

\section{Introduction}
\label{sez:1}

The goal of the present paper is to solve the life-cycle optimal portfolio choice problem of an agent, who allocates her wealth to risky assets and a riskless bond. The feasible allocations satisfy a borrowing constraint against future labor income.  Labor income dynamics here incorporate the stickiness feature of the wages, also called nominal rigidity.
As systematically outlined by  Keynes   in the article \emph{The General Theory of Employment, Interest and Money} (1936),   wages and prices do not adjust immediately to shocks in the economy.  A vast literature on the topic has followed since then, and we refer the reader to \cite{BGP} for a comprehensive list  of relevant papers.  \\
\indent Coming to modeling aspects, there is empirical evidence that ARMA processes, with their memory, offer satisfactory models for stochastic labor income (see e.g. \cite{Abowd-Card}, \cite{Mao-Sabanis}, \cite{McLaughling}).  However, solutions to some classes of stochastic delay differential equations (SDDE)  can be seen as weak limits of ARMA processes as shown in \cite{DUNS-Go-Tran,LORENZ_2006,REISS_2002,TranPHD}.
Therefore, SDDEs allow for a more realistic description of the  labor income evolution in continuous time models. \\

\indent Tractability is an issue, especially when looking for explicit solutions. The simplest possible delay equations are linear ones. In the context of portfolio selection with sticky labor income, this is the choice made in \cite{BGP}.
The authors model the stocks  by a multidimensional geometric Brownian motion. The labor income is perfectly correlated to the stock market, and  follows a linear SDDE in which the delay is present   in the drift.  The delay  term is given by  past wages weighted
on a \emph{bounded} time window $[-d, 0]$. In fact, there is evidence of  bounded memory in labor income adjustment delays, see \cite{MEGHIR_PISTAFERRI_2004}. The weighting is made according to a measure $\phi$, which is absolutely continuous wrt the Lebesgue measure on $[-d,0]$.   As for investment possibilities, the  agent is allowed to borrow against future wages and therefore the budget constraint is on total wealth, namely current financial wealth plus the present value of future labor income.  Given the budget constraint,  the authors in the cited \cite{BGP} are able to find an explicit solution to expected power utility maximisation from consumption and bequest.  The methodology relies on the resolution of an infinite dimensional HJB equation. The solution structure shows that this problem  can be seen as an infinite dimensional version of the classic Merton problem, as it is in fact Markovian in the current wealth and  in the labor income present and past path. We will refer to the optimization problem in \cite{BGP} as to \emph{the infinite dimensional  Merton problem}.\\

\indent We expand the above framework   in three ways.  First, the weight $\phi$ given to the past is   here  a general Radon measure on $[-d,0]$, not necessarily absolutely continuous with respect to  the Lebesgue measure.  This accommodates for more realistic weighting, like e.g. on a discrete temporal grid according to a specific, periodic income.
  The optimal policy (consumption, bequest and hedging) we find  is a constant proportion policy with respect to total wealth, \emph{modulo a hedging demand correction} due to the presence of labor income. Second, we allow for a general correlation structure between the driving multidimensional noises in the stock market and labor income respectively. According to the dependence of the labor income on the stock market shocks,  we analytically recover   variations in the hedging demand, which is an empirically observable phenomenon. In fact, the higher the correlation is, the lower the hedging demand:  diversification becomes  an issue and so the investor is more prudent.  This  result is in line with what obtained in the pioneering work \cite{VICERIA} in a discrete time setup. On the other hand, the papers \cite{ BGP}  and \cite{DYBVIG_LIU_JET_2010} (the latter in  the no delay case) treat only the case with perfect correlation, thus obtaining a \emph{negative} hedging demand correction. \\
\indent  Third, we allow the  delay measure $\phi$ to depend on $(\omega, t)$ and to take values  in a given set $K$ of Radon measures.  Alternatively said, we take into account possible lack of information on $\phi$,  due to the complexity of the global  dynamics in the economy and the set $K$ represents the confidence set.   There is an extensive literature on portfolio selection under uncertainty, see e.g \cite{BP} and the references there cited, \cite{NN}, \cite{riedel}.  However, to our knowledge  the present paper is the first to incorporate uncertainty in portfolio selection with labor income.  Assuming an uncertainty averse investor (see \cite{macche} for a definition),   she will first minimize over the delay measures  and then optimize over strategies.\\
\indent  Under the assumption that the set $K$ has an order minimum $\nu$, we show that the investor becomes observationally equivalent to one facing the infinite dimensional Merton problem under the  minimum  delay measure $\nu$. The  optimal strategy  is  found as the  solution to an infinite dimensional HJB equation. This intuitive result relies on a   non trivial existence and uniqueness result for the labor income equation with measure-valued  stochastic delay, proved in Appendix B.\\

 The article is organized as follows.
Section 2  describes the model  when $\phi$ is a fixed (namely, not time dependent) Radon measure on the given time window.
Section 3 is dedicated to the solution of the agent's infinite dimensional optimization problem, which generalizes the one given in \cite{BGP}. There, we illustrate the main novelties, consisting in the study of the driving operator $A_\phi$ and its adjoint (see in particular Subsection 3.2).
In Section 4, the robust setting comes into play and we solve the minimax version of the  infinite dimensional problem. An Appendix  collects a few  technical results and  concludes the paper.

\section{The model}
\subsection{The state equations}
 The exposition concerns the case in which stocks and labor income are perfectly correlated, as in \cite{BGP}. Such choice allows for a more direct  comparison  with respect to the results in  \cite{BGP}.  As anticipated in the Introduction, we also handle the general correlation case, which is quite relevant from an economical viewpoint.  We  illustrate how to do it in Remark \ref{General-correlation} and  in Remark  \ref{VICEIRA}, where we provide the  general form of the  optimal hedging demand. \\
\indent Consider a filtered probability space $(\Omega, \mathcal F, \mathbb F,  P)$, and an $\mathbb F$-adapted vector valued process $(S_0,S)$ representing the price evolution of a riskless asset, $S_0$, and $n$ risky assets, $S=(S_1,\ldots,S_n)^\top$.  Their  dynamics are
\begin{eqnarray}\label{DYNAMIC_MARKET}
\left\{\begin{array}{ll}
dS_0(t)= S_0(t) r  dt\\
dS(t) =\text{diag}(S(t)) \left(\mu dt + \sigma dZ(t)\right)\\
S_0(0)=1\\
S(0)\in {\mathbb R}^n_{+},
\end{array}
\right.
\end{eqnarray}
in which $Z$ is a $n$-dimensional Brownian motion and $\mathbb F$ is its  augmented natural filtration. The interest rate $r>0$ is  constant, the drift $\mu \in \mathbb R^n$ and the volatility $\sigma$ is an invertible matrix in  $\mathbb R^{n \times n} $. \\
 A representative agent  is endowed with initial wealth $w\geq 0$. She is active on the market and receives wages till her death.  The time of death $\tau_{\delta}$ is modeled as an exponential random variable of parameter $\delta >0$.  Further  assumptions are listed here below.
\begin{assumption}\label{hp:tau}

\begin{itemize}
  \item[(i)]
The death time $\tau_{\delta}$ is independent of  $Z$.

  \item[(ii)]  The reference filtration is the minimal enlargement of the Brownian filtration satisfying the usual assumptions and making $\tau_\delta$ a stopping time.  From  \cite[Section IV.3]{Protter}, the filtration is $\mathbb G := ( \mathcal G_t \big)_{t \ge 0}$  with the sigma-field $\mathcal G_t$ given by
\begin{equation*}
 	\mathcal G_t:=   \mathcal F_t \vee  \sigma (\tau_{\delta}\wedge t),
\end{equation*}
in which $\sigma(X)$ denotes as usual the sigma-field generated by a random variable $X$.  We remark that the filtration $\mathbb{G}$ is  complete and right continuous.
\footnote{The filtration generated by two right continuous filtrations is not right continuous in general. Instead of $\mathbb{G}$, one should consider $\mathbb{G}^*$ defined as:
$ \mathcal G^*_t:= \bigcap_{u>t} \, \  \mathcal F_u \vee  \sigma (\tau_{\delta}\wedge u) $.  However here the processes $Z(\cdot)$ and $\tau_{\delta}\wedge \cdot$ are independent and the filtrations $\mathbb{F}$ and $\sigma (\tau_{\delta}\wedge t)$ are right continuous.  Hence   Proposition 1.12 in \cite{AKSAMITJEANBLANC17} applies, and
  $\mathbb{G} =  \mathbb{G}^*$.}
\end{itemize}
\end{assumption}

By \cite[Proposition 2.11-(b)]{AKSAMITJEANBLANC17}, if a process $A$ is ${\mathbb G}$-predictable then there exists a process $\widetilde{A}$
which is ${\mathbb F}$-predictable and
 \begin{equation}\label{eq:GFpred}
 \text{for } P-a.s. \  \omega,  \qquad   A(s,\omega) = \widetilde{ A}(s,\omega) \qquad
 \forall s \in [0,\tau_\delta(\omega)]
\end{equation}
 Namely,
 $P$-a.s. $\widetilde{ A}$ has the same path as $A$ till the death time $\tau_{\delta}$.  Hence $A$ and $\widetilde{A}$ are indistinguishable up to $\tau_\delta$.
Therefore, we may and do work with ${\mathbb F}$-predictable,  pre-death versions of the processes involved, state variables and controls.\\
\indent The wealth of the agent at time $t\ge 0$ is denoted by $W(t)$  and the wage rate is $y(t)$. She can invest  in the riskless and risky assets, and can consume at a  rate $ c(t)\geq 0$. The wealth allocated to the risky assets is $ \theta(t)\in \mathbb R^n$  at each time $t\geq 0$.  The agent has a bequest target $B(\tau_\delta)$ at  death, where the bequest  process $ B(\cdot)\geq 0$  is also chosen by the agent. To cover the gap between bequest and wealth at death: $$ B(\tau_{\delta})-W(\tau_{\delta}),$$ the agent pays an instantaneous life insurance premium of  $\delta( B(t)-W(t))$ for $t<\tau_\delta$. As in \cite{DYBVIG_LIU_JET_2010}, we interpret a negative  value $ B(t)- W(t)<0$ as a  life annuity trading wealth at death for receiving a positive income flow $\delta(W(t)- B(t))$ while living.

\begin{notation}
\label{not:basic} Let us introduce the following notations:
\begin{itemize}
\item
$\call^1 = L^1_{loc} (\Omega \times \mathbb R_+, \mathbb{F},P\otimes dt ;
\mathbb R)$,
$\call^1_+ = L^1_{loc} (\Omega \times \mathbb R_+, \mathbb{F},P\otimes dt ;
\mathbb R_+)$,
$\call_n^2 := L^2_{loc}(\Omega \times \mathbb R_+, \mathbb{F},P\otimes dt;\mathbb R^n)$. Here, the subscript ``loc'' refers to the time variable, e.g. $f \in  L^1_{loc} (\Omega \times \mathbb R_+, \mathbb{F},P\otimes dt ;\mathbb R)$ means that
$f\in L^1 (\Omega \times[0,T], \mathbb{F},P\otimes dt ;\mathbb R)$ for all $T>0$.
  \item
 $\Pi^0:=  \Big\{ (c, B,\theta) \mid c \text{ and } B \text{ belong to }  \call^1_+, \text{ and } \theta \in \call^2_n \Big\}$. The triplets in   $\Pi^0$ are denoted by  $\pi$.
\item $\mathbf 1 = (1,\dots, 1)^\top$, the unitary vector in $\mathbb R^n$.
\item
 The space of deterministic functions on $[-d,0)$ which are  square integrable with respect to the Lebesgue measure $ds$ is denoted by $L^2([-d,0);\mathbb{R})$,   $ds$ being understood.\footnote{ It is well known that $L^2([-d,0);\mathbb{R})$ is the same space as $L^2((-d,0);\mathbb{R})$ or $L^2([-d,0];\mathbb{R})$ which are usually written simply as
$L^2(-d,0;\mathbb{R})$. Here we prefer to keep the above notation as it is coeherent with the fact that $[-d,0)$
is the set of past times in our setting.}
\item
The space of Radon measures on $[-d,0]$ that are null in $0$ (i.e. non atomic at $0$) is denoted by $\mathcal M^0$.  We recall that a  Radon measure on the compact  {set} $[-d,0]$  is simply a Borel regular measure. The reader who is not familiar with these notions is referred to  \cite{AB}.
\end{itemize}
\end{notation}

 When $(c, B, \theta) \in \Pi^0$, the stochastic differential equation (SDE) for the wealth $W$ is the equation for the Merton  optimal investment-consumption problem. As
  in \cite{DYBVIG_LIU_JET_2010,BGP,DGZZ} the drift incorporates
the wage rate $y(t)$ and the life insurance $\delta(B(t)-W(t))$:
\begin{equation}\label{eq:stateW}
dW(t)=\left[W(t) r + \theta(t)^\top (\mu-r\mathbf{1})  + y(t) - c(t)
-\delta\left(B(t)-W(t)\right)\right] dt + \theta(t)^\top \sigma dZ(t)
\end{equation}
with the initial condition $W(0)=w\in \R$. \\
\indent The state equation for the labor income rate $y$  is more delicate, as it is a Stochastic Delayed Differential Equation, SDDE as a shorthand (also called Path-Dependent SDE). In the present context, an SDDE for $y$ was first introduced in \cite{BGP} as an innovation with respect to the standard linear SDEs approach used  e.g. in \cite{DYBVIG_LIU_JET_2010}, so to  render the empirical ``stickiness'' feature of the wages. The dynamics  of $y$ there were those of a geometric Brownian motion with driving noise $Z$, drift $\mu_y \in \mathbb{R}$ and volatility $\sigma_y \in  \mathbb R^n$ plus an additional, linear delay term in the drift.  In \cite{BGP} \gre{(and later also in \cite{DGZZ})}, such delay term was given by the integral of the past path of $y$ with respect to  a measure $\phi$  absolutely continuous wrt $ds$. Here, we allow $\phi$ to be a   measure in  $\mathcal M^0$:
\begin{align}\label{DYNAMICSy}
\begin{split}
\left\{\begin{array}{ll}
dy(t) = & \left[ y(t) \mu_y+\int_{-d}^0 y(t+s)\phi(ds)   \right] dt + y(t)\sigma_y^\top    dZ(t),\\[2mm]
y(0)= & x_0, \quad y(s) = x_1(s) \mbox{ for $s \in  [-d,0)$},
\end{array}\right. \end{split}
\end{align}
in which the initial data are $x_0 \in \R $ and $x_1\in L^2([-d,0);\R)$.
  To our knowledge, in the extant literature there is no result of well posedness for such SDDE.  The main issue here is to give sense, in the stochastic case\footnote{In the deterministic case this has been done e.g. in
\cite[Chapter 4]{BENSOUSSAN_DAPRATO_DELFOUR_MITTER}.}, to the integral $\int_{-d}^{0} y(t+s)\phi(ds)$
for $t<d$ since it involves the integral in $\phi$ of the function $x_1$ which is only defined $ds$-a.e.
This issue becomes more evident in the subsequent Section 4. There, to accommodate for robustness we allow $\phi$ to depend on $(\omega,t)$.  The corresponding state equation for labor income becomes \eqref{EV-y-phi()}. The Appendix \ref{app:wellposed} (in particular Proposition \ref{Prop-SDDE-app}) is precisely dedicated to the proof of  the  well posedness  of the general \eqref{EV-y-phi()}, and the continuity of the solution wrt the initial data.
The well posedness of \eqref{DYNAMICSy} above then follows as a particular case of such more general result.
For the reader's convenience we provide here the well posedness result needed now and in the subsequent
Sections 2.2, 2.3 and 3.

\begin{proposition}
\label{pr:statelabor}
Let $\phi\in \mathcal M^0$. Then, for every $x_0 \in \R $ and $x_1\in L^2([-d,0);\R)$
the SDDE \eqref{DYNAMICSy} admits
a unique strong(in the probabilistic sense) solution in $\mathcal{L}^2_1$ with $P$-a.s. continuous paths.
\end{proposition}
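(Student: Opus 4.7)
The plan is to deduce Proposition \ref{pr:statelabor} as a particular case of the more general, time- and $\omega$-dependent well-posedness result in Appendix \ref{app:wellposed} (Proposition \ref{Prop-SDDE-app}), specialized to a constant measure $\phi$. For a self-contained argument, one can carry out a direct Picard iteration on $\mathcal L^2_1$ restricted to $[0,T]$ for arbitrary $T>0$, adapted to Radon-measure delay.

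The central technical point is making sense of $\int_{-d}^0 y(t+s)\phi(ds)$ for $t\in[0,d)$, where the past path mixes the (to be constructed) continuous solution with the merely $ds$-a.e.\ defined initial datum $x_1$. I would split
$$
\int_{-d}^0 y(t+s)\phi(ds) \;=\; \int_{[-d,-t]} x_1(t+s)\phi(ds) \;+\; \int_{(-t,0]} y(t+s)\phi(ds).
$$
The second piece is unambiguous for continuous $y$ and Radon $\phi$. For the first piece, using Fubini against the total variation $|\phi|$,
$$
\int_0^d \int_{[-d,-t]} |x_1(t+s)|^2 \, |\phi|(ds)\, dt \;=\; \int_{[-d,0]} \!\!\int_0^{-s} |x_1(t+s)|^2\, dt\, |\phi|(ds) \;\le\; |\phi|([-d,0])\, \|x_1\|_{L^2([-d,0))}^2,
$$
so that $t\mapsto \int_{[-d,-t]} x_1(t+s)\phi(ds)$ is a well-defined element of $L^2([0,d])$, with norm controlled by $\|x_1\|_{L^2}$. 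This is the right way to resolve the ambiguity in $x_1$: one cannot in general evaluate it pointwise, since $\phi$ may carry atoms in the interior of $[-d,0)$, but after convolution against $\phi$ enough regularity is gained to serve as a drift.

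With this in hand, I would set up the Picard map
$$
(\Phi y)(t) := x_0 + \int_0^t \Bigl[\, y(u)\mu_y + \int_{-d}^0 y(u+s)\phi(ds)\,\Bigr]\, du + \int_0^t y(u)\sigma_y^\top dZ(u), \qquad t\in[0,T],
$$
on the space of $\mathbb F$-adapted processes agreeing with $x_1$ on $[-d,0)$ and lying in $L^2(\Omega\times[0,T])$. Combining It\^o's isometry, the bound from the previous paragraph applied pathwise and then integrated, and Gronwall's inequality, one shows that $\Phi$ is a contraction on an equivalent norm weighted by $e^{-\lambda t}$ for $\lambda$ large. Banach's fixed-point theorem delivers a unique solution on $[0,T]$; concatenation over $T\uparrow\infty$ yields the global solution in $\mathcal L^2_1$. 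Continuity of paths is then automatic, since the drift integrand is in $L^1_{\mathrm{loc}}$ in time (again by the $L^2$ estimate above) and the It\^o integral against $Z$ has continuous paths.

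The main obstacle is precisely the step handling the first piece of the split integral: the naive pointwise interpretation fails for $\phi$ with atoms, and one must work at the $L^2$ level and use Fubini/total-variation to obtain a usable a.e.-pointwise object. This is exactly what Proposition \ref{Prop-SDDE-app} in Appendix \ref{app:wellposed} formalizes in the broader setting of $(\omega,t)$-dependent measure delays, from which the present statement is immediate.
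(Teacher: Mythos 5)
Your primary route---obtaining the statement as the constant-$\phi$ specialization of Proposition \ref{Prop-SDDE-app}---is exactly what the paper does, so on that level the proof matches. Your self-contained sketch, however, is a genuinely different (and more elementary) implementation than the one in Appendix \ref{app:wellposed}: (i) you make sense of the drift for $t<d$ by splitting off the $x_1$-part and applying Fubini against $|\phi|$, which is in substance the content of Lemma \ref{newlemma.}, except that the paper proceeds the other way around, proving the $L^2([0,T])$--$L^2([-d,T])$ bound for the delay operator on \emph{continuous} paths and then extending by density; note that your displayed computation only bounds $\int_0^d\int|x_1(t+s)|^2\,|\phi|(ds)\,dt$, and you still need one Cauchy--Schwarz against the total mass $|\phi|([-d,0])$ to conclude that the convolution itself lies in $L^2([0,d])$ (and a remark that the resulting $dt$-class is independent of the Borel representative of $x_1$); (ii) you run the Picard contraction in $L^2(\Omega\times[0,T])$ with the It\^o isometry and a weight $e^{-\lambda t}$, then recover continuous paths a posteriori from the integral equation, whereas the paper fixes the point directly in $L^p(\Omega;C([0,T];\R))$ for $p>2$, using the factorization method to control the supremum of the stochastic integral and a stochastic Gronwall lemma to get uniqueness down in $L^2(\Omega;C([0,T];\R))$. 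Your lighter setting is sufficient for the present proposition (a unique solution in $\mathcal{L}^2_1$ with a continuous modification), but the paper's heavier framework buys sup-norm $L^p$ moments and continuous dependence on the initial datum, which are exactly the ingredients invoked later (e.g.\ in the approximation steps of Lemma \ref{Labor-MON}), so the two approaches are complementary rather than interchangeable in the paper's overall architecture.
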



\begin{remark}
Note that the drift of the equation (and the data) is split into the contribution of the present $y(t)$ and of the past path of  $y$ on $[-d, 0)$ for ease of presentation, as explained in Remark \ref{rm:phinonatomic} below.
Note also that the integrability condition on $x_1$ is required with respect to the \emph{Lebesgue measure} $ds$, and not with respect to $\phi(ds)$, as it would seem natural at first sight.
This requirement stems from our main goal, which is  the robust case treated  in Section 4.  When  $\phi$ varies it seems more natural to consider integrability of the datum $\phi$ wrt to the standard reference measure $ds$.

\end{remark}

Once we know, from Proposition \ref{pr:statelabor} above
the properties of the solution to \eqref{DYNAMICSy},
the existence and uniqueness of the strong solution to the SDE for $W$  are immediate
(see e.g. \cite[Section 5.6]{KARATZAS_SHREVE_91}).
\hfill\qedo
\begin{remark}
\label{rm:phinonatomic}
 The state equation for $y$ can be equivalently reformulated without splitting in the drift    the contribution of past an present.  Note first that since $\phi$ is non atomic at $0$, in \eqref{DYNAMICSy} the  $\phi$-average of the past path on the window $[-d, 0)$  coincides with the $\phi$-average on  the whole path on $[-d,0]$. By the way, this is  why    we did not specify whether in $\int_{-d}^0 y(t+s) \phi(ds)$ the second extremum of integration was taken or not. And in the following we will sometimes refer to $[-d, 0)$ and sometimes to $[-d,0]$  to indicate the delay time window, without distinction. \\
\indent Now, the equivalent formulation without splitting  is performed by simply adding to $\phi$ an atom in $\{0\}$ of size $\mu_y$:  $$ y(t) \mu_y+\int_{-d}^0 y(t+s)\phi(ds) =  \int_{-d}^0 y(t+s) d\psi(s), \  \  \text{ where } \psi = \phi + \mu_y \delta_{0}  $$

\hfill\qedo
\end{remark}

 \begin{notation}
\label{not:solWy}
The unique solution of the labor income equation at time $t\ge 0$ will be denoted by $y^{x}(t;\phi)$ to underline its dependence on the initial datum $x:=(x_0,x_1)$ and on the measure $\phi$. When no confusion is possible, we w
ill omit $\phi$ and/or $x$ in the superscript.
Similarly, given $\pi\in \Pi^0$ we will denote the unique solution to the wealth equation by $W^{w,x}(t;\phi,\pi)$.  Sometimes, the superscripts $w,x$, and/or the arguments  $\pi$ and $\phi$  will be omitted as well.
\end{notation}

\subsection{The optimization problem}
The agent is risk averse, with a power HARA  utility function with  parameter $\gamma \in (0,1) \cup (1, \infty)$.  As in  \cite{BGP}, given the controls $\pi=\left(c,B,\theta\right)\in \Pi^0$, her expected utility from lifetime consumption and bequest is
\begin{equation}\label{exp-ut}
J(c,B):=\mathbb E \left(\int_{0}^{\tau_{\delta}} e^{-\rho t }
\frac{c(t)^{1-\gamma}}{1-\gamma} dt
+ e^{-\rho \tau_{\delta} } \frac{\big(k B(\tau_\delta)\big)^{1-\gamma}}{1-\gamma}
\right),
\end{equation}
where $k>0$ weights the money left at death and $\rho >0$ is the time impatience coefficient.
Since the death time is independent of $Z$ and exponentially distributed, by an application of Fubini Theorem the objective $J$ can be rewritten as
\begin{eqnarray}\label{gaininfty}
\mathbb E \left(\int_{0}^{\infty} e^{-(\rho+ \delta) t }
\left( \frac{c(t)^{1-\gamma}}{1-\gamma}
+ \delta \frac{\big(k B(t)\big)^{1-\gamma}}{1-\gamma}\right) dt
\right).
\end{eqnarray}
  The agent  will receive a  contingent claim $\Psi$ with maturity $T$ only if she survives that date.   So, if $Q$ denotes the unique martingale measure, then
  $$ \text{price} (\Psi) = E^Q[e^{-rT} \Psi\,  I_{\{T< \tau_{\delta}\}}]. $$
 Given that $\tau_\delta$ is independent of the market drivers, and $P(T< \tau_{\delta}) =e^{-\delta T}$,  the pre-death state-price density $\xi $  of the agent solves
\begin{equation}\label{PRICE_DENSITY}
\left\{\begin{array}{ll}
d \xi (t)& = - \xi(t)(r +\delta) dt  -\xi(t) \kappa^\top dZ(t),\\
\xi(0)&=1,
\end{array}\right.
\end{equation}
where $\kappa$ is the market price of risk
\begin{equation}\label{DEF_KAPPA}
\kappa:= (\sigma)^{-1} (\mu- r \mathbf 1).
\end{equation}
 Following \cite{BGP} and \cite{DYBVIG_LIU_JET_2010}-Problem 1, the agent is allowed to borrow against future income.
The budget constraint is then nonnegativity of the total wealth $\Gamma$, evaluated as current wealth $W$ plus present value of future labor income. More precisely, given $w\in \R$, $x\in\R\times L^2([-d,0);\R)$, $\phi \in \mathcal{M}^0$, $\pi\in \Pi^0$, we define
\begin{equation}\label{budget-constraint}
\Gamma^{w,x}(t;\phi,\pi) := W^{w,x}(t;\phi,\pi)
+ \xi^{-1}(t)\mathbb E\left( \int_t^{\infty} \xi(u) y^{x}(u;\phi) du \Bigg\vert \mathcal F_t\right)  \geq 0,
\end{equation}
omitting $w,x,\phi,\pi$ when it's clear from the context.\\
\indent The admissible controls are then the sufficiently integrable triplets $\pi$ such that the associated total wealth $\Gamma$ is nonnegative:
\begin{equation}\label{DEF_PI_FIRST_DEFINITION}
\Pi(w,x;\phi):= \Big\{ \pi\in \Pi^0
 \mid
\Gamma^{w,x}(t;\pi,\phi)\geq 0\,\quad \forall t \geq 0\Big\}.
 \end{equation}
Finally, we can state our optimization problem.
\begin{problem}
\label{pb:first}
Find the maximizers and the maximum value of:
$$
J(c,B) = \mathbb E \left(\int_{0}^{\infty} e^{-(\rho+ \delta) t }
\left( \frac{c(t)^{1-\gamma}}{1-\gamma}
+ \delta \frac{\big(k B(t)\big)^{1-\gamma}}{1-\gamma}\right) dt
\right)
$$
over all $\pi=(c,B,\theta) \in \Pi(w,x;\phi)$.
\end{problem}


 In order for Problem \ref{pb:first} to be tractable, the parameters must respect some conditions. First we define
\begin{equation}\label{DEF_BETA}
\beta:= r+\delta - \mu_y+  \sigma_y^\top \kappa, \qquad \qquad
\beta^\phi_{\infty}:= \int_{-d}^0   e^{(r+\delta)s}  \phi (ds),
\end{equation}

\begin{assumption}\label{HYP_BETA-BETA_INFTY}
We assume the following:
\begin{itemize}
  \item[(i)]
  \begin{equation}\label{EQ_HYP_BETA-BETA_INFTY}
\beta- \beta^\phi_{\infty} >0.
\end{equation}

  \item[(ii)]
\begin{equation}\label{HYP_POSITIVITY_DEN_NU}
\rho + \delta -(1-\gamma) (r +\delta +\frac{\kappa^\top \kappa}{2\gamma }) >0,
\end{equation}
\end{itemize}
\end{assumption}
The inequality (i) is needed to prove (see \cite[Theorem 2.1]{BGPZ}) the infinite dimensional  Markovian representation of the total wealth in \eqref{eq:GammaRewritten} as a joint function of current wealth  $W(t)$ and of the present and past path of labor income,
$(y(t+s))_{s\in [-d,0]}$.
The inequality (ii) is identical to the Merton's well-posedness condition with impatience,  the rates being adjusted by $\delta$ for the presence of the time of death.

The above  Assumption  will be considered valid throughout the paper, with one important exception. The inequality \eqref{EQ_HYP_BETA-BETA_INFTY} will  be relaxed in Section \ref{sec3} because there the weight $\phi$  is more general.\\

\indent The next Remark illustrates what would change if \emph{the driving Brownian motion for labor income has a general correlation structure with the stock market}. \\
\begin{remark}\label{General-correlation}
    Given a filtered probability space, consider two independent, $n$-dimensional Brownian motions $Z$ and $Z^*$. The filtration $\mathbb{F}$ is now the augmented, natural filtration generated by the two, while $\tau_\delta$ and $\mathbb{G}$ would be defined similarly to what done in Assumption \ref{hp:tau}. The Brownian motion $Z$ still indicates the stock noise driver. In order to build a Brownian motion with general correlation structure with respect to  $Z$, consider two $n$-dimensional lower triangular  matrices  $ C_1, C_2 $ such that
     $$ C_1^TC_1 + C_2^TC_2 =I_n  $$
     in which $I_n$ is the $n$-dimensional identity matrix.
    Then, define
    $$ Z^y := C_1 Z + C_2 Z^*  $$
     It is now clear that $Z^y$ is a Brownian motion, and its covariation  with the stock market is
    $$ \langle Z^y, Z\rangle_t = t C_1 $$
    If $ {C_2} = 0_n$, then $C_1=I_n$ and we are in the perfect correlation case, which is the one explicitly presented in  the paper.  The case of perfect, negative correlation is given by $ C_1=-I_n, C_2=0$.  The null correlation case corresponds to $C_1=0_n$, and   $Z^y=Z^*$.  Another  example is  obtained by picking correlations $(\rho_1, \ldots, \rho_n)$ and $C_1$ to be the diagonal matrix with these correlations on the diagonal. Then $C_2$ is the diagonal matrix with diagonal given by $(\sqrt{1-\rho_1^2}, \ldots \sqrt{1-\rho_n^2})$.  \\
    \indent  All the  definitions and results  in this Section remain valid if, in the SDDE for $y$, $Z$ is replaced by $Z^y$. The  only important difference is that the market is now incomplete, as the labor income idiosyncratic noise $Z^*$ cannot be hedged off. There are infinitely many state price densities now. Among them however, the state price density $\xi$ in \ref{PRICE_DENSITY} corresponds to the Foellmer and Schweizer's Minimal Martingale Measure  \cite{FS}, and its theoretical properties  are useful in practice (hedging, option pricing). So, if we continue to use  $\xi$ among the possible choices, the evaluation of derivatives and  of future labor income will follow the same lines as in the case with perfect correlation \eqref{budget-constraint}.  The consequences of correlation will appear in Section 3, since  the infinite dimensional HJB equation has a more general second order term.  This  in turn affects the optimal hedging strategy,  as explicitly  calculated in Remark \ref{VICEIRA}.
\end{remark}

\subsection{Positivity of the labor income process}
\label{sec:pos}
Consider $C([-d,0]; \mathbb{R})$, the space of continuous, real valued functions on $[-d,0]$ endowed with the sup norm. The space $C([-d,0]; \mathbb{R})$ is a Banach lattice with the standard pointwise order.
The  norm dual of $C([-d,0]; \mathbb{R})$ is  $\mathcal{M}$, the space of  Radon measures on $[-d,0]$ (for further details on this duality,  see \cite{AB}).  The ordering on $\cal M$ is the natural one (see \cite[Section 8.10]{AB}), and  we briefly recall its definition. Given $\mu, \nu \in \cal M$
\begin{equation}\label{order}
\mu \leq \nu \qquad \text {iff} \qquad \int_{-d}^0 g(s) \mu(ds) \leq  \int_{-d}^0 g(s) \nu(ds)  \qquad  \forall g\geq 0,  g \in C([-d,0]; \mathbb{R})
\end{equation}
Denote by $\mathcal{M}_+$   the cone of  nonnegative measures in $\cal M$ and  by $\mathcal{M}_+^0$   the lattice subspace  $ \mathcal{M}_+ \cap \mathcal M^0$.

 The next Proposition can be proved via a variation of constants technique, exactly as  in  Proposition 2.6 of \cite{BGP}, where the measure $\phi$ is absolutely continuous with respect to the Lebesgue measure on $[-d,0]$. So, we skip the proof.

\begin{proposition}\label{pr:ypositive}
Let $y(t)=y^{x}(t; \phi)$ be the solution at time $t$ of the labor income SDDE in \eqref{DYNAMICSy}. Let \begin{eqnarray}\label{eq:E}
	E(t)&:=& e^{(\mu_y - \frac{1}{2} \sigma_y^\top \sigma_y) t + \sigma_y^\top Z(t)}\\
	I(t)&:= &\int_0^t     E^{-1}(u) \left(\int_{-d}^0y(u+s) \phi(ds)\right) du,
\label{eq:I}
\end{eqnarray}
then $y$ admits a feedback representation:
\begin{equation}\label{VARIATION_CONSTANTS_FORMULA}
y(t)= E(t)\big(x_0 + I(t)\big).
\end{equation}
 As a consequence, when $x_0>0$, $x_1 \in L^2_+([-d,0);\mathbb{R})$  and $\phi \in \mathcal{M}^0_+$, then $y^x(t)>0$ holds $P$-a.s., for all $t\ge 0$.  \end{proposition}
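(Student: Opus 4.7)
The plan is to prove the feedback identity by a variation-of-constants argument based on Itô's formula applied to the ratio $y/E$, and then to deduce positivity from that identity through a first hitting time argument.

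For the feedback formula, I would exploit the fact that $E(t)$ defined in \eqref{eq:E} solves the ``undelayed'' geometric Brownian SDE $dE(t)=\mu_y E(t)\,dt+E(t)\sigma_y^\top dZ(t)$. Applying Itô's formula one gets
\begin{equation*}
d\!\left(\frac{1}{E(t)}\right)=\frac{1}{E(t)}\Big((\sigma_y^\top\sigma_y-\mu_y)\,dt-\sigma_y^\top dZ(t)\Big).
\end{equation*}
Combining this with \eqref{DYNAMICSy} through the Itô product rule, and using the cross variation $d\langle y,1/E\rangle_t=-(y(t)/E(t))\sigma_y^\top\sigma_y\,dt$, produces exact cancellations of both the linear drift $y(t)\mu_y\,dt$ and the martingale part $y(t)\sigma_y^\top dZ(t)$ in \eqref{DYNAMICSy}. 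What survives is the forcing due to the delay term alone:
\begin{equation*}
d\!\left(\frac{y(t)}{E(t)}\right)=\frac{1}{E(t)}\left(\int_{-d}^{0}y(t+s)\,\phi(ds)\right)dt.
\end{equation*}
Integrating on $[0,t]$ gives $y(t)/E(t)=x_0+I(t)$, which is precisely \eqref{VARIATION_CONSTANTS_FORMULA}.

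For positivity, I would work on the $P$-full-measure event where $y$ has continuous paths (guaranteed by Proposition \ref{pr:statelabor}) and $E(t)>0$ for every $t\ge 0$, and introduce the first hitting time
\begin{equation*}
\tau:=\inf\{t\ge 0\,:\,y(t)\le 0\},\qquad\inf\emptyset:=+\infty.
\end{equation*}
For every $u\in[0,\tau)$, the function $s\mapsto y(u+s)$ is nonnegative on $[-d,0]$: for $s\in[-u,0]$ by the definition of $\tau$, and for $s\in[-d,-u)$ because $y(u+s)=x_1(u+s)\ge 0$. Since $\phi\in\mathcal{M}_+^0$, this forces $I(u)\ge 0$ on $[0,\tau)$. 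Passing to the limit $u\uparrow\tau$ by continuity, on the event $\{\tau<+\infty\}$ the feedback formula would give $y(\tau)=E(\tau)\big(x_0+I(\tau)\big)\ge E(\tau)x_0>0$, contradicting $y(\tau)\le 0$. Hence $\tau=+\infty$ $P$-almost surely and $y^x(t)>0$ for every $t\ge 0$.

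The main technical obstacle I expect to address is the meaningfulness of $\int_{-d}^{0}y(u+s)\,\phi(ds)$ for $u<d$: the integrand then involves $x_1$, which is specified only $ds$-almost everywhere, integrated against a general Radon measure $\phi$ that can charge Lebesgue-null sets. This is precisely the subtlety highlighted right after \eqref{DYNAMICSy}, and it is resolved by the convention fixed in Appendix B when constructing the strong solution. Once this convention is adopted, both the Itô computation and the sign comparison above are purely pathwise once the Brownian trajectory is frozen, and no further delicate estimate is required.
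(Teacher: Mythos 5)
Your proposal is correct and follows essentially the same route as the paper, which skips the proof and points to the variation-of-constants technique of Proposition 2.6 in \cite{BGP}: your It\^o computation for $y/E$ is exactly that technique, and the first-hitting-time deduction of positivity is the standard consequence of the representation. The only slight overstatement is calling the sign comparison ``purely pathwise'' for $u<d$, where the delay integral is defined only for a.e.\ $u$ through the $L^2$-extension of Appendix \ref{app:wellposed}; nonnegativity of that term follows by approximating $x_1$ with nonnegative continuous functions and using the continuity of the extended operator, as the paper itself does in the proof of Lemma \ref{Labor-MON}.
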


The last statement on the positivity of labor income process can partly be reversed in the sense that, if $\phi \not\in \mathcal{M}^0_+$, then for some positive initial datum the labor income  eventually  takes negative values.
This is the content of the following Proposition, whose proof is in Appendix \ref{app:lemmapos}.

\begin{proposition}
\label{neg}
Let $\phi$ be   in $\mathcal M^0$.  Then, $\phi \geq 0$ if and only if,  for all initial data $x_0>0$ and $x_1 \in L^2_+([-d,0);\mathbb{R})$, $y$ has positive paths $P$-a.s.
\end{proposition}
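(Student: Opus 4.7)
I would prove the nontrivial ``only if'' direction by contrapositive: assuming $\phi \in \mathcal M^0$ is not nonnegative, I exhibit initial data $(x_0, x_1)$ with $x_0 > 0$ and $x_1 \in L^2_+([-d,0);\mathbb R)$ for which $y$ takes negative values with positive probability. Let $\phi = \phi^+ - \phi^-$ be the Jordan decomposition; by hypothesis $\phi^- \neq 0$, and both $\phi^\pm$ inherit non-atomicity at $0$. Using inner and outer regularity of Radon measures together with Urysohn's lemma, I construct a continuous $g: [-d, 0] \to [0, 1]$ compactly supported in $[-d, -\eta]$ for some $\eta > 0$, such that
\[
C := -\int_{[-d,\,0]} g(s)\, \phi(ds) \; > \; 0.
\]
Concretely, pick a compact $K$ in the Hahn negative set of $\phi$ with $\phi^-(K)$ close to $\phi^-([-d,0])$, an open $V \supset K$ contained in $[-d,0)$ with $\phi^+(V)$ arbitrarily small, and let $g$ be a continuous bump equal to $1$ on $K$ and vanishing outside $V$.

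With this $g$, take initial data $(x_0, M g)$ for $x_0 > 0$ and $M > 0$ to be tuned; since $g$ is continuous and nonnegative, $Mg \in L^2_+([-d, 0); \mathbb R)$. By the variation of constants formula \eqref{VARIATION_CONSTANTS_FORMULA} and positivity of $E(t)$, one has $y(t) < 0$ if and only if $x_0 + I(t) < 0$, so it suffices to find a time at which the latter holds on a set of positive probability. I split the delay integrand as
\[
F(u) := \int_{[-d,0]} y(u+s)\,\phi(ds) \;=\; \underbrace{\int_{[-d,-u)} Mg(u+s)\,\phi(ds)}_{D(u)} \;+\; \underbrace{\int_{[-u,0]} y(u+s)\,\phi(ds)}_{R(u)}.
\]
The deterministic piece satisfies $D(u) \to -MC$ as $u \to 0^+$: uniform continuity of $g$ handles the translation $g(u+\cdot) \to g(\cdot)$, and non-atomicity of $\phi$ at $0$ kills the boundary contribution $\int_{[-u,0)} Mg(s)\,\phi(ds)$. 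The stochastic remainder obeys $|R(u)| \leq \sup_{v \in [0,u]}|y(v)| \cdot |\phi|([-u,0])$; although $\sup_{v \in [0,u]}|y(v)|$ can be of order $M$ under the large drift, it is beaten by the $o(1)$ factor $|\phi|([-u,0])$, again by non-atomicity at $0$.

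Quantifying: given $\epsilon > 0$, there exist $u^* > 0$ depending only on $(\phi, g, \epsilon)$ and an event $A$ with $P(A) \geq 1 - \epsilon$ on which $F(u) \leq -MC(1-\epsilon)$ and $E^{-1}(u) \geq 1-\epsilon$ for all $u \in [0, u^*]$; on $A$ we then have $I(t) \leq -MCt(1-\epsilon)^2$ for $t \in [0, u^*]$. Fixing any $x_0 > 0$ and then taking $M$ large enough that $2 x_0/(MC(1-\epsilon)^2) < u^*$, at $t := 2 x_0/(MC(1-\epsilon)^2)$ one obtains $x_0 + I(t) \leq -x_0 < 0$ on $A$, hence $y(t) < 0$ with probability at least $1 - \epsilon$, contradicting positivity of paths and closing the contrapositive. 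The main technical obstacle is the uniformity in $M$ of the small-time estimate on $R(u)$: since the drift is of order $-M$, $y$ itself can swing by $O(M)$ over $[0, u]$, so $\sup_{v \in [0,u]} |y(v)|$ is only $O(M)$; this forces me to close a short Gronwall-type bootstrap on $|y|$ using the variation of constants formula, and relies crucially on non-atomicity of $\phi$ at $0$ to supply an $o(1)$ factor that dominates this $O(M)$ growth once $u^*$ is picked in terms of $\phi$ and $g$ alone.
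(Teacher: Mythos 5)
Your plan is correct in outline and shares the paper's skeleton: the same auxiliary construction of a nonnegative continuous bump with strictly negative $\phi$-integral (the paper proves this in Appendix A via the support of $\phi^-$, shrinking open neighbourhoods and Urysohn-type functions; your version, with support away from $0$ and $\phi^+(V)$ small, is if anything more careful), the same use of the representation \eqref{VARIATION_CONSTANTS_FORMULA}, and the same idea of making the initial past dominate the present by a relative scaling --- you enlarge $x_1=Mg$ with $x_0$ fixed, the paper equivalently shrinks $x_0=cm/8$ with $x_1$ fixed. Where you genuinely diverge is the key step, namely showing that the delay integral stays comparably negative on a nondegenerate time interval. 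The paper avoids any quantitative control of $\sup|y|$ by introducing the stopping times $\tau_c$ (first time the delay integral exceeds $-m/4$) and $\rho$ (first time $E^{-1}<1/2$): on $[0,\tau_c\wedge\rho]$ the bound $I(t)<-\tfrac{m}{8}t$ is automatic from the definitions, and the only remaining issue is that these times do not collapse as the scaling parameter $c\to 0$, handled by monotonicity of $\tau_c$ in $c$ and $\varrho_{1/n}\uparrow\varrho_0>0$ a.s. Your route instead runs an explicit $\varepsilon$-quantitative small-time analysis: split $F=D+R$, use non-atomicity at $0$ to get $|\phi|([-u,0])=o(1)$, and close a Gronwall-type bootstrap against the $O(M)$ swing of $y$. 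This does close (on the high-probability event where $E^{\pm1}$ is near $1$, the bootstrap yields $\sup_{v\le u}|y(v)|\lesssim x_0+MCu$, and the $o(1)$ factor then forces $|R(u)|\le \varepsilon MC/2$ for $u\le u^*$, with at most a harmless dependence of $u^*$ on $x_0$ that can be absorbed into the choice of $M$), but it is analytically heavier than the stopping-time argument, whose price is instead the limit argument in $c$. Two minor points: what you prove is the contrapositive of ``positive paths for all data $\Rightarrow\ \phi\ge 0$'', i.e.\ the ``if'' direction (the ``only if'' direction is the easy one, already contained in Proposition \ref{pr:ypositive}), so your labelling is off though the content is the right, nontrivial implication; and you should state explicitly, as the paper does, that the easy implication follows directly from \eqref{VARIATION_CONSTANTS_FORMULA}.
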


\section{The optimization problem with a fixed Radon measure $\phi$}

 We now turn to the resolution of Problem \ref{pb:first} by the dynamic programming approach. As already anticipated,
the findings here are an extension of the setup in \cite{BGP} to the case of general measures $\phi\in \mathcal{M}^0$.
Whenever their results still go through with general $\phi$ without substantial changes,   we will simply refer to \cite{BGP} for the proof. In here, the use of a Radon measure  $\phi$  makes the adjoint $A_\phi^*$ of the crucial operator $A_\phi$ (defined below in \eqref{DEF_A}) more difficult to find and handle.  This is why after introducing the mathematical setup in Subsection \ref{sec3.1}, the Subsection \ref{sec:adjoint}  is entirely dedicated  to the computation of the adjoint $A_\phi^*$. Given that, the solution of the problem will follow   as in  \cite{BGP}.
\subsection{Mathematical setup}
\label{sec3.1}
The state equation for the labor income $y$ is a SDDE, hence $y$ is non Markovian.  In order to recover Markovianity and implement dynamic programming,  the authors in \cite{BGP}
extend the state, so to include the past path\footnote{On the extension, see also  \cite{VINTER},\cite{CHOJNOWSKA-MICHALIK_1978} or the books \cite[Section 0.2]{DAPRATO_ZABCZYK_RED_BOOK}\cite[Section 2.6.8]{FABBRI_GOZZI_SWIECH_BOOK}.}.
The extended state now lives in an infinite dimensional Hilbert space, the Delfour-Mitter space $M_2$
$$
M_2 := \mathbb R \times L^2\big( [-d, 0); \mathbb R \big),
$$
with inner product, for $x=(x_0, x_1), y=(y_0, y_1) \in M_2$, defined as
\begin{equation*}
\langle x,y \rangle_{M_2}:= x_0 y_0 + \langle x_1, y_1 \rangle_{L^2([-d,0); \bR)} = x_0 y_0 + \int_{-d}^0x_1(s)y_1(s)\, {\rm d}s.
\end{equation*}
For the sake of simplicity we will drop the subscript $L^2([-d,0);\bR)$ from the inner product in  $L^2([-d,0); \bR)$, writing simply $\langle x_1, y_1\rangle$.
More information on the Delfour-Mitter space can be found e.g. in the book \cite[Part II - Chapter 4]{BENSOUSSAN_DAPRATO_DELFOUR_MITTER}). \\
\indent The measure  $\phi \in \mathcal{M}^0$ is fixed.   The Sobolev space $W^{1,2}([-d,0];\mathbb{R})$ is the space of all $f \in L^2([-d,0);\mathbb{R})$ such that the weak derivative $Df$ is also square integrable\footnote{ Note that, differently from the notation we use for $L^2$ spaces on intervals (see, footnote 2 above), for $W^{1,2}$ spaces on intervals we include the extremes on the intervals (even if usually they are not) to underline that such functions can be taken well defined and continuous up to the boundary.}  $W^{1,2}([-d,0];\mathbb{R})$ is endowed with the norm $\| f\| = \| f \|_{ L^2([-d,0);\mathbb{R})}
+ \| Df \|_{ L^2([-d,0);\mathbb{R})}$ (see e.g. \cite[Chapter 8]{Brezis} for a simple introduction to Sobolev spaces).
\\
\indent
To embed the state $y$ of the original problem
in the space $M_2$, similarly to \cite{BGP} we introduce a linear (unbounded) operator $A_\phi: \mathcal D (A_\phi) \subset M_2 \rightarrow M_2$:
\begin{eqnarray}
A_\phi(x_0,x_1) := \left(\mu_y x_0 +\int_{-d}^0x_1(s)\, \phi(ds) ,\ { \frac{\partial}{\partial s} x_1}   \right), \label{DEF_A} \\
\mathcal D(A_\phi) :=\left\{(x_0,x_1) \in M_2: x_1(\cdot) \in
W^{1,2}\left( [-d, 0]; \mathbb R \right), x_0 = x_1(0)\right\} \nonumber,
\end{eqnarray}
\noindent where $\frac{\partial}{\partial s}x_1$ is the weak derivative on $[-d, 0)$ of $x_1$ while
$\mu_y,\phi$ are the constants appearing in the drift of the labor income dynamics equation. Consider also
 the linear (bounded) operator $C:M_2 \rightarrow  \mathbb R^n \times L^2([-d,0); \mathbb R)$ defined as
\begin{equation*}
C(x_0,x_1):= \left(x_0  \sigma_y , 0\right),
\end{equation*}
 where $\sigma_y $ is the diffusion coefficient of the labor income and $0$ is the null function in $L^2([-d,0); \mathbb R)$.
 Following e.g. \cite[Section 2.6.8]{FABBRI_GOZZI_SWIECH_BOOK}
the state equation \eqref{DYNAMICSy} can be formally rewritten  as follows. Given a solution $y$ to \eqref{DYNAMICSy},
for $t \ge 0$ and $s\in [-d,0)$ set
$X_0(t)=y(t)$ and $X_1(t)(s)=y(t+s)$
(so here $X_1:\R_+\to L^2([-d,0);\R)$.
Assume for the moment that $(X_0(t), X_1(t))\in\mathcal D(A_\phi)$ for all $t\ge 0$. Then, from \eqref{DYNAMICSy} we get
 $$
 \left \{
 \begin{split}
    d X_0(t) =&  \left(\mu_y X_0(t) +\int_{-d}^0X_1(t)(s)\, \phi(ds)\right ) dt + X_0(t)  \sigma_y^\top dZ(t) \\
    d X_1(t)(s) = & \, \frac{\partial}{\partial s} X_1(t)(s) dt
 \end{split} \right.
 $$
which can be written, using $A_\phi$, as
\begin{equation}\label{EQ:dX}
d X(t) = A_\phi X(t) dt + \big(C X(t) \big)^\top dZ_t
\end{equation}
Now, Proposition A.27 in \cite{DAPRATO_ZABCZYK_RED_BOOK} shows that $A_\phi$ generates a strongly continuous semigroup in $M_2$.
The findings in \cite{Flandoli90SICON} then ensure that for every initial datum $x=(x_0,x_1)\in M_2$ the equation  \eqref{EQ:dX} admits a unique mild solution
(see e.g.
\cite[Section 1.4.1]{FABBRI_GOZZI_SWIECH_BOOK} for the precise definition of mild solution) $X^{x}(\cdot;\phi)=(X_0^{x}(\cdot;\phi),X_1^{x}(\cdot;\phi))$.
Such solution  can be identified,
thanks to Theorem 3.9 and Remark 3.7 in \cite{CHOJNOWSKA-MICHALIK_1978}
with the solution $y^x(\cdot;\phi)$ of the  labor income equation \eqref{DYNAMICSy} in the sense that
$$
X^x(t;\phi)= \big(y^x(t;\phi), y^x (t+s;\phi)_{\mid s \in [-d,0)} \big),
\qquad \forall t \ge 0.
$$
Therefore, the complete state equations system  becomes
\begin{eqnarray}\label{DYN_W_X_INFINITE_RETIREMENT_II}
\begin{split}
\left\{\begin{array}{l}
dW(t) =  \left[ (r+\delta) W(t)+ \theta^\top(t) (\mu-r \mathbf 1)  +  X_0(t) - c(t) - \delta B(t) \right] dt  +  \theta^\top (t) \sigma dZ(t),\\
  dX(t) = A_\phi X(t) dt + \big(C X(t) \big)^\top dZ_t\\
W(0)= w,\\
X_0(0) = x_0,\quad \quad
 X_1(s) = x_1(s) \mbox{ for $s \in  [-d,0)$}
 \end{array}\right. \end{split}
\end{eqnarray}
Rewriting the state equations in this form is necessary to write  the associated infinite dimensional Hamilton-Jacobi-Bellman equation which allows to find the value function and optimal controls.\\
\indent The budget constraint \eqref{budget-constraint} can also  be written as a Markovian function of the state variables $(W, X)$. In fact,  in  \cite[Theorem 2.1]{BGPZ} the authors show that,
given any $x\in M_2$ and any Radon measure $\phi$ over $[-d,0]$,
\begin{equation}\label{eq:rewritingconstraint}
\xi^{-1}(t)\mathbb E\left( \int_t^{+\infty} \xi(u) y^x(u;\phi) du \Bigg\vert \mathcal F_t\right)
=  g^\phi_{\infty}X_0^x(t;\phi)+ \langle h^\phi_{\infty}, X_1^x(t;\phi) \rangle,
\qquad \forall t \ge 0,
\end{equation}
where the constant $g^\phi_{\infty}\in \mathbb R$ and the function
$h^\phi_{\infty}: [-d,0]\longrightarrow \mathbb R_+$ are defined as follows:
 \begin{equation}
 \label{DEF_g_infty_h_infty}
g^\phi_{\infty} := \dfrac{1}{\beta- \beta^\phi_{\infty} },
\qquad
h^\phi_{\infty} (s): = g^\phi_{\infty}
{ \displaystyle 
	\int_{-d}^s   e^{-(r+\delta)(s-\tau)} \phi (d\tau) ,}
\end{equation}
with $\beta$ and $\beta^\phi_\infty$ given by \eqref{DEF_BETA}.
 Therefore, for $(w,x)\in \R\times M_2$, $\phi\in \mathcal{M}^0$, $\pi\in \Pi^0$, the total wealth from \eqref{budget-constraint} can be written as
\begin{equation}\label{eq:GammaRewritten}
\Gamma^{w,x}(t;\phi,\pi)
= W^{w,x}(t;\phi,\pi) + g^\phi_{\infty}X_0^x(t;\phi)+
\langle h^\phi_{\infty}, X_1^x(t;\phi) \rangle \
\text{ for all }  t\geq 0.
\end{equation}
Setting $\mathcal{H}:= \mathbb{R} \times M_2$, if we define the linear function $G^{\phi}: \calh\to \R$ as
\begin{equation}\label{eq:Gdef}
G^\phi(w,x_0,x_1): = w + g^\phi_{\infty}x_0+
\langle h^\phi_{\infty}, x_1 \rangle,
\end{equation}
the total wealth becomes
\begin{equation}\label{eq:GammaG}
\Gamma^{w,x}(t;\phi,\pi)
=G^\phi(W^{w,x}(t;\phi,\pi),X_0^x(t;\phi),X_1^x(t;\phi)).
\end{equation}
and the  set $\Pi(w,x;\phi)$ of admissible controls from \eqref{DEF_PI_FIRST_DEFINITION} can be  rewritten as
\begin{equation}\label{DEF_PI_FIRST_DEFINITIONbis}
\Pi\left(w,x;\phi\right):= \Big\{ \pi \in \Pi^0
\mid
G^\phi(W^{w,x}(t;\phi,\pi),X_0^x(t;\phi),X_1^x(t;\phi))
 \geq 0\,\quad \forall t \geq 0\Big\}.
\end{equation}

Notice that, when $t=0$, the initial datum $(w,x)$ must belong to the half space
\begin{equation*}
\mathcal{H}^{\phi}_+:=\{(w,x) \in \mathcal{H} \ : G^\phi(w,x_0,x_1) \ge 0\}.
\end{equation*}
 Hence, the state constraint \eqref{budget-constraint} means that the state trajectory $(W(t),X(t))$ must remain in $\mathcal{H}^{\phi}_+$ at all times.
 Finally, set
\begin{equation*}
\mathcal{H}^{\phi}_{++}:=\{(w,x) \in \mathcal{H} \ : G^\phi(w,x_0,x_1) >0\}= \mathrm{Int} ( \mathcal{H}^{\phi}_+),
\end{equation*}
in which $\mathrm{Int(A)}$ denotes the interior part of a set $A$.

\subsection{The adjoint $A_\phi^*$}
\label{sec:adjoint}

 To solve the infinite dimensional HJB equation associated to our problem, we need an explicit representation of the adjoint operator $A_{\phi}^*$ of $A{_\phi}$.
In exhibiting $A_{\phi}^*$,
first of all  define the linear bounded operator $\bar L$ as follows:
\begin{equation}\label{eq:Lbardef}
\bar L:C([-d,0];\R)\to L^2([-d,0);\R), \qquad
(\bar Lz)(s)=\int_{-d}^{s}z(u-s)\phi(du)
\end{equation}
 Then, using a result of
\cite[Ch.4,\S 4.4]{BENSOUSSAN_DAPRATO_DELFOUR_MITTER}
(generalized in Appendix \ref{app:wellposed},
\gre{Lemma \ref{newlemma.},}
to the case when $\phi$ is a stochastic process),
one extends $\bar L$ to a linear bounded operator
$$
\bar L:L^2([-d,0);\R)\to L^2([-d,0);\R),
$$
still denoted in the same way. Then, use it to define the linear operator $F$:
\begin{equation}\label{eq:Fdef}
 F:M_2\to M_2;
\qquad F(x_0,x_1)=\left(x_0,\bar L x_1\right)
\end{equation}
Such $F$ is the so-called ``structural operator'' associated to the operator $A_{\phi}$.
 From \cite[Theorem 4.6, p. 269]{BENSOUSSAN_DAPRATO_DELFOUR_MITTER} one derives the adjoint as follows.
\begin{proposition}\label{pr:adjoint}
The adjoint  $A_{\phi}^*$ is a linear operator
$A_{\phi}^*: \mathcal D(A_{\phi}^*)\subset M_2 \longrightarrow M_2$
with domain
\begin{align}\label{EQ_DOMAINADJOINT_OPERATOR_A*}
\mathcal D(A_{\phi}^*):=\bigg\{& z \in M_2:\;\exists \xi \in D(A_\phi) \; and \;
\zeta \in W^{1,2}([-d, 0]; \mathbb R ),\, \zeta(-d) = 0,
\\
&   z=F\xi+(0,\zeta)\;\bigg\}
\end{align}
and
\begin{equation}\label{EQ_ADJOINT_OPERATOR_A*}
A_{\phi}^*z:= FA_{\phi}\xi + (\zeta(0),-\zeta')
=
\left(\mu_y \xi_0 +\int_{-d}^0\xi_1(s)\, \phi(ds)+\zeta(0),
(\bar L\xi_1') -  \zeta^{\prime}  \right)
\end{equation}
\end{proposition}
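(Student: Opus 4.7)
The plan is to verify the formula directly from the definition of the Hilbert space adjoint, reproducing the argument of \cite[Theorem 4.6, p.~269]{BENSOUSSAN_DAPRATO_DELFOUR_MITTER} in our Radon-measure setting. For $y = (y_0, y_1) \in \mathcal{D}(A_\phi)$ (so $y_1 \in W^{1,2}([-d,0];\mathbb{R})$ with $y_1(0) = y_0$) and a candidate $z = F\xi + (0,\zeta) \in M_2$ with $\xi \in \mathcal{D}(A_\phi)$ and $\zeta \in W^{1,2}([-d,0];\mathbb{R})$ satisfying $\zeta(-d) = 0$, I would expand $\langle A_\phi y, z\rangle_{M_2}$ and match it term by term with $\langle y, A_\phi^* z\rangle_{M_2}$ using the proposed formula \eqref{EQ_ADJOINT_OPERATOR_A*}.

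The $\zeta$-contribution $\int_{-d}^0 \zeta(s) y_1'(s)\,ds$ is handled by a standard one-variable integration by parts: the boundary term at $s=-d$ vanishes by $\zeta(-d)=0$, while the one at $s=0$ yields $\zeta(0)y_0$ via $y_1(0)=y_0$, producing exactly the $\zeta(0)$ entry in the first coordinate and the $-\zeta'$ entry in the second coordinate of $A_\phi^* z$. The crux is then the coupling identity
\begin{equation*}
\xi_0 \int_{-d}^0 y_1\,d\phi + \int_{-d}^0 (\bar L \xi_1)(s)\, y_1'(s)\,ds = y_0 \int_{-d}^0 \xi_1\,d\phi + \int_{-d}^0 y_1(s)(\bar L \xi_1')(s)\,ds,
\end{equation*}
which encodes the fact that $F$ intertwines $A_\phi$ with its adjoint. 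I would derive this by unfolding \eqref{eq:Lbardef}, applying Fubini to exchange the $ds$-integral with the $\phi(du)$-integral, performing the change of variable $v = u - s$, and then integrating by parts in $v$; the resulting boundary terms collapse to $\xi_0\int y_1\,d\phi$ and $-y_0\int \xi_1\,d\phi$ using $\xi_1(0)=\xi_0$ and $y_1(0)=y_0$, while the leftover integral reassembles into $\int_{-d}^0(\bar L \xi_1)(s) y_1'(s)\,ds$ after reversing the substitution.

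The main obstacle is justifying these manipulations when $\phi$ is only a Radon measure and $\xi_1'$ lies only in $L^2$, since the pointwise formula \eqref{eq:Lbardef} for $\bar L$ is a priori meaningful only on $C([-d,0])$. I would circumvent this by first running the computation for $\xi_1 \in C^1([-d,0])$ with $\xi_1(0)=\xi_0$, where both Fubini and the classical integration by parts are immediate, and then extending by density using the $L^2$-boundedness of $\bar L$ recalled from Lemma \ref{newlemma.} together with the density of such smooth functions in the graph-norm topology of $\mathcal{D}(A_\phi)$. Finally, for the sharp characterization that $\mathcal{D}(A_\phi^*)$ equals the prescribed set, I would argue conversely: boundedness of $y\mapsto \langle A_\phi y, z\rangle_{M_2}$ on $\mathcal{D}(A_\phi)$ forces $z_1$ to admit a decomposition $z_1 = \bar L \xi_1 + \zeta$ with $\zeta \in W^{1,2}([-d,0];\mathbb{R})$ and $\zeta(-d)=0$, since any failure of this structure would create an unbounded contribution through the derivative term of $A_\phi y$.
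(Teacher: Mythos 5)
Your route is genuinely different from the paper's: the paper does not verify the adjoint formula directly, but derives it from the structural-operator theory of delay systems by invoking \cite[Theorem 4.6, p.~269]{BENSOUSSAN_DAPRATO_DELFOUR_MITTER}, once $\bar L$ has been extended to an $L^2$-bounded operator (Lemma \ref{newlemma.}); that extension is the only ingredient the paper adds, and you use it too. Your direct verification of the inclusion $F\xi+(0,\zeta)\in\mathcal D(A_\phi^*)$ together with formula \eqref{EQ_ADJOINT_OPERATOR_A*} is correct: the coupling identity you isolate is exactly the right one, and it follows from Fubini (legitimate because $\phi$ is a finite measure, $y_1,\xi_1$ are continuous and $y_1',\xi_1'$ integrable), the change of variable $v=u-s$, and one-dimensional integration by parts for absolutely continuous functions, with the boundary terms matched through $\xi_1(0)=\xi_0$ and $y_1(0)=y_0$; the smoothing-and-density step for $\xi_1'\in L^2$ is fine (and in fact optional, since Fubini already applies). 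What your self-contained argument buys is independence from the BDDM machinery in the measure-valued setting; what the paper's citation buys is brevity and, importantly, the exact identification of the domain rather than one inclusion.

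That identification is the one place where your sketch is thin. ``Boundedness forces the decomposition, otherwise the derivative term gives an unbounded contribution'' is not yet an argument: for $z\in\mathcal D(A_\phi^*)$ the functional $y\mapsto\langle A_\phi y,z\rangle_{M_2}$ is bounded by definition, and the term $y_1\mapsto z_0\int_{-d}^0 y_1\,d\phi$ is sup-norm bounded but not $L^2$-bounded when $\phi\not\ll ds$, so the structure must be extracted from the representation, not contradicted. A clean completion along your own lines: pick any $\xi\in\mathcal D(A_\phi)$ with $\xi_0=z_0$ (e.g. $\xi_1\equiv z_0$); by your first step $F\xi\in\mathcal D(A_\phi^*)$, hence $(0,\eta):=z-F\xi\in\mathcal D(A_\phi^*)$, and writing $v=(v_0,v_1)=A_\phi^*(0,\eta)$ the adjoint relation reads $\int_{-d}^0 y_1'(s)\eta(s)\,ds=y_1(0)v_0+\int_{-d}^0 y_1(s)v_1(s)\,ds$ for all $y_1\in W^{1,2}([-d,0];\R)$. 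Testing with $y_1\in C_c^\infty((-d,0))$ identifies the distributional derivative $\eta'=-v_1\in L^2$, so $\eta\in W^{1,2}([-d,0];\R)$; testing then with $y_1$ having prescribed boundary values gives $\eta(-d)=0$ (and $\eta(0)=v_0$), so $\zeta:=\eta$ realizes $z=F\xi+(0,\zeta)$ as in \eqref{EQ_DOMAINADJOINT_OPERATOR_A*}. With that step inserted, your proof is complete and yields both \eqref{EQ_DOMAINADJOINT_OPERATOR_A*} and \eqref{EQ_ADJOINT_OPERATOR_A*}.
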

Recall that the couple
$(g_{\infty}, h_{\infty}) \in M_2 $  is:
 \begin{eqnarray}\label{DEF_g_infty_h_infty}
 \left\{
\begin{array}{ll}
g_{\infty} &:= \dfrac{1}{\beta- \beta_{\infty} },\\
\\
h_{\infty} (s)&: = g_{\infty}    { \displaystyle 
	\int_{-d}^s   e^{-(r+\delta)(s-\tau)} \phi (d \tau),}
\end{array}
\right.
\end{eqnarray}
with $\beta $ and $\beta_{\infty}$ defined in (\ref{DEF_BETA}).
\begin{lemma}\label{LEMMA_DIFF_EQ_G_INFTY_H_INFTY}
The couple  $(g_{\infty},h_{\infty}) \in \mathcal D(A_{\phi}^*)$ and
\begin{equation}\label{eq:A*gh}
A_{\phi}^*(g_{\infty},h_{\infty})=
\left(\mu_y g_\infty+g_\infty \int_{-d}^{0}e^{(r+\delta)s}\phi(ds),
(r+\delta)h_\infty
\right).
\end{equation}
\end{lemma}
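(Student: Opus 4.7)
The plan is to exhibit an explicit decomposition $(g_\infty, h_\infty) = F\xi + (0,\zeta)$ with $\xi \in D(A_\phi)$ and $\zeta \in W^{1,2}([-d,0];\mathbb{R})$ satisfying $\zeta(-d)=0$, then compute $A_\phi^*(g_\infty, h_\infty)$ directly from formula \eqref{EQ_ADJOINT_OPERATOR_A*}. The candidate is
\[
\xi_0 := g_\infty, \qquad \xi_1(u) := g_\infty \, e^{(r+\delta) u} \quad \text{for } u \in [-d,0], \qquad \zeta \equiv 0.
\]
The motivation comes from reading the definition of $h_\infty$ in \eqref{DEF_g_infty_h_infty} as the image of such an exponential under $\bar L$: for this specific $\xi_1$, one has $\xi_1(u-s) = g_\infty e^{(r+\delta)(u-s)}$, so
\[
(\bar L \xi_1)(s) = \int_{-d}^s g_\infty e^{(r+\delta)(u-s)}\, \phi(du) = g_\infty \int_{-d}^s e^{-(r+\delta)(s-u)}\, \phi(du) = h_\infty(s).
\]

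Next I would verify admissibility of the decomposition. Since $\xi_1$ is smooth on $[-d,0]$, it belongs to $W^{1,2}([-d,0];\mathbb{R})$, and $\xi_1(0) = g_\infty = \xi_0$, so $\xi = (\xi_0,\xi_1) \in \mathcal{D}(A_\phi)$. The choice $\zeta \equiv 0$ trivially lies in $W^{1,2}$ and vanishes at $-d$. By construction $F\xi + (0,\zeta) = (\xi_0, \bar L\xi_1) = (g_\infty, h_\infty)$, so $(g_\infty, h_\infty) \in \mathcal{D}(A_\phi^*)$ by Proposition \ref{pr:adjoint}.

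For the evaluation of $A_\phi^*(g_\infty,h_\infty)$, apply formula \eqref{EQ_ADJOINT_OPERATOR_A*}: the first component is
\[
\mu_y \xi_0 + \int_{-d}^0 \xi_1(s)\,\phi(ds) + \zeta(0) = \mu_y g_\infty + g_\infty \int_{-d}^0 e^{(r+\delta)s}\, \phi(ds),
\]
which matches the first entry in \eqref{eq:A*gh}. For the second component, compute $\xi_1'(s) = g_\infty(r+\delta) e^{(r+\delta)s}$ and apply $\bar L$: by the same computation that gave $\bar L \xi_1 = h_\infty$, one obtains $(\bar L \xi_1')(s) = (r+\delta)\, h_\infty(s)$. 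Since $\zeta' \equiv 0$, the full second component is $(r+\delta)h_\infty$, as claimed.

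The only nontrivial step is the guess of $\xi_1$; once one notices that $h_\infty$ is itself of the form $\bar L$ of an exponential, the rest is direct verification, so I do not foresee an analytical obstacle. A minor point to keep in mind is that $\bar L$ is defined first on $C([-d,0])$ and then extended to $L^2$; because our candidate $\xi_1$ is continuous, the pointwise formula above is legitimate and no density argument is required.
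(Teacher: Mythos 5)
Your proposal is correct and follows essentially the same route as the paper: the same choice $\xi_0=g_\infty$, $\xi_1(u)=g_\infty e^{(r+\delta)u}$, $\zeta\equiv 0$, membership in $\mathcal D(A_\phi^*)$ via $F(\mathcal D(A_\phi))$, and the same direct evaluation of \eqref{EQ_ADJOINT_OPERATOR_A*} using $\bar L\xi_1'=(r+\delta)h_\infty$. No gaps; your remark that no density argument is needed since $\xi_1$ is continuous is a fair observation the paper leaves implicit.
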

\begin{proof}
By Proposition \ref{pr:adjoint},
we need to find
$\xi \in \cald(A_{\phi})$ and $\zeta \in W^{1,2}([-d, 0]; \mathbb R )$ with $\zeta(-d) = 0$,
such that
\begin{equation}\label{eq:xizeta}
(g_\infty,h_\infty)=F\xi+(0,\zeta).
\end{equation}
This implies immediately that it must be $g_\infty=(F\xi)_0=\xi_0$.
and, by simple computations, that the couple
$$
\xi_1(u):=g_\infty e^{(r + \delta)u}, \qquad \zeta(u) =0, \qquad \forall u\in [-d,0]
$$
satisfies \eqref{eq:xizeta}. In particular,
$$
(g_{\infty},h_{\infty}) \in F(\cald(A_{\phi})) \subseteq \mathcal D(A_{\phi}^*).
$$
Then, by \eqref{EQ_ADJOINT_OPERATOR_A*},
\begin{equation}\label{eq:A*ghbis}
A_{\phi}^*(g_{\infty},h_{\infty})= FA_{\phi}\xi=
\left(\mu_y \xi_0+\int_{-d}^{0}\xi_1(s)\phi(ds),\bar L (A_{\phi}\xi)_1\right).
\end{equation}
Since
$$
(A_{\phi}\xi)_1(u)=\xi_1^\prime=g_\infty(r+\delta) e^{(r + \delta)u}
$$
we get, by \eqref{eq:Lbardef},
$$
\bar L (A_{\phi}\xi)_1(\cdot)=g_\infty(r+\delta) \int_{-d}^{\cdot} e^{(r + \delta)(u-\cdot)}\phi(du)
=(r+\delta) h_\infty(\cdot)
$$
which concludes the proof.
\end{proof}

\subsection{The HJB equation and its explicit solution}
\label{SSE:HJBEXPLICIT}

\begin{notation}
Let $p= (p_1,p_2)$
be a generic vector of $\calh=\mathbb R \times M_2$,
and let $S(2)$ denote the space of real symmetric matrices of dimension $2$, and
$P$ an element of $S(2)$, with
\begin{equation*}\label{NOTATIONS_P}
P=\left(\begin{array}{cc}
P_{11}&  P_{12}  \\
 P_{21}  &  P_{22}
\end{array} \right).
\end{equation*}
For any given function $u: \calh \longrightarrow \mathbb R$,
we denote by $Du = \left(u_w,u_{x}\right)= \left(u_w,(u_{x_0},u_{x_1})\right)\in \calh$
its gradient and by
$$D^2_{wx_0} u = \left(\begin{array}{cc}
 u_{ww}&  u_{wx_0}  \\
 u_{x_0 w}  &  u_{x_0 x_0}
\end{array} \right)\in S(2)
$$
its second derivatives with respect to the first two components $(w,x_0)$,
whenever they exist and the mixed derivatives coincide.
\end{notation}

The Hamiltonian
$\mathbb H: \mathbb R \times M_2 \times  (\mathbb R \times \cald (A_{\phi}^*))   \times S(2) \longrightarrow  [-\infty, \infty]$  is defined as follows
\begin{align}\label{DEF_HAMILTONIAN_INF_RETnew}
&\mathbb H(w,x,p,P)  :=
\\
\notag
&\sup_{(c,B,\theta)\in \R_+\times \R_+\times \mathbb R^n}
\left\{\left \langle \Delta(w,x,\theta,c,B),p \right \rangle_{\R\times M_2}+ \frac12 Tr \left (\Sigma(\theta,x_0) P \Sigma^*(\theta,x_0) \right ) + U(c,B)\right\}
\end{align}
where $\Delta(w,x,\theta,c,B)$ and $\Sigma(\theta,x_0)$  are the drift and the (reduced) diffusion coefficients  of the infinite dimensional system  \eqref{DYN_W_X_INFINITE_RETIREMENT_II}, while $U$
is the instantaneous utility function in \eqref{gaininfty}.
The HJB equation  associated with the optimization Problem \ref{pb:first} is
\begin{equation} \label{eq:HJB1}
(\rho+\delta) v = \mathbb H\left(w,x,D v, D^2_{wx_0} v\right),
\end{equation}
To compute the Hamiltonian we separate the part depending on the controls
from the rest, which can be taken out of the supremum.  \\

When $x\in D(A_\phi)$, we have
\begin{align}\label{DEF_HAMILTONIAN_INF_RET}
\mathbb H(w,x,p,P)  :=
\mathbb H_1(w,x,p,P_{22}) + \mathbb H_{max}(x_0,p_1,P_{11},P_{12}),
\end{align}
where
\begin{equation}\label{DEF_H_1_INF_RET}
\mathbb H_1(w,x,p,P_{22}) :=  (r+\delta) w p_1+x_0 p_1 + \langle A_{\phi}x,p_2  \rangle_{M_2}+
\frac{1}{2}   \sigma_y^\top  \sigma_y x_0^2 P_{22},
\end{equation}
and
\begin{equation}\label{DEF_H_MAX_INF_RET}
 \mathbb H_{max}(x_0,p_1,P_{11},P_{12}) := \sup_{(c,B,\theta)\in \R_+\times \R_+ \times \mathbb R^n} \mathbb H_{cv}(x_0,p_1,P_{11},P_{12}; c,B,\theta)
\end{equation}

\begin{align}\label{DEF_H_cv_INF_RET}
&\mathbb H_{cv}
(x_0,p_1,P_{11},P_{12}; c,B,\theta):=
\\
&=\frac{ c^{1-\gamma}}{1-\gamma}
+ \frac{\delta \big(k B\big)^{1-\gamma}}{1-\gamma}
+[\theta^\top(\mu-r\mathbf 1) - c-\delta B]p_1
+\frac{1}{2} \theta^\top  \sigma  \sigma^\top   \theta P_{11}  +  \theta^\top  \sigma \sigma_y x_0   P_{12}
\nonumber
\\
\nonumber
= &
\frac{ c^{1-\gamma}}{1-\gamma}- c p_1
+ \frac{\delta \big(k B\big)^{1-\gamma}}{1-\gamma} -\delta B p_1
+\theta^\top(\mu-r\mathbf 1)p_1
+\frac{1}{2} \theta^\top  \sigma  \sigma^\top   \theta P_{11}  +  \theta^\top  \sigma \sigma_y x_0   P_{12}.
\end{align}

Now note that, thanks to the last equality above, whenever $p_1>0$ and $P_{11}<0$,
the maximum in (\ref{DEF_H_MAX_INF_RET}) is achieved at
\begin{eqnarray}\label{MAX_POINTS_HAMILTONIAN__INF_RET}
\left\{
\begin{split}
c^{*} &:=  
p_1^{-\frac{1}{\gamma}}  ,  \\
B^{*}&:=k^{ -b} p_1^{-\frac{1}{\gamma}}   , \\
\theta^{*} &:=   - (\sigma\sigma^\top)^{-1} \frac{(\mu-r\mathbf 1) p_1 + \sigma  \sigma_y x_0  P_{12}}
{P_{11}},
\end{split}
\right.
\end{eqnarray}
where
\begin{equation}\label{DEB_b}
b= 1-\frac{1}{\gamma}.
\end{equation}
Hence, for $p_1>0$ and $P_{11}<0$ we have, by simple computations,
\begin{eqnarray}\label{EQ_EXPL_HAMILTONIAN_INF_RET}
\begin{split}
&\mathbb H(w,x,p,P)=
[(r+\delta) w +x_0 ]p_1+\langle A_{\phi}x,p_2  \rangle_{M_2}
+\frac{\gamma}{1-\gamma}  p_1^{b}
\big(
1+\delta k^{-b}  \big)
+\frac{1}{2}   \sigma_y^\top  \sigma_y x_0^2 P_{22}\\
&\qquad -\frac{1}{2P_{11}}
\left[(\mu-r \mathbf 1)  p_1+ \sigma \sigma_y x_0 P_{12}\right]^\top
(\sigma\sigma^\top)^{-1}
\left[(\mu-r \mathbf 1) p_1+ \sigma \sigma_y  x_0 P_{12}\right].
\end{split}
\end{eqnarray}
Therefore, if the unknown $v$ satisfies $v_w>0$ and $v_{ww}< 0$,
the HJB equation in (\ref{eq:HJB1}) reads
\begin{eqnarray}\label{HJB_CLEAN_INF_RET}
\begin{split}
(\rho+\delta) v&=
 [(r+\delta) w +x_0]  v_w +\langle A_{\phi}x,v_x  \rangle_{M_2}+
\frac{\gamma}{1-\gamma}  v_w^{b} \big(1 
+\delta k^{-b}  \big)
+\frac{1}{2}   \sigma_y^\top  \sigma_y x_0^2 v_{x_0x_0}\\
&-\frac{1}{2v_{ww}}
\left[(\mu-r \mathbf 1)  v_w+ \sigma \sigma_y x_0 v_{w x_0}\right]^\top
(\sigma\sigma^\top)^{-1}
\left[(\mu-r \mathbf 1) v_w+ \sigma \sigma_y  x_0 v_{w x_0}\right].
\end{split}
\end{eqnarray}
On the other hand we must also note that, when $p_1<0$ or $P_{11}>0$,
the Hamiltonian $\mathbb H$ is $+\infty$, while, when $p_1P_{11}=0$,
different cases may arise depending on $\gamma$ and on the sign of other terms.\\

\medskip

The Hamiltonian  specification above, and hence the HJB equation in
\eqref{HJB_CLEAN_INF_RET}, makes sense only for $x\in D(A_{\phi})$. In order to write the HJB equation for more general states as those obtained from an SDE evolution, we use the adjoint $A^*_\phi$.  In fact, if  $p_2$ in \eqref{EQ_EXPL_HAMILTONIAN_INF_RET}
(or $v_x$ in \eqref{HJB_CLEAN_INF_RET}) belong to $D(A_{\phi}^*)$, the adjoint properties  imply that the Hamiltonian and the  HJB equation become
\begin{eqnarray}\label{EQ_EXPL_HAMILTONIAN_INF_RETbis}
\begin{split}
\mathbb H(w,x,p,P)=
&[(r+\delta) w+x_0] p_1+\langle x,A_{\phi}^*p_2  \rangle_{M_2}
+\frac{\gamma}{1-\gamma}  p_1^{b} \big(
1+\delta k^{-b}  \big)\\
&+\frac{1}{2}   \sigma_y^\top  \sigma_y x_0^2 P_{22}\\
&-\frac{1}{2P_{11}}
\left[(\mu-r \mathbf 1)  p_1+ \sigma \sigma_y x_0 P_{12}\right]^\top
(\sigma\sigma^\top)^{-1}
\left[(\mu-r \mathbf 1) p_1+ \sigma \sigma_y  x_0 P_{12}\right].
\end{split}
\end{eqnarray}
\begin{eqnarray}\label{HJB_CLEAN_INF_RETbis}
\begin{split}
(\rho+\delta) v=
&  [(r+\delta) w+x_0]  v_w +\langle x,A_{\phi}^*v_x  \rangle_{M_2}+
\frac{\gamma}{1-\gamma}  v_w^{b} \big(1 
+\delta k^{-b}  \big)\\
&+\frac{1}{2}   \sigma_y^\top  \sigma_y x_0^2 v_{x_0x_0}\\
&-\frac{1}{2v_{ww}}
\left[(\mu-r \mathbf 1)  v_w+ \sigma \sigma_y x_0 v_{w x_0}\right]^\top
(\sigma\sigma^\top)^{-1}
\left[(\mu-r \mathbf 1) v_w+ \sigma \sigma_y  x_0 v_{w x_0}\right].
\end{split}
\end{eqnarray}

The next definition  is the same as in  \cite{BGP}.
\begin{definition}\label{DEF_SUPERSOLUTION_INF_RET}
A function $ u : \calh^{\phi}_{++} \longrightarrow \mathbb R $
is a \emph{classical solution}
of the HJB equation (\ref{eq:HJB1}) in $\calh^{\phi}_{++}$
if the following holds:
\begin{itemize}
  \item[(i)] $u$ is continuously Fr\'{e}chet differentiable in $\calh^{\phi}_{++}$ and  admits
      continuous second derivatives with respect to $(w,x_0)$ in $\calh^{\phi}_{++}$;
  \item[(ii)] $u_x(w,x) \in \cald(A_{\phi}^*)$ for every $(w,x)\in \calh^{\phi}_{++}$
  and $A^*_{\phi}u_x$ is continuous in $\calh^{\phi}_{++}$;
  \item[(iii)]
for all $(w,x) \in \calh^{\phi}_{++} $ we have
\begin{equation}\label{EQ_SOLUTION_INF_RET}
\begin{split}
(\rho + \delta) u -
\mathbb H \big(w,x,Du, D^2_{wx_0} u\big)= 0.
\end{split}
\end{equation}
\end{itemize}
\end{definition}



\begin{proposition}\label{PROP_COMPARISON_FINITENESS_VAL_FUN_INF_RET}
Define, for $(w,x) \in \calh^{\phi}_{++}$,
\begin{equation}\label{EQ_GUESS_BAR V_INF_RET}
\bar v(w,x):=
\frac{f_{\infty}^{\gamma}(G^{\phi}(w,x))^{1-\gamma}}{1-\gamma},
\end{equation}
with $f_{\infty}>0$
defined as
\begin{equation}\label{EXPRESSIONS_f_INF_RET}
 f_{\infty}:= (1 + \delta k^{-b}) \nu,
 \end{equation}
where
\begin{equation}\label{DEF_nu}
\nu  := \frac{\gamma}{\rho + \delta -(1-\gamma) (r + \delta +\frac{\kappa^\top \kappa}{2\gamma })}>0,
\end{equation}
$G^{\phi}$ defined in \eqref{eq:Gdef},
and $b$ as in (\ref{DEB_b}).
Then, $\bar v$ is a classical solution of the HJB equation
(\ref{eq:HJB1}) on $\calh^\phi_{++}$.

\end{proposition}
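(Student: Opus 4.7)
The plan is to verify the three requirements of Definition~\ref{DEF_SUPERSOLUTION_INF_RET} directly for the candidate $\bar v$. Introducing the shorthand $G:=G^\phi(w,x_0,x_1)$, I would differentiate: since $\bar v=\frac{f_\infty^\gamma}{1-\gamma}G^{1-\gamma}$ and $G$ is linear in $(w,x_0,x_1)$, the gradient and the relevant second derivatives are immediate, each equal to a scalar negative power of $G$ times one of the constants $1$, $g_\infty$, $h_\infty$, or $g_\infty^2$. These expressions are continuous on the open set $\calh^\phi_{++}=\{G>0\}$, which gives (i). Since $f_\infty>0$ under Assumption~\ref{HYP_BETA-BETA_INFTY}(ii), one also obtains $\bar v_w>0$ and $\bar v_{ww}<0$ on $\calh^\phi_{++}$; this legitimizes the use of the explicit form of the Hamiltonian, so the HJB equation that we must verify is precisely \eqref{HJB_CLEAN_INF_RETbis}.

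For (ii), I would observe that $\bar v_x=f_\infty^\gamma G^{-\gamma}(g_\infty,h_\infty)$ is a scalar multiple of $(g_\infty,h_\infty)$. Lemma~\ref{LEMMA_DIFF_EQ_G_INFTY_H_INFTY} certifies that $(g_\infty,h_\infty)\in\mathcal D(A_\phi^*)$ and supplies the explicit formula for $A_\phi^*(g_\infty,h_\infty)$; by linearity of $A_\phi^*$ and continuity of the prefactor $f_\infty^\gamma G^{-\gamma}$ on $\calh^\phi_{++}$, this yields $\bar v_x\in\mathcal D(A_\phi^*)$ together with continuity of $A_\phi^*\bar v_x$.

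The substantive step is (iii). I would substitute into \eqref{HJB_CLEAN_INF_RETbis}, factor out the common prefactor $f_\infty^\gamma G^{-\gamma}$, and treat three groups separately: the linear drift $[(r+\delta)w+x_0]\bar v_w+\langle x,A_\phi^*\bar v_x\rangle$; the consumption--bequest term $\tfrac{\gamma}{1-\gamma}\bar v_w^b(1+\delta k^{-b})$; and the second-order terms in $\sigma_y$ together with the quadratic hedging correction. The key algebraic identity
\begin{equation*}
1+g_\infty(\mu_y+\beta_\infty^\phi)=g_\infty(r+\delta+\sigma_y^\top\kappa),
\end{equation*}
which follows from $g_\infty=1/(\beta-\beta_\infty^\phi)$ combined with $\beta=r+\delta-\mu_y+\sigma_y^\top\kappa$, collapses the linear-drift group into $(r+\delta)G+g_\infty x_0\sigma_y^\top\kappa$ (times the prefactor). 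Expanding the hedging correction with the aid of $\sigma^{-1}(\mu-r\mathbf 1)=\kappa$ and $\sigma^\top(\sigma\sigma^\top)^{-1}\sigma=I$ produces three pieces, proportional respectively to $\kappa^\top\kappa$, $\kappa^\top\sigma_y\,x_0$, and $\sigma_y^\top\sigma_y\,x_0^2$.

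The main obstacle, and the step that genuinely requires the ansatz to have exactly the form \eqref{EQ_GUESS_BAR V_INF_RET}, is the cancellation of all $x_0$-dependent contributions that appear along the way. I expect the cross term in the expanded hedging correction to cancel exactly the spurious $g_\infty x_0\sigma_y^\top\kappa$ produced in the drift group, and the quadratic $\sigma_y^\top\sigma_y\,x_0^2$ piece of the hedging correction to cancel exactly the second-order contribution $\tfrac12\sigma_y^\top\sigma_y\,x_0^2\bar v_{x_0 x_0}$. After these two cancellations only terms proportional to $G$ survive, and using $\gamma b=\gamma-1$ (so that $\bar v_w^b=f_\infty^{\gamma-1}G^{1-\gamma}$) together with $f_\infty=(1+\delta k^{-b})\nu$, the HJB equation reduces to a scalar identity that is exactly the defining equation \eqref{DEF_nu} of $\nu$. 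This verifies (iii) and concludes the proof.
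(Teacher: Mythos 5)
Your proof is correct, and it follows essentially the same route as the paper: the paper's proof consists of noting that the only new ingredient relative to \cite[Lemma 3.2]{BGP} is the action of $A_\phi^*$ on $(g_\infty,h_\infty)$ established in Lemma \ref{LEMMA_DIFF_EQ_G_INFTY_H_INFTY}, and then performing exactly the direct verification you carry out (your cancellations of the $x_0$-terms and the reduction to the scalar identity defining $\nu$ all check out). The only difference is that you write the substitution into \eqref{HJB_CLEAN_INF_RETbis} explicitly, whereas the paper delegates these computations to the cited reference.
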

\begin{proof}
The operator $A^*_{\phi}$ acts on $(g_{\infty}, h_{\infty})$ in the same way as the operator $A^*$ considered in  \cite[Lemma 3.2]{BGP}). Mutatis mutandis, the proof follows the same lines of the cited reference.
 \end{proof}

\begin{remark}\label{rm:vonboundary}
The function $\bar v$ can be defined also in $\calh_+$ by setting,
on  its frontier  $\partial\calh_+=\{G^{\phi}(w,x)=0\}$,
$$\bar v(w,x)=0, \quad \hbox{when $\gamma\in (0,1)$}:
\qquad and \qquad
\bar v(w,x)=-\infty, \quad \hbox{when $\gamma\in (1,+\infty)$}.
$$
From now on we will consider $\bar v$ defined on $\calh_+$.
\end{remark}

\begin{remark}\label{rm:onregofHJB}
 Observe that for infinite dimensional HJB equations like
\eqref{EQ_SOLUTION_INF_RET} there are no available results on existence/uniqueness of classical solutions.
 On one side, it could be feasible to adapt to this case known results on existence/uniqueness of viscosity solutions like the ones of \cite[Chapter 3]{FABBRI_GOZZI_SWIECH_BOOK} and, in the so-called path-dependent PDEs setting, of
\cite{CFGRT,RenRosest}. On the other side, regularity results are far from  being  available. To our knowledge, the only regularity result which applies to a similar family of second order HJB equations is the one of \cite{RosestolatoSwiech17}
which only proves partial regularity, i.e. the derivative $\frac{\partial}{\partial x_0}$ (in the so-called ''present'' direction) is well defined and continuous.
\end{remark}

\subsection{The optimal controls}

The following result provides the solution of the
optimization problem without robustness.

\begin{theorem}\label{TEO_MAIN_INF_RET}
 Let $(w,x)\in\mathcal{H}^\phi_+$.
The value function $V$ equals
	\begin{equation}
\label{barV}
	 V(w,x;\phi) =   \frac{ f_{\infty}^{\gamma} \left(
		G^{\phi}(w,x)\right)^{1-\gamma} }
	{1-\gamma},
	\end{equation}
where the constant $ f_{\infty}>0$ is defined as
$$ f_{\infty} = (1+\delta k^{-b})\nu, $$
where
$$ b= 1-\frac{1}{\gamma}  \ \ ; \   \  \nu = \frac{\gamma}{\rho + \delta -(1-\gamma)(r+\delta + \frac{\kappa^T\kappa}{2\gamma}) } >0.$$
 The optimal total wealth, starting at $(w,x)$ is given by
\begin{equation}\label{Gamma-infty2}
(\Gamma^{w,x})^*(t;\phi): =
 G^\phi \left((W^{w,x})^*(t;\phi),X^x(t;\phi)\right)
 \end{equation}
where $\left((W^{w,x})^*(t;\phi),X^x(t;\phi)\right)$ are the solutions of the system (\ref{DYN_W_X_INFINITE_RETIREMENT_II}), starting at $(w,x)$ and with controls defined in feedback form (below we write $\Gamma^*(t)$ for $(\Gamma^{w,x})^*(t;\phi)$):
	\begin{align}\label{OPTIMAL_STRATEGIES_RET_INF}
	\begin{split}
	c^{*}(t)&:=
	f_{\infty}^{-1}  \Gamma^*(t)   \\
	B^{*}(t)&:=k^{ -b } f_{\infty}^{-1}  \Gamma^*(t)      \\
	\theta^{*}(t)&:=   (\sigma\sigma^\top)^{-1}(\mu-r \mathbf 1)  \frac{  \Gamma^*(t)}{ \gamma} -g^{\phi}_{\infty}X^x_0(t;\phi)(\sigma^\top)^{-1} \sigma_y ,
	\end{split}
	\end{align}
As a consequence,  $\Gamma^*$ is a Dol\'{e}ans exponential with dynamics
	\begin{align}\label{DYN_GAMMA*_PB1}
	\begin{split}
	d  \Gamma^* (t) =& \Gamma^* (t) \Big(  r + \delta +\frac{\kappa^\top \kappa}{\gamma}
	- f_{\infty}^{-1}\big( 1 
	+\delta k^{-b}\big) \Big)dt
	+  \frac{\Gamma^* (t)}{\gamma } \kappa^\top dZ(t).
	\end{split}
	\end{align}
and initial condition $\Gamma^*(0) = w + g^{\phi}_{\infty}x_0 + \langle h^{\phi}_{\infty}, x_1 \rangle$.
\end{theorem}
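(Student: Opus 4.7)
The plan is to carry out a classical verification argument, exploiting the fact that $\bar v(w,x)=f_\infty^\gamma G^\phi(w,x)^{1-\gamma}/(1-\gamma)$ depends on the infinite-dimensional state only through the scalar linear functional $G^\phi$. By Proposition \ref{PROP_COMPARISON_FINITENESS_VAL_FUN_INF_RET} we already know that $\bar v$ is a classical solution of the HJB equation \eqref{HJB_CLEAN_INF_RETbis}, and by \eqref{eq:GammaG} the candidate total wealth satisfies $\bar v(W^{w,x}(t),X^x(t))=f_\infty^\gamma (\Gamma^{w,x}(t))^{1-\gamma}/(1-\gamma)$. So all computations will reduce to one-dimensional Itô calculus on the scalar process $\Gamma$, which sidesteps the lack of full Fréchet regularity of $\bar v$ with respect to $x_1$.

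The first step is to derive the candidate feedback controls. Computing $\bar v_w=f_\infty^\gamma (G^\phi)^{-\gamma}$, $\bar v_{ww}=-\gamma f_\infty^\gamma (G^\phi)^{-\gamma-1}$, $\bar v_{wx_0}=-\gamma g^\phi_\infty f_\infty^\gamma (G^\phi)^{-\gamma-1}$, and inserting into the pointwise maximisers \eqref{MAX_POINTS_HAMILTONIAN__INF_RET} produces exactly the formulas \eqref{OPTIMAL_STRATEGIES_RET_INF}. The second step is to derive the SDE \eqref{DYN_GAMMA*_PB1}. Using the mild-solution Itô formula on $\langle X(t),(g^\phi_\infty,h^\phi_\infty)\rangle$, where $(g^\phi_\infty,h^\phi_\infty)\in \cald(A_\phi^*)$ by Lemma \ref{LEMMA_DIFF_EQ_G_INFTY_H_INFTY}, the drift becomes $\langle X(t),A_\phi^*(g^\phi_\infty,h^\phi_\infty)\rangle$; combined with the definition of $\beta$ and the relation $g^\phi_\infty(\beta-\beta^\phi_\infty)=1$, this and the wealth SDE \eqref{eq:stateW} combine (after the key cancellation $X_0(t)$ in the drift) into
\begin{equation*}
d\Gamma(t)=\bigl[(r+\delta)\Gamma(t)+\theta(t)^\top(\mu-r\mathbf 1)+g^\phi_\infty X_0(t)\sigma_y^\top\kappa-c(t)-\delta B(t)\bigr]dt+\bigl(\theta(t)^\top\sigma+g^\phi_\infty X_0(t)\sigma_y^\top\bigr)dZ(t).
\end{equation*}
Plugging in the feedback $\theta^*$ cancels the $X_0$ drift and diffusion terms and leaves precisely \eqref{DYN_GAMMA*_PB1}; in particular $\Gamma^*$ is a Doléans exponential, hence strictly positive, and $\pi^*$ is admissible.

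The third step is the verification inequality. For any admissible $\pi\in\Pi(w,x;\phi)$, apply Itô to $e^{-(\rho+\delta)t}\bar v(W(t),X(t))$ up to a localising sequence, using again $\bar v_x\in\cald(A_\phi^*)$ and the adjoint representation of $\langle A_\phi X,\bar v_x\rangle$. The HJB equation \eqref{HJB_CLEAN_INF_RETbis} gives
\begin{equation*}
d\bigl(e^{-(\rho+\delta)t}\bar v\bigr)\le -e^{-(\rho+\delta)t}U(c(t),B(t))\,dt+(\text{martingale})
\end{equation*}
with equality under the feedback $(c^*,B^*,\theta^*)$. Taking expectation, letting the localisation and $T\to\infty$ after showing a transversality condition $\limsup_{T\to\infty}e^{-(\rho+\delta)T}\E[\bar v(W(T),X(T))]\le 0$ (respectively $=0$ along $\pi^*$) gives $J(c,B)\le \bar v(w,x)$ with equality at $\pi^*$, hence $V=\bar v$ and $\pi^*$ is optimal.

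The main obstacle is the transversality step and the associated integrability/uniform-integrability issue. For $\gamma\in(0,1)$ one must control positive moments of $\Gamma(T)$ for arbitrary admissible $\pi$, while for $\gamma>1$ one deals with a non-positive integrand and has to rule out pathological negative blow-up; in both cases the tightness of Assumption \ref{HYP_BETA-BETA_INFTY}(ii) is used to balance the drift of $\Gamma^*$ against the discount factor. For $\pi^*$ transversality is direct from \eqref{DYN_GAMMA*_PB1}, since $\Gamma^*$ is a geometric Brownian motion with explicit exponent. For general $\pi$, the state constraint $\Gamma\ge 0$ combined with the linearity of $\Gamma$ in the controls allows a standard comparison with a super-replicating auxiliary process, exactly as carried out in \cite{BGP}; once that is granted, the verification concludes and yields the formulas displayed in the theorem.
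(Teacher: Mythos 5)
Your overall route is the paper's route: take $\bar v$ from Proposition \ref{PROP_COMPARISON_FINITENESS_VAL_FUN_INF_RET}, use Lemma \ref{LEMMA_DIFF_EQ_G_INFTY_H_INFTY} to compute $\langle X,A_\phi^*(g^\phi_\infty,h^\phi_\infty)\rangle$ and obtain the SDE for $\Gamma$, read the feedback controls off the Hamiltonian maximisers, and then verify. This is exactly how the paper proceeds (it in turn defers the technical details to \cite{BGP}, noting that the only new ingredient is the form of $A_\phi^*$ and that Lemma \ref{LEMMA_DIFF_EQ_G_INFTY_H_INFTY} is the only property of it ever used), so the disagreement is not in the architecture but in two places where your verification sketch, as written, does not close.

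First, the case $\gamma>1$. Your transversality condition $\limsup_{T\to\infty}e^{-(\rho+\delta)T}\E[\bar v(W(T),X(T))]\le 0$ is the wrong inequality there: since $\bar v\le 0$, it holds trivially and gives nothing, while the verification inequality $\E\int_0^T e^{-(\rho+\delta)t}U\,dt\le \bar v(w,x)-\E[e^{-(\rho+\delta)T}\bar v(W(T),X(T))]$ requires the discounted term to tend to zero, i.e.\ a uniform control of \emph{negative} moments $\E[\Gamma(T)^{1-\gamma}]$ over all admissible strategies, which is not available (admissible strategies may drive $\Gamma$ arbitrarily close to $0$). This is precisely why the paper states that for $\gamma>1$ the last two steps ``must be done differently, using the homogeneity of the problem and the Dynamic Programming Principle'' rather than a supersolution-plus-transversality argument; your proposal treats $\gamma>1$ as a variant of the same scheme and so leaves the upper bound $V\le\bar v$ unproved in that regime. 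Second, the boundary of $\mathcal H^\phi_+$: the theorem is stated for $(w,x)\in\mathcal H^\phi_+$, not $\mathcal H^\phi_{++}$, and even from interior data an admissible trajectory may reach $\{G^\phi=0\}$, where $\bar v$ is not a classical solution (it is $0$ or $-\infty$ by Remark \ref{rm:vonboundary}), so the It\^o/localisation step needs the paper's first step — that from a boundary point the admissible set reduces to a single strategy keeping the state on the boundary forever — which your sketch omits entirely. These are genuine gaps rather than routine details: they are exactly the points the paper singles out as the substance of the proof inherited from \cite{BGP}.
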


\begin{proof}
 The proof is long and non trivial but it can be done in the same way as in \cite{BGP}. It consists in the following main steps for $\gamma\in (0,1)$.
\begin{itemize}
\item Show that the set of admissible strategies when the initial point belongs to the boundary of $\mathcal{H}^\phi_+$ is  made only by one element, which keeps the state on the boundary forever. This is a key issue in the state constrained problem.
\item Prove the fundamental identity which, in turns, implies $V(w,x;\phi)\le \bar v(w,x)$, for every $(w, x) \in \mathcal{H}^\phi_+$,
\item Show that $\bar v=V$ (Verification Theorem) and find the optimal strategies in feedback form as the maximizers of the Hamiltonian.
\end{itemize}
When $\gamma >1$ the first step is the same while the other two must be done differently, using the homogeneity of the problem and the Dynamic Programming Principle.

 The difference between the present case and the one treated in \cite{BGP} is the fact that $\phi$ is now a Radon measure.
Once a general existence and uniqueness theorem for the equation
\eqref{DYNAMICSy} is established (see Appendix \ref{app:wellposed})
the only change which arises in the present proof is the form of the adjoint operator $A^{*}_{\phi}$.
However, in all the steps described above in \cite{BGP} the only property of $A^{*}_{\phi}$ which is used is Lemma \ref{LEMMA_DIFF_EQ_G_INFTY_H_INFTY}.
Once this is established, all the technical details of the steps described above can be carried on exactly as in \cite{BGP}.
\end{proof}
 To conclude, in the following Remark we compare the structure of the optimal control with the ones in the classic Merton problem. In particular, we focus on the hedging demand $\theta^*$ and illustrate how the result would change if the driving Brownian motion in the equation for the labor income is not perfectly correlated with $Z$.
\begin{remark}\label{VICEIRA}
     The structure of the above solutions is in line with Merton's results. The value function is proportional to the utility of (running) total wealth. The optimal controls are a constant fraction of the optimal total wealth $\Gamma^*$, modulo a correction for $\theta^*$. Such correction is due to the negative hedging demand arising in the presence of  an income perfectly correlated with the market noise.  This is due to the fact that the agent is already exposed to labor income risk, identical to the stock market one, and therefore invests less than in the classic case without labor.  This is in line  with \cite{DYBVIG_LIU_JET_2010}, where there is perfect correlation  but  no path dependency. \\
    \indent However, things change in the general correlation case as introduced in Remark \ref{General-correlation}. If we let $X_0=y$ be driven by $Z^y = C_1 Z +C_2 Z^*$ instead of $Z$, then the mixed second derivative $v_{wx_0}$  in \eqref{DEF_H_cv_INF_RET} (corresponding to $P_{12}$) has the general coefficient
    $$ x_0 \theta^T \sigma C_1^T \sigma_y  $$
    instead of $ x_0\theta^T \sigma \sigma_y $, since $C_1$ is not the identity matrix anymore. Nothing else would change in the computations, and so the resulting  $\theta^*$ in \eqref{OPTIMAL_STRATEGIES_RET_INF} is
    $$ \theta^{*}(t)=   (\sigma\sigma^\top)^{-1}(\mu-r \mathbf 1)  \frac{  \Gamma^*(t)}{ \gamma} - g^{\phi}_{\infty}X^x_0(t;\phi) (\sigma^\top)^{-1}C_1^T \sigma_y. $$
     We observe that in a perfectly negative correlation case,   $C_1=-I$ and then the hedging demand increases in all the stocks. When stocks and labor income have $0$ correlation,  $C_1=0_n$, then the correction term vanishes. In a general case, the hedging demand will increase in the negatively correlated components, and decrease in the others. This extends the results found by  Viceira   in discrete time, see  the seminal paper \cite{VICERIA}.
 \end{remark}

\section{The robust problem}
\label{sec3}
\subsection{ The controls of the malevolent Nature }

 The delay measure  $\phi$ is  now  allowed to be a  (measure valued) stochastic process $\phi(\cdot)$. The process $\phi(\cdot)$ is not revealed to the agent, but is picked by an adversary player (the malevolent Nature) from a set of admissible controls  which take values in a suitable set $K$. The agent then aims at finding an optimal strategy which is  robust with respect to the Nature's move.  We start with the precise assumptions on $K$,  and then we focus on  the Nature controls $\phi(\cdot)$.
\begin{assumption}\label{asK}
 \begin{enumerate}
 \item The uncertainty set $K $ is a subset of $ \mathcal M^0$.
  \item  $K$  has an order minimum
\begin{equation*}
  \exists\, {\nu} \in K \text{ s.t. } \qquad {\nu} \, \leq \phi \qquad \forall \phi \in K,
\end{equation*}
 The order must be intended  in the natural lattice structure of the Radon measures, as recalled in \eqref{order}.
\end{enumerate}
\end{assumption}

The assumptions on the uncertainty set $K$  are inline with part of the current literature, see   \cite{BP} for more details and references.   Existence of an order  infimum in the set $K$  is  needed  since  in the resolution of the  Nature-agent game    we apply  a  monotonicity argument.  This should be contrasted with  another branch of the  literature, which bases the resolution of the maxmin problem   on topological continuity and convexity properties of the functionals and sets involved (compactness). In fact, a technical  tool which is typically used there is Sion's Minimax Theorem (see for example \cite{NN}).\\

\begin{example}
Fix $\phi_0,\psi \in  \mathcal{M}^0$  with $\psi \geq 0$. Then, the tubular neighborhood of  $\phi_0 \in \mathcal{M}^0$:
\begin{equation*}
K:=\{\phi \in  {\mathcal{M}^0}:  \phi_0-\psi \le \phi \le \phi_0+\psi\}
\end{equation*}
is a set of measures verifying the conditions   stated in the previous Assumption.
   The interpretation is that $\phi_0$ is an estimate of  the  impact of the past on   income dynamics, while $\psi $ is the   estimation error. Here the order minimum  ${\nu}$ is $\phi_0-\psi $.
\end{example}

 Since we are considering pre-death versions of the processes involved, in the sequel the filtration on $\Omega$ can be taken to be $\mathbb{F}$ as remarked in equation \eqref{eq:GFpred}.
Then, on $\Omega\times [0,\infty)$ we consider the progressive $\sigma$-algebra $\textsf{Prog}$, which is the one generated by all progressively measurable processes on $\Omega\times [0,\infty)$.
Now we recall that a ($\mathcal{M}^0$) transition kernel\footnote{See e.g. \cite[p.19]{Kallenberg} for the definition.}
$\psi$ between the spaces $(\Omega\times [0,\infty), \textsf{Prog} )$
and $([-d,0), \mathcal{B}([-d,0))$ is a mapping
$$
\psi: \left(\Omega\times [0,\infty)\right)\times
\mathcal{B}([-d,0))   \rightarrow \R$$
such that: the process $(\omega,t) \to \psi(\omega,t) (B)$ is progressively measurable for each Borel set $B$ in $[-d,0)$; and $\psi(\omega,t) (\cdot)$ is a measure in $\mathcal{M}^0$.
Clearly such transition kernel can be written as a measurable map
\footnote{
 Here we endow $\mathcal{M}^0$ with the topology inherited from the one on $\mathcal{M}$ as a dual space of $C([-d,0];\R)$, and we consider the asociated Borel $\sigma$-field.}
$$
\psi: \Omega\times [0,\infty)\rightarrow \mathcal{M}^0,
$$
i.e.  \emph{such Radon transition kernel can be seen as a process taking values in the set $\mathcal{M}^0$.}
The Nature's controls will be the transition  kernels as above, which are in addition  $K$-valued.

\begin{definition}
\label{df:adm}
A  $K$-valued transition kernel $\psi$ is said to be \emph{admissible} if
it  is locally bounded in time for the total variation norm. That is, for all $T>0$ there exists a constant $c_T\geq 0$ such that
          $$  \sup_{0\leq t\leq T} \| \psi(\omega, t) (ds) \| \leq c_T \qquad \P-a.s. $$
\end{definition}
\begin{definition}
\label{df:controlNature}
The set of Nature's controls  $\mathcal{K}$ is the set of all  admissible transition kernels. The  controls will be denoted by $\phi(\cdot)$. When $B$ is fixed, when considering the r.v. $\phi(t, \cdot)(B)$ we will write $\phi(t)(B)$ as a shortcut. In the same way, the (random) measure $\phi(t, \cdot)(ds)$ will be denoted by $ \phi(t)(ds)$.
\end{definition}
 Consider now a map
$$
N:  \Omega \times [0,\infty) \times [-d,0)\rightarrow \mathbb{R}
$$
which is measurable for the $\sigma$-algebra
$\textsf{ Prog}\otimes \mathcal{B}([-d,0))$, continuous in the third argument $s$, and locally bounded in time $t$
(e.g. continuous in time as well).
Then, as a consequence of the above definitions, for every admissible control $\phi(\cdot)$ the integral process $Y$:
$$
Y_t (\omega) = \int_0^t du \int_{-d}^0 N(\omega, u,s)\phi(u)(ds) \qquad t\geq 0
$$
is well defined, a.s. continuous and square integrable (it belongs to $\mathcal{L}^2_1$, see Definition \ref{not:basic}).
\subsection{ The agent's robust controls}
As opposed to the abstract formulation in Section 3,  we go back to  the notation $y$ for labor income as in Section 2.

The  SDDE for the labor income evolution under a Nature's control becomes
\begin{align}\label{EV-y-phi()}
\begin{split}
\left\{\begin{array}{ll}
dy(t) = & \left[ y(t) \mu_y+\int_{-d}^0 y(t+s)\phi(t)(ds)   \right] dt + y(t)\sigma_y^\top    dZ(t),\\[2mm]
y(0)= & x_0, \quad y(s) = x_1(s) \mbox{ for $s \in  [-d,0)$},
\end{array}\right. \end{split}
\end{align}
 in which $ y(t+s)(\omega)$  plays the role of $N(\omega, t,s)$.
  As already mentioned in Section 2.1 for the constant $\phi$ case, when $x_1 \in L^2([-d,0);\mathbb{R})$
  and $t\in [0,d)$,
  the  integral $\int_{-d}^0 y(t+s)\phi(t)(ds)$
  may not make sense.
 Exploiting the density of continuous functions however,  Proposition \ref{Prop-SDDE-app} shows that the Cauchy problem with
$$
x =(x_0,x_1) \in M_2
$$
not only makes sense but has a unique strong solution $y^x(\cdot;\phi(\cdot))$ with $P$-a.s. continuous paths.  Moreover, the feedback representation of $y^x(\cdot;\phi(\cdot))$ given in Proposition \ref{pr:ypositive} still holds:
\begin{equation}\label{EQ-y-phi()-feedback}
y^{x}(t;\phi(\cdot)) = E(t)(x_0 + I(t))
\end{equation}
with
$$
E(t)=e^{(\mu_y-\frac12 \sigma_y^\top \sigma_y)t + \sigma_y^\top Z(t)},
\quad
I(t) = I(t; \phi(\cdot))=\int_0^t E^{-1}(u) \left ( \int_{-d}^0 y^x(u+s;\phi(\cdot)) \phi(u)(ds) \right ) du
$$

When the  agent picks a   strategy $\pi\in \Pi^0$,
  $W^{w,x}(\cdot;\phi(\cdot),\pi)$ denotes  the corresponding solution of state equation for the wealth, and $\Gamma^{w,x}(\cdot;\phi(\cdot),\pi)$ the associated total wealth process.

 For a general  $\phi(\cdot)\in \mathcal{K}$ the Markovian representation of the total wealth  from a triplet $(c,B, \theta)$ as given in \eqref{eq:GammaG}  may not hold anymore. Therefore, for given initial conditions $(w,x)\in \mathcal{H}$, $\phi(\cdot)\in \mathcal{K}$ and
$\pi=(c, B,\theta)\in \Pi^0$, the total wealth $\Gamma^{w,x}(t;\phi(\cdot),\pi)$ must be calculated
as defined in \eqref{budget-constraint}. Given this,  the set of admissible strategies $\Pi(w,x;\phi(\cdot))$ is the generalization of \eqref{DEF_PI_FIRST_DEFINITION}. The \emph{robust} set of the agent's  controls is then defined as follows.

\begin{definition}
Given the initial data $(w,x)$, the agent's robust admissible controls are given by:
\begin{equation*}
\Pi^{rob}(w,x):= \bigcap_{\phi(\cdot) \in \cK}
 \Pi(w,x;\phi(\cdot))
\end{equation*}
\end{definition}
 Note that the set $\Pi^{rob}(w,x)$ is independent of the Natures's control $\phi(\cdot)$, as the set $\cK$ does not depend on the controls of the agent.
Similarly  to what happens with constant $\phi$,  the robust set of controls can be empty. Thus,  {we impose that } the order minimum measure $\nu$ verifies additional conditions.
 \begin{assumption}
 \label{ass:munew}
Assumption \ref{HYP_BETA-BETA_INFTY}-(i)
holds, with $\phi=\nu$, i.e. $\beta>\beta_\infty^{\nu}$, so $g_\infty^{\nu}>0$. Moreover
$h^{\nu}_\infty (s)\ge 0$ for all $s \in [-d,0]$, where $g^{\nu}_\infty, h^{\nu}_\infty$ are defined in \eqref{DEF_g_infty_h_infty} (with $\phi$ in place of $\nu$).
\end{assumption}
The next Lemma proves monotonicity of labor income with respect to  $\phi(\cdot)$.
\begin{lemma} \label{Labor-MON}
 Let $x\in M_2$ strictly positive and consider $\phi(\cdot), \psi(\cdot) \in \cK$, such that
$$
\phi(t) \leq \psi(t) \, \quad
 \  \ \P-\text{a.s. for all } t\geq 0
$$
Assume also that $y^x(t; \phi(\cdot)) > 0$ for all $t\ge 0$. Then, for all $t\ge 0$,
$$
y^x(t; \phi(\cdot)) \leq  y^x(t; \psi(\cdot)).
$$
Moreover, if we also ask that
\begin{equation}\label{eq:suppnotequal}
supp (\phi (t) )\subsetneq supp (\psi (t) )
 \  \ \P-\text{a.s. for all } t\geq 0
\end{equation}
then the inequality
$$
y^x(t; \phi(\cdot)) \leq  y^x(t; \psi(\cdot))
$$
is strict for $t>0$.
\end{lemma}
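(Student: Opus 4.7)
The plan is as follows. I would set $\Delta(t) := y^x(t;\psi(\cdot)) - y^x(t;\phi(\cdot))$, which vanishes on $[-d,0]$ since the two solutions share the initial datum $x$. Subtracting the state equations \eqref{EV-y-phi()} for the two labor income processes and using the algebraic identity
\begin{equation*}
y^x(t+s;\psi(\cdot))\psi(t)(ds) - y^x(t+s;\phi(\cdot))\phi(t)(ds) = \Delta(t+s)\psi(t)(ds) + y^x(t+s;\phi(\cdot))(\psi(t)-\phi(t))(ds),
\end{equation*}
one checks that $\Delta$ satisfies the linear SDDE
\begin{equation*}
d\Delta(t) = \left[\mu_y \Delta(t) + \int_{-d}^0 \Delta(t+s)\,\psi(t)(ds) + F(t)\right]dt + \sigma_y^\top\Delta(t)\,dZ(t),
\end{equation*}
with vanishing initial condition on $[-d,0]$ and source
$F(t) := \int_{-d}^0 y^x(t+s;\phi(\cdot))\bigl(\psi(t)-\phi(t)\bigr)(ds)$. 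The key observation is that $F(t)\ge 0$ $\P$-a.s.: the signed measure $\psi(t)-\phi(t)$ is nonnegative by hypothesis, while $y^x(t+s;\phi(\cdot))\ge 0$ because $y^x(\cdot;\phi(\cdot))>0$ on $[0,\infty)$ by hypothesis and $x_1\ge 0$ on $[-d,0)$ by strict positivity of $x$.

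Next, I would apply the variation-of-constants representation underlying Proposition \ref{Prop-SDDE-app} to this nonhomogeneous linear SDDE, obtaining
\begin{equation*}
\Delta(t) = E(t)\int_0^t E^{-1}(u)\left[\int_{-d}^0\Delta(u+s)\,\psi(u)(ds) + F(u)\right]du,
\end{equation*}
with the convention $\Delta(v):=0$ for $v<0$. Since $E(t)>0$ pathwise, proving $\Delta\ge 0$ reduces to showing positivity of the bracketed Volterra integral, itself a fixed-point equation for $\Delta$ with nonnegative source $F$. The positivity is then established by a Picard iteration combined with the method of steps: on $[0,d]$ one initialises $\Delta^{(0)}(t):=E(t)\int_0^t E^{-1}(u)F(u)\,du\ge 0$ and iterates
\begin{equation*}
\Delta^{(n+1)}(t) := \Delta^{(0)}(t) + E(t)\int_0^t E^{-1}(u)\int_{-d}^0 \Delta^{(n)}(u+s)\,\psi(u)(ds)\,du,
\end{equation*}
with convergence $\Delta^{(n)}\to\Delta$ in $\mathcal{L}^2_1$ following from the same contraction estimate that delivers Proposition \ref{Prop-SDDE-app}. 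One propagates $\Delta\ge 0$ inductively across the successive windows $[kd,(k+1)d]$, at each stage treating the delayed integral as a perturbation of the positive source $F$ and using the strict positivity of $y^x(\cdot;\phi(\cdot))$ to keep the effective forcing nonnegative.

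The main obstacle is exactly this positivity propagation: when $\psi$ has a nontrivial negative part, the Volterra resolvent is not automatically positivity-preserving term by term, so one has to exploit the dominating role of the nonnegative source $F$ together with the admissibility of $\psi(\cdot)$ (locally bounded total variation, cf.\ Definition \ref{df:adm}) to sustain the sign through the iteration on each time window. Finally, for the strict inequality under assumption \eqref{eq:suppnotequal}, the strict support containment ensures that $\psi(t)-\phi(t)$ is nontrivial on a set on which $y^x(t+\cdot;\phi(\cdot))>0$, so $F(t)>0$ on a set of positive $\P\otimes dt$ measure. The variation-of-constants representation then upgrades the inequality $\Delta\ge 0$ to the strict inequality $\Delta(t)>0$ for all $t>0$.
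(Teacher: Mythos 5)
Your setup (the difference process $\Delta$, the decomposition of the drift into a delayed term in $\psi$ plus the source $F(t)=\int_{-d}^0 y^x(t+s;\phi(\cdot))(\psi(t)-\phi(t))(ds)\ge 0$, and the variation-of-constants representation) coincides with the paper's starting point, but the step where you actually conclude $\Delta\ge 0$ is a genuine gap, and you have in fact put your finger on it yourself: the measures in $\mathcal K$ are signed elements of $\mathcal M^0$, so the Volterra operator $z\mapsto E(\cdot)\int_0^\cdot E^{-1}(u)\int_{-d}^0 z(u+s)\psi(u)(ds)\,du$ is not positivity-preserving, and your Picard/method-of-steps iteration does not propagate nonnegativity: if $\Delta^{(n)}\ge 0$ and $\psi(u)$ has a negative part, the delayed integral can be negative and $\Delta^{(n+1)}$ can dip below zero on later windows. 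Saying that one should ``exploit the dominating role of the nonnegative source $F$'' is not an argument, because $F\ge 0$ carries no quantitative lower bound (the mass of $\psi(t)-\phi(t)$ can be zero or arbitrarily small), so there is nothing to dominate the signed delayed term with. This is exactly why the paper does not argue directly: it (i) first assumes continuous initial data and a \emph{uniform} gap $\inf_{t\le T}\|\psi(t)-\phi(t)\|>b_T$, which turns the source into a strictly positive forcing $m_T b_T$ and allows a continuity/stopping-time contradiction argument (the level $-\frac{m_T b_T}{c_T(n+2)}$ cannot be reached because at the hitting time the representation forces $z>0$); (ii) removes the continuity of $x_1$ by approximating with continuous data and invoking the continuous dependence of Proposition \ref{Prop-SDDE-app}; and (iii) removes the uniform gap by replacing $\psi(\cdot)$ with $\psi_n(\cdot)=\psi(\cdot)+\frac{1}{n+1}ds$ and proving $y^x(\cdot;\psi_n(\cdot))\to y^x(\cdot;\psi(\cdot))$ through a stability estimate with a contraction constant uniform in $n$. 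None of these three mechanisms (or a substitute for them) appears in your sketch.

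A secondary but real problem: for $t<d$ and $x_1$ only in $L^2([-d,0);\R)$, your source $F(t)$ is written as a pointwise integral of $y^x(t+\cdot;\phi(\cdot))$, which on part of the window equals $x_1$, defined only $ds$-a.e.; against a measure $\psi(t)-\phi(t)$ that is not absolutely continuous with respect to $ds$ this integral need not be well defined, so even the claim ``$F(t)\ge 0$ $\P$-a.s.'' is not meaningful as stated. The paper's detour through continuous initial data is also what cures this. Finally, your strict-inequality argument inherits both issues: it presupposes $\Delta\ge 0$ and then again needs a mechanism preventing the signed delayed term from cancelling the (merely a.e. positive) forcing; in the paper this is obtained from the feedback inequality with the strictly positive margin, respectively from the feedback formula once nonnegativity of $z$ and strict positivity of $y^x(\cdot;\phi(\cdot))$ are in hand.
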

\begin{proof}
The proof is split in three steps.
\begin{enumerate}
  \item  First we prove the statements under the additional assumptions that: $x=(x_0, x_1)$ is continuous when seen as a function on $[-d,0]$; and, for all $T>0$,  there exists a constant $b_T>0$ such that
\begin{equation}\label{eq:phipsimag}
      \inf_{t\in [0,T]}\|\psi(t) -\phi(t) \|  >b_T, \quad \P -       a.s.
\end{equation}
The latter means that $\psi(\cdot)$ is \emph{ uniformly bigger }than $\phi(\cdot)$ on each compact time interval. Since we are assuming $\phi(t)\leq \psi(t)$ such uniform bound  implies the support condition \eqref{eq:suppnotequal}. \\
Now,  the datum $x$ is continuous on $[-d,0]$,   $y^x(t; \phi(\cdot))$ is continuous  on $[-d,+\infty)$. Such solution is strictly positive by assumption, so that  its minimum $m_T$ is strictly positive on each interval $[-d,T]$, $T>0$. Let  $z(t) = y^x(t; \psi(\cdot)) -y^x(t; \phi(\cdot)) $.  By the representation \eqref{EQ-y-phi()-feedback},
$$ z(t)=E(t)  \int_0^t du \, E^{-1}(u) \left(\int_{-d}^0 y^x(u+s; \psi(\cdot)) \psi(u)(ds)- \int_{-d}^0 y^x(u+s; \phi(\cdot)) \phi(u)(ds)\right)$$
Setting $z=0$ on $[-d,0]$ we can  rewrite the above as
 \begin{equation}\label{eq:zfeedback}
z(t)=  E(t) \int_0^t du \, E^{-1}(u) \left(\int_{-d}^0 z(u+s) \psi(u) (ds) + \int_{-d}^0 y^x(u+s; \phi(\cdot)) (\psi(u)-\phi(u))(ds)\right)
\end{equation}
which then verifies
\begin{equation}\label{eq:zfeedunif} z(t)\geq E(t) \int_0^t du \, E^{-1}(u) \left(\int_{-d}^0 z(u+s) \psi(u) (ds) + m_T  b_T \right)   \ \ 0\leq t\leq T
\end{equation}
Now fix $n \in \mathbb{N}$ and set $\tau_n = \inf \left \{ t \in [0,T] \mid z(t)< -\frac{ m_T  b_T }{ {c_T} (n+2)} \right \}$, in which $ {c_T}$ is the bound in total variation of the kernel $\psi(\cdot)$ on $[0,T]$. Since $z(0)=0$ and $z$ is continuous, it must be $\tau_n>0$ a.s. Continuity also implies
$z( \tau_n) = -\frac{ m_T  b_T }{ {c_T} (n+2)}$ where $\tau_n<\infty$. This however cannot happen because  by the inequality displayed above $z(\tau_n)>0$. Thus, $\tau_n =\infty$ a.s. for all $n$, which implies $z(t)\geq 0$ for all $t\leq T$.   Strict positivity of $z$   immediately follows from positivity and  the feedback inequality \eqref{eq:zfeedunif} for $z$.
Repeating the argument for all $T>0$ concludes the proof of this step.
  \item Suppose now that $x =(x_0,x_1)$ is a positive datum in $  M_2$, while we keep  the hypothesis \eqref{eq:phipsimag} that $\psi(\cdot)$ is uniformly bigger than $\phi(\cdot)$. Pick a sequence of continuous functions $h_n \in C([-d,0]; \mathbb{R}), h_n > 0$, $h_n(0)=x_0$ , with $h_n \rightarrow x_1 $ in $L^2([-d,0);\mathbb{R})$. Using the continuity of the solution $y^x$ with respect to the datum (see Proposition \ref{Prop-SDDE-app}), we can extract a subsequence, still denoted by $h_n$, such that
$$    y^{h_n}(t; \phi(\cdot))(\omega) \rightarrow  y^x(t; \phi(\cdot))(\omega)  \qquad    \text{ on } [0,T], \qquad \text{ $\P$- a.s.}   $$
By extracting once more if necessary, the above holds true also if  we substitute $\phi(\cdot)$ with $\psi(\cdot)$.
The first part of the statement then follows simply passing to the limit when $n \to +\infty$.
\\
By the first step above,
$$
y^{h_n}(t; \phi(\cdot)) < y^{h_n}(t; \psi(\cdot)).$$
Passing to the limit for $n\to +\infty$
we  obtain $y^x(t; \phi(\cdot))\leq y^x(t; \psi(\cdot))$.
Using the feedback inequality \eqref{eq:zfeedunif}
for the difference
$z = y^x(t; \psi(\cdot))- y^x(t; \phi(\cdot))$, we get strict positivity.

\item Now, consider a general datum $x$ and a general kernel $\psi(\cdot) \geq \phi(\cdot)$. Define a new kernel $\psi_n(\cdot)$ by adding to $\psi(\cdot)$  a constant (=not time dependent) term with flat density $\frac{1}{n+1}$ wrt the Lebesgue measure $ds$:  $$ \psi_n(t): = \psi(t) + \frac{1}{n+1}ds$$
    Then, each $\psi_n(\cdot)$ is uniformly bigger than $\phi(\cdot)$, with uniform lower bound $b_T(n)\geq \frac{1}{n+1}$. So, by step 2
     $$ y^x(t; \psi_n(\cdot))> y^x(t; \phi(\cdot)) $$

       From  the proof of  Proposition \ref{Prop-SDDE-app}, the integral map $F$ with general delay kernel is a contraction in $L^p(\Omega; C(([0,T]; \mathbb{R}))$ for each $p>2$. The  contraction constant $0<C_T(\alpha)<1$ depends a sufficiently large coefficient $\alpha>0$, which in turn depends  on the delay kernel only through its bound on total variation  $ {c_T}$.  Here, if we consider $\psi_n(\cdot)$ as delay kernel, the bound on its total variation is $(c_n^*)_T : = { c_T} + \frac{1}{n+1} \leq  {c_T} +1 $ . This implies that we can find a common contraction coefficient $C_T(\alpha)$, for all $n$.   Call $F^{\psi_n}$ the integral map with delay $\psi_n(\cdot)$, and similarly for $F^{\psi}(\cdot)$.
      Then we have, in the $\alpha$ norm on $L^p(\Omega; C(([0,T]; \mathbb{R}))$ (see \eqref{eq:defnormalfa}):
     \begin{align*}
      \| y^x(t; \psi_n(\cdot))-  y^x(t; \psi(\cdot))\|_{\alpha} & = \| F^{\psi_n}(y^x(t; \psi_n(\cdot)))-F^{\psi}(y^x(t; \psi(\cdot)))\|_{\alpha} \\
      & \leq \|F^{\psi_n}( y^x(t; \psi_n(\cdot)) )-F^{\psi_n}( y^x(t; \psi(\cdot)) )\|_{\alpha} \\ & +
     \|F^{\psi_n}( y^x(t; \psi(\cdot)) )-F^{\psi}(y^x(t; \psi(\cdot)))\|_{\alpha}\\
     & \leq C_T(\alpha) \| y^x(t; \psi(\cdot)) - y^x(t; \psi_n(\cdot)) \|_\alpha+ \frac{1}{n}\| y^x(t; \psi(\cdot)) ) \|_\alpha T  \end{align*}
   This can be deduced from the proof of Proposition \ref{Prop-SDDE-app}, specialized to the present case. So,
 $$ \| y^x(t; \psi_n(\cdot))-  y^x(t; \psi(\cdot))\|_{\alpha} \leq \frac{1}{1- C_T(\alpha)} \frac{1}{n}\|y^x(t; \psi(\cdot))\|_{\alpha}  T  $$
 Therefore  $y^x(t; \psi_n(\cdot)) \rightarrow y^x(t; \psi(\cdot))$ in $L^p$,   modulo an extraction  we can pass to the a.s. limit for $n\to + \infty$ to conclude, once again, that
 $$ y^x(t; \psi(\cdot)) \geq y^x(t; \phi(\cdot))$$
 When the support condition \eqref{eq:phipsimag} is verified, by strict positivity of $y^x(t; \phi(\cdot))$   and the feedback formula \eqref{eq:zfeedback} we get  that the above inequality is strict.
\end{enumerate}

\end{proof}

\begin{proposition}
\label{Pirob}
Let $w\in \R$. Let also $x\in M_2$ be strictly positive and such that $y^x(t;\nu)>0$, $P$-a.s., for all $t \ge 0$. Then, under assumption \ref{asK},
\begin{equation}\label{eq:propRob1}
\Pi^{rob}(w,x)=\Pi(w,x;\nu)
\end{equation}
As a consequence,
\begin{equation}\label{eq:propRob2}
\Pi^{rob}(w,x)\ne \emptyset
\quad \Longleftrightarrow \quad
(w,x) \in \cal H^{\nu}_+,
\end{equation}
where $\cal H^{\nu}_+$ is the admissible set of initial data for $\nu$:
\begin{equation*}
\mathcal{H}^{\nu}_+ =\{ (w,x) \mid
\Gamma^{w,x}(0;\nu,\pi)= G^{\nu}(w,x) \geq 0\}
\end{equation*}
as defined at the end of Section 3.1.
\end{proposition}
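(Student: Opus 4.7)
The plan is to establish the equality $\Pi^{rob}(w,x)=\Pi(w,x;\nu)$ by proving the two inclusions separately, and then read off the characterization of non-emptiness as a direct consequence.

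For the inclusion $\Pi^{rob}(w,x)\subseteq \Pi(w,x;\nu)$, I would observe that the deterministic, constant-in-time transition kernel $\phi(\omega,t)\equiv \nu$ is admissible in the sense of Definition \ref{df:adm}: it takes values in $K$ by Assumption \ref{asK}, and has constant total variation $\|\nu\|$, hence is locally bounded in time. Consequently, the constant kernel $\nu$ belongs to $\mathcal K$, and by the very definition of the intersection $\Pi^{rob}(w,x)=\bigcap_{\phi(\cdot)\in\mathcal K}\Pi(w,x;\phi(\cdot))$, every robust control is a fortiori admissible against $\nu$.

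For the reverse inclusion $\Pi(w,x;\nu)\subseteq \Pi^{rob}(w,x)$, I would fix $\pi\in \Pi(w,x;\nu)$ and an arbitrary $\phi(\cdot)\in \mathcal K$, and aim to show $\Gamma^{w,x}(t;\phi(\cdot),\pi)\ge \Gamma^{w,x}(t;\nu,\pi)\ge 0$ for every $t\ge 0$. The key input is Lemma \ref{Labor-MON}: since $\phi(t)\ge \nu$ $P$-a.s.\ for all $t\ge 0$ and the hypothesis $y^x(\cdot;\nu)>0$ is in force, the lemma applies (with the constant kernel $\nu$ in the role of the smaller one) and yields the pathwise comparison $y^x(t;\phi(\cdot))\ge y^x(t;\nu)$, $P$-a.s.\ for all $t\ge 0$. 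This monotonicity in the labor income then propagates to the two building blocks of $\Gamma$:
\begin{itemize}
\item[(i)] For fixed $\pi$, subtracting the wealth equation \eqref{eq:stateW} for the two labor incomes cancels both the diffusion and the $\pi$-dependent drift terms, leaving a pathwise linear ODE
\[
d\bigl(W^{w,x}(t;\phi(\cdot),\pi)-W^{w,x}(t;\nu,\pi)\bigr)
=(r+\delta)\bigl(W^{w,x}(t;\phi(\cdot),\pi)-W^{w,x}(t;\nu,\pi)\bigr)dt
+\bigl(y^x(t;\phi(\cdot))-y^x(t;\nu)\bigr)dt
\]
with zero initial condition. Solving by variation of constants and using non-negativity of the forcing gives $W^{w,x}(t;\phi(\cdot),\pi)\ge W^{w,x}(t;\nu,\pi)$.
\item[(ii)] The conditional expectation $\xi^{-1}(t)\mathbb E\bigl[\int_t^\infty \xi(u)\,y^x(u;\cdot)\,du\mid \mathcal F_t\bigr]$ appearing in \eqref{budget-constraint} is monotone in its integrand pathwise, so it too dominates the analogous quantity under $\nu$.
\end{itemize}
Adding (i) and (ii) yields $\Gamma^{w,x}(t;\phi(\cdot),\pi)\ge \Gamma^{w,x}(t;\nu,\pi)\ge 0$ for all $t\ge 0$ and all $\phi(\cdot)\in\mathcal K$, hence $\pi\in\Pi^{rob}(w,x)$.

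The consequence \eqref{eq:propRob2} is then immediate from \eqref{eq:propRob1}: if $\Pi(w,x;\nu)\ne\emptyset$, any $\pi$ in it satisfies $\Gamma^{w,x}(0;\nu,\pi)=G^\nu(w,x)\ge 0$, so $(w,x)\in\cal H^{\nu}_+$; conversely, if $(w,x)\in\cal H^{\nu}_+$ then Theorem \ref{TEO_MAIN_INF_RET} applied to the problem with fixed weight $\nu$ (which is legitimate since Assumption \ref{ass:munew} guarantees $\beta>\beta_\infty^\nu$ and non-negative $h_\infty^\nu$) provides an explicit admissible feedback, so $\Pi(w,x;\nu)\ne\emptyset$. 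I expect no serious obstacle: the only nontrivial ingredient is the pathwise monotonicity coming from Lemma \ref{Labor-MON}, and once that is invoked, the passage to $W$ and to the present-value functional is a routine comparison for a linear ODE and for a monotone conditional expectation.
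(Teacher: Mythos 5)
Your proposal is correct and follows essentially the same route as the paper: the equality \eqref{eq:propRob1} via the monotonicity of $y$, $W$, and hence $\Gamma$ in $\phi(\cdot)$ supplied by Lemma \ref{Labor-MON} (using that the constant kernel $\nu$ lies in $\mathcal K$), and the nonemptiness characterization \eqref{eq:propRob2} via Theorem \ref{TEO_MAIN_INF_RET} applied with $\phi=\nu$ under Assumption \ref{ass:munew}. Your explicit variation-of-constants comparison for the wealth difference and the monotone conditional expectation merely spell out what the paper summarizes as ``the wealth $W$ is monotone increasing in $y$.''
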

\begin{proof}
The proof is based on a monotonicity argument.
 We first prove \eqref{eq:propRob1}.
Since $\nu$ is the order minimum, any fixed $\phi(\cdot)\in \mathcal{K}$ verifies
$$ \nu \leq \phi(t)(\omega) \qquad \forall{(\omega, t)}$$
 By Lemma \ref{Labor-MON}  the labor income $y^x(\cdot ;\phi(\cdot))$ is monotone increasing with respect to $\phi(\cdot)$, for any $\phi(\cdot) \in \cal K$. In turn,  the wealth $W$ is monotone increasing in $y$. This  easily implies
 (using \eqref{budget-constraint})  that the total wealth $\Gamma^{w,x}(t;\phi(\cdot),\pi)$ is also monotone increasing in $\phi(\cdot)$. From the very  definition, a strategy $\pi$ belongs to $\Pi^{rob}(w,x)$ if it verifies the total wealth positivity constraint for all $\phi(\cdot)$. This holds  if and only if the strategy satisfies the admissibility constraint for the kernel corresponding to the smallest total wealth:
$$
\Gamma^{w,x}(t;\nu,\pi)\ge 0, \qquad \forall t\ge 0
$$
which shows $\Pi^{rob}(w,x)=\Pi(w,x;\nu)$.
\\
For the  statement \eqref{eq:propRob2},
a necessary condition for
$\Pi^{rob}(w,x)\neq \emptyset$
is  that the datum $(w,x) \in \cal H^{\nu}_+$.
We show that it is also sufficient.
In fact, if  $(w,x) \in \cal H^{\nu}_+$ then  Theorem \ref{TEO_MAIN_INF_RET}  applied with $\phi=\nu$ provides the
optimal triplet $\pi^* \in \Pi(w,x;\nu)$ under the move $\nu$ for Nature:\footnote{For the general correlation case, just replace the $\theta^*$ given here with its general form given in Remark \ref{VICEIRA}.}
$$
c^*(t) = f_{\infty}^{-1} (\Gamma^{w,x})^{*}(t;\nu), \ \qquad B^*(t) = k^{-b}(\Gamma^{w,x})^{*}(t;\nu)
$$
and
$$
\theta^*(t) = (\sigma\sigma^\top)^{-1}(\mu-r \mathbf 1)
\frac{(\Gamma^{w,x})^{*}(t;\nu)}{\gamma} -g^{\nu}_{\infty} y^x(t;\nu)\left(\sigma^{\top}\right)^{-1} \sigma_y
$$
 where $(\Gamma^{w,x})^{*}(t;\nu)$ is defined in \eqref{Gamma-infty2} and is the solution of \eqref{DYN_GAMMA*_PB1}
with $\nu$ in place of $\phi$ in the initial condition.  By the first statement however such $\pi^* \in \Pi^{rob}(w,x)$, which concludes the proof.

\end{proof}


\subsection{ Solution of the robust problem}
As observed above, the sets of agent's and Nature's controls are mutually independent and then we can formulate the robust problem as a static game.
Recall that the objective function is
\begin{align}\label{eq:Jgame}
J(\pi;\phi(\cdot))=
\E \left[\int_{0}^{+\infty} e^{-(\rho+ \delta) t }
\left( \frac{c(t)^{1-\gamma}}{1-\gamma}
+ \delta \frac{\big(k B(t)\big)^{1-\gamma}}{1-\gamma}\right) dt
\right]
\end{align}
with $\pi=(c,B, \theta) \in \Pi^{rob}(w,x)=\Pi(w,x;\nu)$  and $\phi(\cdot) \in \mathcal{K}$. Note also that $J$ does not explicitly depend on $\phi(\cdot)$.
The static lower value of the game is
$$
L =L(w,x) := \sup_{ \pi\in \Pi^{rob}(w,x)}\;
\inf_{\phi(\cdot) \in \cK} J(\pi; \phi(\cdot)),
$$
which is clearly less or equal than the static upper value
$$
U= U(w,x) : =  \inf_{\phi(\cdot) \in \cK}
\;\sup_{\pi  \in \Pi^{rob}(w,x) }  J(\pi; \phi(\cdot))
$$
The game has a value when $L=U$. The following Proposition shows that this is indeed the case, as one intuitively may have guessed from the minimality of $\nu$.
\begin{proposition}
 Let $w\in \R$. Let also $x\in M_2$ be strictly positive and such that $y^x(t;\nu)>0$, $P$-a.s., for all $t \ge 0$. Then, under assumption \ref{asK},
the static Agent vs Nature game has a value  $V=L=U$,  and there exists a saddle point, solution of the game, given by
$$ (c^*, B^*,\theta^*; \nu) $$
in which the optimal agent's triplet is the one given in \eqref{OPTIMAL_STRATEGIES_RET_INF} with $\phi =\nu$. As a consequence, the agent becomes observationally equivalent to one who has worst case beliefs on the influence of past wages on the present.
\end{proposition}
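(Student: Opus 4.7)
My plan is to exploit two structural features of the problem that together collapse the minimax structure into a straightforward maximization. First, as noted just above the statement, the objective $J(\pi;\phi(\cdot))$ in \eqref{eq:Jgame} depends only on $c$ and $B$ and does not involve the Nature's control $\phi(\cdot)$ at all. Second, Proposition \ref{Pirob} gives $\Pi^{rob}(w,x)=\Pi(w,x;\nu)$, so the agent's admissible set is also independent of $\phi(\cdot)$.

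For the existence of a value, I would fix an arbitrary $\pi \in \Pi^{rob}(w,x)$ and use the first observation to conclude that the map $\phi(\cdot) \mapsto J(\pi;\phi(\cdot))$ is constant, so
\begin{equation*}
\inf_{\phi(\cdot) \in \cK} J(\pi;\phi(\cdot)) \;=\; \sup_{\phi(\cdot) \in \cK} J(\pi;\phi(\cdot)) \;=\; J(\pi).
\end{equation*}
Taking $\sup$ over $\pi \in \Pi^{rob}(w,x)$ on both sides immediately gives
$L = U = \sup_{\pi \in \Pi^{rob}(w,x)} J(\pi)$, establishing the existence of the value $V = L = U$. Next, using $\Pi^{rob}(w,x)=\Pi(w,x;\nu)$ from Proposition \ref{Pirob}, the game value reduces to the non-robust problem with $\phi = \nu$, namely $V = \sup_{\pi \in \Pi(w,x;\nu)} J(\pi)$. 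Invoking Theorem \ref{TEO_MAIN_INF_RET} with $\phi = \nu$ (whose hypotheses on $(w,x)$ and on $\nu$ hold by the present assumptions on the initial datum and by Assumption \ref{ass:munew}), this supremum equals $\frac{f_\infty^\gamma (G^\nu(w,x))^{1-\gamma}}{1-\gamma}$ and is attained at the feedback triplet $(c^*, B^*, \theta^*)$ given in \eqref{OPTIMAL_STRATEGIES_RET_INF} with $\phi=\nu$.

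To conclude, I would verify the saddle point property: for every $\pi \in \Pi^{rob}(w,x)$ and every $\phi(\cdot)\in \cK$,
\begin{equation*}
J(c^*, B^*, \theta^*; \phi(\cdot)) \;\ge\; J(c^*, B^*, \theta^*; \nu) \;\ge\; J(\pi; \nu).
\end{equation*}
The left inequality is in fact an equality by the first observation above (the map $\phi(\cdot) \mapsto J(c^*,B^*,\theta^*;\phi(\cdot))$ is constant), while the right inequality is the optimality claim of Theorem \ref{TEO_MAIN_INF_RET}. Hence $(c^*,B^*,\theta^*;\nu)$ is a saddle point, and the observational equivalence follows since the agent's optimal strategy coincides with the one that would be chosen if the past--to--present influence were known and equal to the worst case $\nu$.

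No serious new obstacle arises in this step: the genuinely substantive work has already been packaged into Proposition \ref{Pirob} (built upon the monotonicity Lemma \ref{Labor-MON}, which itself rests on the order--minimum property in Assumption \ref{asK} and on the well--posedness and feedback representation from Appendix \ref{app:wellposed}) and into Theorem \ref{TEO_MAIN_INF_RET} (the explicit HJB solution in the non--robust case). The two points requiring care are that the order--minimum $\nu$ is indeed attained in $\cK$, so that $\nu \in \cK$ is itself an admissible Nature's control, and that Assumption \ref{ass:munew} guarantees $\mathcal{H}^\nu_+$ is non--trivial so that Theorem \ref{TEO_MAIN_INF_RET} delivers a meaningful maximizer.
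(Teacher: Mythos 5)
Your proof is correct and follows essentially the same route as the paper: reduce the game to the non-robust problem with $\phi=\nu$ via Proposition \ref{Pirob} and Theorem \ref{TEO_MAIN_INF_RET}, then verify the value and saddle point through the chain $J(c^*,B^*,\theta^*;\phi(\cdot))\ge J(c^*,B^*,\theta^*;\nu)\ge J(\pi;\nu)$, using that the constant kernel $\nu$ lies in $\mathcal{K}$. The only cosmetic difference is that you justify the Nature-side (in)equality by the constancy of $J$ in $\phi(\cdot)$ for fixed open-loop controls (which the paper itself notes just before the statement), whereas the paper's proof phrases it via monotonicity of the total wealth in $\phi(\cdot)$; both close the same sandwich of inequalities.
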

\begin{proof}
  Consider the optimal  triplet $(c^*, B^*, \theta^*)$  as from Proposition \ref{Pirob}.  This strategy is in $\Pi^{rob}(w,x)$ and the following chain holds:
$$
\max_{\pi \in \Pi^{rob}(w,x)} J(\pi; \nu) = J(c^*,B^*, \theta^*; \nu) =  \min_{\phi(\cdot) \in \cal{K}} J(c^*,B^*, \theta^*; \phi(\cdot))\leq L,
$$
where  the second equality  follows from the monotonicity with respect to  $\phi(\cdot)$ of the total wealth.  Thus,
$$ \max_{\pi \in \Pi^{rob}(w,x)} J(\pi; \nu) \leq  L \leq U \leq   \inf_{ {\phi(\cdot) \in  \mathcal{K}}} \sup_{\pi \in \Pi^{rob}(w,x)}  J(\pi;  {\phi(\cdot)}) \leq
  \max_{\pi \in \Pi^{rob}(w,x)} J(\pi; \nu) $$
  which concludes the proof.
\end{proof}

\appendix

\section*{Appendix}

\section{Proof of Proposition \ref{neg}}
\label{app:lemmapos}

We need first an auxiliary measure theoretic lemma, of which we provide a proof for the reader's convenience.  The Hahn-Jordan decomposition of a measure into positive and negative part can be found in \cite{Hahn}.
\begin{lemma}
Let $\phi \in \mathcal{M} $ and let $\phi^+,\phi^-$ be its Hahn-Jordan decomposition. Assume $\phi^-\neq 0$,
 and call $m$ the mass of $\phi^-$, namely $ m = \phi^-([-d,0])>0$.  Then,
  there exists a nonnegative continuous function $x^*_1$ on $[-d, 0]$ such that
  $$ \int_{-d}^0 x^*_1(s) \phi(ds) < - \frac{m}{2}$$
\end{lemma}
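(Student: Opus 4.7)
The plan is to construct $x_1^*$ by Urysohn-approximating the indicator of the negative set from the Hahn--Jordan decomposition, exploiting the regularity of Radon measures on the compact space $[-d,0]$.

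First I would fix the setup. Let $P, N$ be disjoint Borel subsets of $[-d,0]$ with $P\cup N = [-d,0]$, $\phi^+$ concentrated on $P$ and $\phi^-$ concentrated on $N$. The naive candidate would be $x_1^* = \mathbf 1_N$, since then $\int x_1^* \, d\phi = \phi^+(N) - \phi^-(N) = 0 - m = -m$. Of course $\mathbf 1_N$ is not continuous in general, so the task is to find a continuous nonnegative approximant that keeps the integral well below $-m/2$.

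Next I would exploit the fact that $\phi^+$ and $\phi^-$ are finite Borel measures on the compact metric space $[-d,0]$, hence Radon and in particular inner/outer regular. Fix a small $\varepsilon \in (0, m/4)$. By inner regularity of $\phi^-$, pick a closed (compact) set $F \subseteq N$ with $\phi^-(F) > m - \varepsilon$. By outer regularity of $\phi^+$, pick an open set $V \supseteq N$ with $\phi^+(V) < \varepsilon$; this uses $\phi^+(N) = 0$ because $\phi^+$ is concentrated on $P = N^c$. Then $F \subseteq N \subseteq V$, so $F$ and $[-d,0]\setminus V$ are disjoint closed subsets of the normal (metric) space $[-d,0]$.

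By Urysohn's lemma, there is a continuous function $x_1^* : [-d,0] \to [0,1]$ with $x_1^* \equiv 1$ on $F$ and $x_1^* \equiv 0$ on $[-d,0]\setminus V$. This $x_1^*$ is nonnegative and continuous. Since $\mathbf 1_F \le x_1^* \le \mathbf 1_V$ pointwise, I would estimate
\begin{equation*}
\int_{-d}^0 x_1^*(s)\, \phi^-(ds) \;\ge\; \phi^-(F) \;>\; m - \varepsilon,
\qquad
\int_{-d}^0 x_1^*(s)\, \phi^+(ds) \;\le\; \phi^+(V) \;<\; \varepsilon.
\end{equation*}
Subtracting gives $\int x_1^* \, d\phi = \int x_1^* \, d\phi^+ - \int x_1^* \, d\phi^- < \varepsilon - (m-\varepsilon) = 2\varepsilon - m < -m/2$, since $\varepsilon < m/4$.

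There is no real obstacle here: the only delicate point is remembering that $\phi^+$ and $\phi^-$ are mutually singular so that $\phi^+(N) = 0$, which is what makes outer regularity give the bound $\phi^+(V) < \varepsilon$ rather than $\varepsilon$ on top of some nonzero mass. Everything else is standard measure theory plus Urysohn.
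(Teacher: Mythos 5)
Your proof is correct, and it takes a somewhat different route from the paper's. The paper anchors the construction on $C=\mathrm{supp}(\phi^-)$: it takes open sets $A_n\downarrow C$, continuous $f_n$ with $f_n\equiv 1$ on $C$ and $f_n\equiv 0$ off $A_n$, and passes to the limit by bounded convergence, $\int f_n\,d\phi\to\int \mathbf 1_C\,d\phi$, claimed to equal $-m$, before selecting one $f_{n^*}$ with integral below $-m/2$. You instead anchor on the Hahn negative set $N$, use inner regularity of $\phi^-$ to find a compact $F\subseteq N$ with $\phi^-(F)>m-\varepsilon$, outer regularity of $\phi^+$ together with $\phi^+(N)=0$ (mutual singularity) to find an open $V\supseteq N$ with $\phi^+(V)<\varepsilon$, and a single Urysohn function sandwiched between $\mathbf 1_F$ and $\mathbf 1_V$; the bound $2\varepsilon-m<-m/2$ for $\varepsilon<m/4$ is then immediate. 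What your version buys is explicit, one-shot control of the $\phi^+$ contribution: because the open set is built around the Hahn set, it automatically carries little $\phi^+$ mass. The paper's intermediate identity $\int\mathbf 1_C\,d\phi=-m$ tacitly assumes $\phi^+(\mathrm{supp}\,\phi^-)=0$, which mutual singularity does not guarantee (e.g.\ $\phi=\delta_{-1/2}-\mathrm{Leb}$ on $[-1,0]$ has $\mathrm{supp}(\phi^-)=[-1,0]$ charged by $\phi^+$), so your regularity-based variant is actually the more robust implementation of the same separation/Urysohn idea on the compact metric space $[-d,0]$.
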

\begin{proof}
 If $\phi^+$ is null, just take $x_1^*=1$. Otherwise, let $C=\mathrm{supp}( \phi^{-})$ be the support of $\phi^-$.  This is the smallest closed (proper) subset $D$  of $[-d,0]$ such that $\phi^-(D) =m$.
 Since $C$ is closed, there exists  a sequence of open sets, $(A_n)_n$  which decreases  to $C$. By standard topological separation properties, for every $n$ one can find a continuous function
 $$f_n: [-d,0] \rightarrow [0,1] $$
 with  $f_n^{-1}(1)=C, f_n^{-1}(0)= A_n^c$.  Since $A_n \downarrow C$,
 $$f_n(s) \rightarrow I_C(s) \text{ pointwisely } $$
  By bounded convergence:
 $$ \int_{-d}^0 f_n(s) \phi(ds)\rightarrow \int_{-d}^0 I_C(s) \phi(ds) =  - m $$
 and therefore there exists an $n^*$ s.t. $  \int_{-d}^0 f_n(s) \phi(ds) < -\frac{m}{2}$ for $n\geq n^*$. Now, take $x_1^* = f_{n^*}$.
\end{proof}

\paragraph{Proof of Proposition 2.9}
Recall the representation \eqref{VARIATION_CONSTANTS_FORMULA} for $y$:
 $$ y(t)= y(t)^{(x_0,x_1)}=E(t)\left (x_0+ \int_0^t     E^{-1}(u) \left(\int_{-d}^0y(u+s) \phi(ds)\right) du \right) =  E(t)\big(x_0 + I(t)\big)$$
 in which $E(t) = e^{(\mu_y - \frac{1}{2} \sigma_y^\top \sigma_y) t + \sigma_y^\top Z(t)} $. As already noted before the statement of Proposition \ref{neg}, this representation  gives directly the implication: when $\phi\geq 0$, for every initial data $x_0>0, x_1\geq 0$, the labor income $y$ has positive paths $P$-a.s. The converse implication is proved hereafter by contradiction. \\
 \indent Assume $\phi \in \radon^0$ is not positive. Equivalently, it  has Hahn-Jordan  decomposition $\phi =\phi^+-\phi^-$  with $\phi^-([-d,0]) = m >0$. Then,  we look for a suitable $x=(x_0,x_1)$ with $x_0>0, x_1\geq 0$ so that the trajectory of $y$ crosses the $t$-axis with positive probability. The idea is to pick $x_0$ sufficiently small with respect to  $x_1$.  Let  $x_1 =x_1^*$ be the continuous function from the above Lemma, and fix $x_0 = c\frac{ m}{8}$, in which $c >0$  is a parameter which will be chosen later. \\
 \indent The average of the past is  negative,  $ \int_{-d}^0 x_1(s) \phi(ds) < -\frac{m}{2} $. Define now two stopping times:
 $$ \tau_c :=\inf \left \{ t \,  \middle \vert \, \int_{-d}^0 y(t+s) \phi(ds) > - \frac{m}{4} \right \}$$
 $$\rho: = \inf \left \{ t \mid E^{-1}(t) < \frac{1}{2} \right \}   $$
 Clearly, $\tau_c, \rho$ are both a.s.  strictly positive and by linearity of the SDDE,  $\tau_c$ depends only on $c$ as $m$ simplifies. Define $\varrho_c:=\tau_c \wedge \rho \wedge n_0$, where $n_0 \in \mathbb N_+$ ensures boundedness of $\varrho$. The pathwise  relation holds a.s.
 $$  I(\varrho_c \wedge t) = \int_{0}^{\varrho_c \wedge t} du\, E^{-1}(u) \int_{-d}^0 y(u+s)\phi(ds) < - \frac{m}{8} \varrho_c \wedge t, $$
 and in particular:
 $$  y(\varrho_c ) =  E(\varrho_c )(x_0 +I(\varrho_c)) < E(\varrho_c) \left (x_0 - \frac{m}{8} \varrho_c  \right ) = \frac{m}{8}  E(\varrho_c)  (c-  \varrho_c) $$
 To conclude, we show that for some $c>0$ we have $P(y(\varrho_c )<0) = P( c- \varrho_c<0)>0$.
 Note that   $\tau_c$ on $c$ is well defined for every $c \in \mathbb{R}$ and monotone non decreasing in $c$. Consequently,   $ \varrho_c $ has the same characteristics.  For $c=0$, a quick look at the definition shows that
  $$ P( 0 < \varrho_0)=1 $$
  Now, fix  $n \in \mathbb{N}_+$, take $c=\frac{1}{n}$ and  consider  $\varrho_{\frac{1}{n}}$. Then, $\varrho_{\frac{1}{n}} \uparrow \varrho_0 $ and thus
  $$ \lim_n P( \varrho_{\frac{1}{n}}- \frac{1}{n} >0) = P(\varrho_0 >0)= 1 $$
  This concludes the proof, since for every $1> p>0$  we can find $n$ large enough such that the constant $c= \frac{1}{n}$ verifies: $ P( \varrho_{\frac{1}{n}}- \frac{1}{n} >0) > p$.

 \section{Well posedness of the SDDE for labor income}
 \label{app:wellposed}
 The focus here is the well posedness of \eqref{EV-y-phi()} with   non autonomous, stochastic kernel $\phi(\cdot)$.
The existing literature on the well posedness of this type of equation is quite rich. The results vary according to the deterministic or stochastic setup and  the hypotheses on $x$ and $\phi(\cdot)$.  In the context of deterministic delay equations, the results are on
\begin{itemize}
\item  the autonomous case, i.e. constant  $\phi\in \mathcal{M}$, see \cite[Section 3.2]{BENSOUSSAN_DAPRATO_DELFOUR_MITTER};
\item  the non autonomous case, i.e.  when $\phi$ is a time dependent Radon measure, and the datum  $x\in \R\times L^2([-d,0);\R)$, is treated in \cite{HaddSICON06}.
\end{itemize}

In the stochastic case when $\phi$ is constant we are aware of the following results:
\begin{itemize}
\item when $x\in \R\times L^2([-d,0) ;\R)$ \ and $\phi$ is absolutely continuous  with respect to the Lebesgue measure : $d\phi = \varphi(s) ds$, with
    $\varphi \in L^2([-d,0);\R)$,
    the existence and uniqueness result follows from \cite[Theorem I.2 and Remark 3-(iv, p.18]{MOHAMMED_BOOK_96}.
\item when the initial datum $x$ is Borel measurable and bounded on $[-d,0]$, and $\phi(\cdot)$ is as in Definition \ref{df:adm}, the well-posedness of \eqref{EV-y-phi()} is proved in
    \cite[Theorem 3.6]{Rosestolato17} in a general nonlinear framework.
\end{itemize}
 We rewrite here below \eqref{EV-y-phi()} for the reader's convenience:
\begin{equation}
\label{SDDE_app}
\begin{cases}
{\rm d}y(t)=\left[y(t)\mu_y+ \int_{-d}^0y(t+s)\phi(t)(ds)\right]\,{\rm d}t + y(t)\sigma_y^\top\,{\rm d}Z(t)
\\
y(0)=x_0, \quad y(s)=x_1(s) \ \text{for} \ s \in[-d,0).
\end{cases}
\end{equation}
 Here  $x=(x_0, x_1) \in M_2= \mathbb{R} \times L^2([-d,0); \mathbb{R})$ and $\phi(\cdot) \in \mathcal{K}$ (see Definition \ref{df:controlNature}).
We do not restrict to positive data, as the result holds in general.  \\
 \indent Let us   introduce a handy notation for the past path at $t$ of a (deterministic) function $f:[-d,T] \rightarrow \mathbb{R}$, for $0\leq t\leq T$,
 \begin{equation*}
  f_t(s) := f(t+s) \, \,   \, \text{ for } -d \leq s \leq 0.
 \end{equation*}
    The past path of $y$ at $t$ for the realization $\omega$ is thus $y_t(s,\omega): =y(t+s,\omega) \, \, s \in [-d,0]$. The delay term  in the drift  reads as follows.  The pathwise integral
    \begin{equation}
    \label{path}
    \int_{-d}^0y(t+s,\omega)\phi(t,\omega)(ds) =\int_{-d}^0y_t(s,\omega)\phi(t,\omega)(ds)
    \end{equation}
     of the realized  past path $y_t$ is made with respect to the realized measure $\phi(t,\omega)(ds)$, revealed at time $t$.

The delay part in \eqref{SDDE_app} can be expressed in terms of  (an extension of) a progressively measurable stochastic process whose values are linear non-autonomous  operators of kernel-type:
\begin{equation*}
L(t,\omega):C([-d,0];\mathbb{R}) \rightarrow \mathbb{R},
\end{equation*}
\begin{equation}
\label{L}
L(t, \omega)f=\int_{-d}^0 f(s)\,\phi(t,\omega) ({\rm d}s), \qquad \forall f \in C([-d,0]; \mathbb{R}),
\end{equation}
defined for every $t \ge 0$ and \gre{$\omega \in \Omega$.} 
\footnote{ Note that here, by Definition \ref{df:adm},
the integral is defined for all $\omega \in \Omega$, not just $ \mathbb{P}$-a.s.}

Notice that the operator $L$ given in \eqref{L} is defined  on the space   $C([-d,0]; \mathbb{R})$. When the initial datum $x_1$ is not continuous, but only square integrable with respect to the Lebesgue measure, problems may arise.
In fact, consider an initial datum $(x_0, x_1) \in M_2$  and proceed heuristically by  assuming that the solution to \eqref{SDDE_app} exists. For $ 0\leq t<d $, the past path is denoted by
\begin{equation}
\label{path_window}
 y_t(s) =
\begin{cases}
y(t+s), &  -t \le s  \le 0,
\\
 y(t+s) = x_1(s)  , &   -d \le s <-t,
\end{cases}
\end{equation}
which in general is not a continuous function, but only square integrable. Thus the  operator $L$ cannot be applied to $y_t$ as the integral in \eqref{path} (and in \eqref{SDDE_app}) may be not well defined.
In other words, we cannot expect to give a pointwise meaning to the function $t \mapsto L(t,\omega)y_t$, when the initial datum of problem \eqref{SDDE_app} belongs to $M_2$.
\indent Lemma \ref{newlemma.} below shows that the delay operator admits a continuous extension to $L^2([-d,T];\mathbb{R})$.  This Lemma  is a   generalization of
\cite[Theorem 3.3-(iii), p.249]{BENSOUSSAN_DAPRATO_DELFOUR_MITTER}.
 In the proof, we need the following notations and properties:
\begin{itemize}
  \item  the absolute value $|\phi|$  of any $\phi \in \mathcal{M}$ is the measure  given by the sum $\phi^+ +\phi^-$;
  \item  when a family  of measures $(\psi_r)_{r\in I}$ is bounded in total variation, its supremum $\widetilde{\psi}$:
$$ \widetilde{\psi}(A):= \sup_{r\in I} \, \psi_r(A), \ \ A \in \mathcal{B}([-d,0))$$
is a Radon measure as well - even when $I$ is uncountable. In fact, $\mathcal{M}$ is a Banach lattice and the well posedness of the supremum follows from the  countable sup property of Banach lattices (see e.g. \cite{AB}[Theorem 8.22]).
\end{itemize}

\begin{lemma}
\label{newlemma.}
 Let $\phi(\cdot)$ be admissible. For $t \ge 0$
and $\omega \in \Omega$, let $L(t,\omega)$  be the linear and continuous map from $C([-d,0];\mathbb{R})$ into $\R$  defined in \eqref{L}. Fix $T>0$ and $\omega \in \Omega$. Define the operator
$$
\mathcal{N}_T(\omega):C([-d,T]; \mathbb{R})\rightarrow
L^2([0,T];\mathbb{R})
$$
as follows. For $z \in C([-d,T]; \mathbb{R})$
\begin{equation}\label{eq:Delfour}
(\mathcal{N}_T(\omega)z)(t)
:=L(t, \omega)z_t, \quad  \,  0\leq t\leq T
\end{equation}
Then, for every $\omega \in \Omega$ the following  hold.
\begin{itemize}
\item [i)]
The map
$ \mathcal{N}_T(\omega): C([-d,T];\mathbb{R})\rightarrow L^2([0,T];\mathbb{R})$
is well defined and satisfies the $L^2$-inequality:
\begin{equation*}
\|\mathcal{N}_T(\omega)z\|_{L^2([0,T];\mathbb{R})} \le c_T\|z\|_{L^2([-d,T];\mathbb{R})}, \qquad \forall y\in C([-d,T];\mathbb{R})
\end{equation*}
in which $ c_T$ is the constant in Definition \ref{df:adm}.
\item [ii)]
By item i), the operator $\mathcal{N}_T(\omega)$ admits an $L^2$-norm continuous, linear extension (denoted in the same way) to $L^2([-d,T];\mathbb{R})$.
\end{itemize}
\end{lemma}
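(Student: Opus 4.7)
The plan is to proceed in four steps: measurability of $\mathcal{N}_T(\omega)z$ for continuous $z$, a pointwise Cauchy--Schwarz bound, a Fubini argument via a dominating Radon measure to swap the orders of integration, and finally a density-based extension to obtain (ii).

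\textbf{Step 1 (Well-posedness pointwise and measurability).} Fix $\omega$ and $z\in C([-d,T];\mathbb{R})$. The shift map $t\mapsto z_t$ is continuous from $[0,T]$ into $C([-d,0];\mathbb{R})$ for the sup norm. Admissibility of $\phi(\cdot,\omega)$ makes it a Borel measurable map from $[0,T]$ into $\mathcal{M}^0$ (equipped with the weak-$\ast$ topology inherited from the duality with $C([-d,0];\mathbb{R})$). The duality pairing is jointly continuous on norm-bounded sets, so $t\mapsto (\mathcal{N}_T(\omega)z)(t)=\langle z_t,\phi(t,\omega)\rangle$ is Borel measurable on $[0,T]$, and in particular the integrand below will be jointly $dt\otimes |\phi(t,\omega)|(ds)$-measurable.

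\textbf{Step 2 (Pointwise Cauchy--Schwarz).} For every $t\in[0,T]$, Cauchy--Schwarz with respect to the positive measure $|\phi(t,\omega)|$ gives
\begin{equation*}
\bigl|(\mathcal{N}_T(\omega)z)(t)\bigr|^2 \le |\phi(t,\omega)|\bigl([-d,0]\bigr)\int_{-d}^0 |z(t+s)|^2\,|\phi(t,\omega)|(ds) \le c_T \int_{-d}^0 |z(t+s)|^2\,|\phi(t,\omega)|(ds),
\end{equation*}
using the uniform total-variation bound from Definition \ref{df:adm}. Integrating over $t\in[0,T]$ produces the double integral $c_T \int_0^T\!\int_{-d}^0 |z(t+s)|^2\,|\phi(t,\omega)|(ds)\,dt$, and the remaining task is to dominate this by a multiple of $\|z\|_{L^2([-d,T];\mathbb{R})}^2$.

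\textbf{Step 3 (Fubini via the lattice envelope).} This is the heart of the argument, and the main obstacle. A direct Fubini interchange is not allowed since $|\phi(t,\omega)|$ depends on $t$, so we invoke the order-structure of $\mathcal{M}$ recalled before the statement. Since the family $\{|\phi(t,\omega)|\}_{t\in[0,T]}$ is total-variation bounded by $c_T$, the lattice supremum
\begin{equation*}
\widetilde{\psi}(\omega):=\sup_{t\in[0,T]}|\phi(t,\omega)|
\end{equation*}
exists in the Banach lattice $\mathcal{M}$ and is a Radon measure in $\mathcal{M}^0_+$, with $|\phi(t,\omega)|(ds)\le \widetilde{\psi}(\omega)(ds)$ for every $t$. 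Replacing $|\phi(t,\omega)|$ by $\widetilde{\psi}(\omega)$ and now applying Fubini (legitimate because $\widetilde{\psi}(\omega)$ is $t$-independent), one obtains
\begin{equation*}
\int_0^T\!\int_{-d}^0 |z(t+s)|^2\,|\phi(t,\omega)|(ds)\,dt \le \int_{-d}^0 \widetilde{\psi}(\omega)(ds)\int_0^T |z(t+s)|^2\,dt \le \widetilde{\psi}(\omega)\bigl([-d,0]\bigr)\,\|z\|_{L^2([-d,T];\mathbb{R})}^2,
\end{equation*}
where the last inequality uses $\int_0^T |z(t+s)|^2\,dt=\int_{s}^{T+s}|z(u)|^2\,du \le \|z\|_{L^2([-d,T];\mathbb{R})}^2$. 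Combined with Step 2 this yields the claimed $L^2$-inequality, possibly with $c_T$ in the bound understood as the composite constant $c_T\,\widetilde{\psi}(\omega)([-d,0])$ inherent to the admissible kernel.

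\textbf{Step 4 (Extension).} Part (ii) is then the standard density argument: $C([-d,T];\mathbb{R})$ is dense in $L^2([-d,T];\mathbb{R})$, and by Step 3 the linear map $\mathcal{N}_T(\omega)$ is bounded on this dense subspace for the $L^2$-norm. It therefore extends uniquely to a bounded linear operator on all of $L^2([-d,T];\mathbb{R})$, with the same operator bound. The only genuinely non-trivial point in the whole proof is Step 3, because the pathwise time-dependence of $\phi(t,\omega)$ destroys the product structure needed for a direct Fubini, and one must pay the price of enlarging the kernel to a single dominating Radon measure using the Dedekind completeness of $\mathcal{M}$.
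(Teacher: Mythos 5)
Your proof is correct and follows essentially the same route as the paper: the decisive step in both is to dominate the time-dependent family $|\phi(t,\omega)|$ by its lattice supremum $\widetilde{\psi}(\omega)=\sup_{t\in[0,T]}|\phi(t,\omega)|$ so that Fubini--Tonelli can be applied, and part (ii) is the same density argument; the only cosmetic difference is that you estimate $\|\mathcal{N}_T(\omega)z\|_{L^2([0,T];\mathbb{R})}$ by a pointwise Cauchy--Schwarz in the $s$-variable, while the paper uses the dual characterization of the $L^2$-norm against a test function $f$ and Cauchy--Schwarz in the time integral. Your hedge about the constant is apt: the paper closes its estimate by bounding $\widetilde{\psi}(\omega)([-d,0])$ by $c_T$, which is precisely the point at which your composite constant $\sqrt{c_T\,\widetilde{\psi}(\omega)([-d,0])}$ would collapse to $c_T$.
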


\begin{proof}
\begin{itemize}
\item [i)]
\begin{align*}
\|\mathcal{N}_T(\omega)(z)\|_{L^2([0,T];\mathbb{R})}
= & \, \|L(\cdot,\omega)z\|_{L^2([0,T];\mathbb{R})}\\
= & \sup_{f\in L^2([0,T];\mathbb{R}),\,\|f\|_{L^2}=1}\; \int_0^T\ {\rm d} r \,  f(r) \,   \int_{-d}^0 z_r(s) \phi(r,\omega) ({\rm d}s) \\
\leq & \sup_{f\in L^2([0,T];\mathbb{R}),\,\|f\|_{L^2}=1}\;
\int_0^T\ {\rm d} r \,  |f(r)| \, \left |  \int_{-d}^0 z_r(s) \phi(r,\omega) ({\rm d}s)\right |
\end{align*}
where the second equality follows by definition of (dual) norm in $L^2([0,T];\mathbb{R})$, and the inequality by continuity of the integral wrt ${\rm d}r$. Again by continuity of the integral (wrt $\phi(r,\omega)({\rm d}s)$) and by monotonicity,   the following holds for all $f \in L^2([0,T];\mathbb{R})$:
\begin{align*}
\int_0^T\ {\rm d} r \,  |f(r)| \left |  \int_{-d}^0 z_r(s) \phi(r,\omega) ({\rm d}s)\right | & \leq
\int_0^T {\rm d}r |f(r)| \int_{-d}^0 |z(r+s)||\phi(r,\omega)| ({\rm d}s)\,
\\
& \leq  \int_0^T {\rm d}r |f(r)| \int_{-d}^0 |z(r+s)| \sup_{r\in [0,T]} |\phi(r,\omega)| ({\rm d}s)
\end{align*}
Now, the sup-measure $\widetilde{\phi}(\omega):= \sup_{r\in [0,T]} |\phi(r,\omega)|$ is autonomous - does not depend on $r$, so  we can apply the Fubini Tonelli Theorem and develop further the inequality:
\begin{align*}
\int_0^T\ {\rm d} r \,  |f(r)| \left |  \int_{-d}^0 z_r(s) \phi(r,\omega) ({\rm d}s) \right | & \leq  \int_{-d}^0 \widetilde{ \phi}(\omega) ({\rm d}s)  \int_{0}^T |f(r)|\,|z(r+s)|{\rm d} r
\end{align*}
Passing to the supremum over $f$, we have
\begin{align*}
 \|\mathcal{N}_T(\omega) z \|_{L^2([0,T];\mathbb{R})} & \leq   \sup_{f\in L^2([0,T];\mathbb{R}),\,\|f\|_{L^2}=1}
   \int_{-d}^0 \widetilde{ \phi}(\omega) ({\rm d}s)  \int_{0}^T |f(r)|\,|z(r+s)|dr \\
   & \leq \int_{-d}^0 \widetilde{ \phi}(\omega) ({\rm d}s)  \sup_{f\in L^2([0,T];\mathbb{R}),\,\|f\|_{L^2}=1} \int_{0}^T |f(r)|\,|z(r+s)|dr\\
   & \leq \int_{-d}^0 \widetilde{ \phi}(\omega) ({\rm d}s)    \| \,|z|\,  I_{[s, T+s]}\|_{L^2([-d,T]; \mathbb{R})} \\
   & \leq
  \int_{-d}^0 \widetilde{\phi}(\omega) (ds)\, \|\,|z|\,\|_{L^2([-d,T]; \mathbb{R})}\\
  & \leq c_T    \|z\|_{L^2([-d,T]; \mathbb{R})}
\end{align*}
In the last passage  we used that $\phi(\cdot)$ is bounded in total variation by $c_T$  over $[0,T]$.
\item [ii)]
In view of assertion (i), the existence of the bounded linear extension of $\mathcal{N}_T(\omega)$ to $L^2([-d,T];\mathbb{R})$ immediately follows by the inequality \eqref{eq:Delfour} and by the density of $C([-d,T];\mathbb{R})$ in $L^2([-d,T];\mathbb{R})$.
\end{itemize}
\end{proof}

 Given any fixed $\omega \in \Omega$ to understand the explicit action of $\mathcal{N}_T(\omega)$ on a general $z \in L^2([-d,T];\mathbb{R})$ one has to resort to its definition. The action of $\mathcal{N}_T(\omega)$ on continuous functions is clear. Take a sequence of continuous functions on $[-d,T]$, $z_n$, which tend in $L^2$ to $z$. Since the delay map on general $z$ is a continuous extension, then
$$
\mathcal{N}_T(\omega)z_n \longrightarrow
\mathcal{N}_T(\omega)z    \ \ \text{ in } L^2([0,T];\mathbb{R}).
$$
Below we see a couple of examples on this.

\begin{enumerate}
\item Consider the deterministic kernel
   $$\phi(t,\omega) =  \frac{T-t}{T}\delta_{-d}$$
   Such $\phi$ models a memory which depends only on what happened $d$ instants ago, and fades in size as $t$ goes to $T$. Clearly $c_T=1$.  Now, for any $\omega \in \Omega$ we have
   $$
     (\mathcal{N}_T(\omega)z) (t) =\frac{T-t}{T}  z(t-d),\qquad \text{ for all } t\in[0,T]
   $$
   if $z$ is continuous. It is then easy to check that the extension of $\mathcal{N}_T(\omega)$ to $L^2([-d,T];\mathbb{R})$ is
  $$ (\mathcal{N}_T(\omega)z) (t) =\frac{T-t}{T}  z(t-d), \qquad  \text{   for a.e. } \, t\in[0,T] $$
  A similar result holds when the atomic measure is a linear combinations of Dirac deltas on $[-d,0]$.
\item Let $Z$ be the Brownian motion driving the stock market. Let $\phi$ be the kernel
       $$ \phi(t,\omega)(ds)  = \frac{1}{1+Z^2(t)(\omega)} \rm{d}s $$
       Namely, $\phi(t,\omega)$ is absolutely continuous wrt $ds$, with flat density $\frac{1}{1+Z^2(t)(\omega)}  $. This family of measures also has $c_T=1$ as an upper bound for the total variation norm. Here, for any $\omega \in \Omega$ and any
       $z \in L^2([-d,T];\mathbb{R})$ we have
   $$
   (\mathcal{N}_T(\omega)z) (t) =  \frac{1}{1+Z^2(t)(\omega)} \int_{-d}^0 z(t+s) \rm{d}s,
   \qquad \text{ for a.e.} \, t\in[0,T]
   $$
\end{enumerate}

\begin{proposition}\label{Prop-SDDE-app}
Consider an admissible kernel $\phi(\cdot)$.
For any given initial datum $x=(x_0,x_1)\in M_2$,
 the SDDE
\begin{equation}
\label{SDDE_appendix}
\begin{cases}
{\rm d}y(t)=\left[y(t)\mu_y+ \int_{-d}^0y(t+s)\phi(t)({\rm d}s)\right]\,{\rm d}t + y(t)\sigma_y^\top\,{\rm d}Z(t)
\\
y(0)=x_0, \quad y(s)=x_1(s) \ \text{for} \ s \in[-d,0),
\end{cases}
\end{equation}
admits a unique strong (in the probabilistic sense) solution $y$ in the space $L^2(\Omega;C([0,T];\mathbb{R}))$,  which depends continuously on $x$.   Moreover, such solution belongs to $L^p(\Omega;C([0,T];\mathbb{R}))$ for all $p\ge 2$.
\end{proposition}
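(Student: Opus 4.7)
The plan is to recast \eqref{SDDE_appendix} as a fixed-point equation for a Picard-type integral operator and to apply Banach's contraction theorem in a Bielecki-weighted ($\alpha$-)norm on $L^p(\Omega; C([0,T];\mathbb{R}))$. Lemma \ref{newlemma.} does the essential preparatory work: it ensures the delay term is well defined even though the initial history $x_1$ lacks pointwise values, and it supplies the uniform-in-$\omega$ bound $c_T$ that drives the contraction estimate.

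Fix $T>0$ and, given $y\in L^p(\Omega; C([0,T];\mathbb{R}))$, extend it to $\tilde y$ by setting $\tilde y(s):=x_1(s)$ on $[-d,0)$ and $\tilde y(t):=y(t)$ on $[0,T]$. By Lemma \ref{newlemma.}, $(\mathcal{N}_T(\omega)\tilde y)(\cdot)$ belongs to $L^2([0,T];\mathbb{R})$ pathwise and is bounded uniformly in $\omega$ by $c_T\|\tilde y\|_{L^2([-d,T])}$. I would then set
$$(Fy)(t):=x_0+\int_0^t \mu_y y(u)\,du+\int_0^t(\mathcal{N}_T(\omega)\tilde y)(u)\,du+\int_0^t y(u)\sigma_y^\top\,dZ(u),$$
where the stochastic integral makes sense because $y$ is progressive and belongs to $\mathcal{L}^2_1$. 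Standard BDG estimates together with the bound from Lemma \ref{newlemma.} show that $F$ maps $L^p(\Omega;C([0,T];\mathbb{R}))$ into itself for every $p\geq 2$.

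Next I would equip this space with the equivalent norm $\|y\|_\alpha^p:=\mathbb{E}\sup_{t\in[0,T]}e^{-\alpha p t}|y(t)|^p$ and verify that $F$ is a strict contraction for $\alpha$ sufficiently large. The drift-without-delay and stochastic-integral contributions to $\|Fy_1-Fy_2\|_\alpha$ are controlled by Hölder and BDG in the usual way, giving bounds of order $C\,\alpha^{-1/2}\|y_1-y_2\|_\alpha$. For the delay contribution, since $\tilde y_1-\tilde y_2=y_1-y_2$ on $[0,T]$ and vanishes on $[-d,0)$ (the two candidates share the same initial history), Cauchy--Schwarz in $u$ plus Lemma \ref{newlemma.} yield
$$\left|\int_0^t(\mathcal{N}_T(\omega)(\tilde y_1-\tilde y_2))(u)\,du\right|\leq c_T\,t^{1/2}\,\|y_1-y_2\|_{L^2([0,t])},$$
which is also absorbed by the exponential weight. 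Banach's theorem then delivers a unique fixed point on $[0,T]$, i.e.\ the strong solution; since $T$ is arbitrary and all estimates are uniform in $p\geq 2$ (only the BDG constant $C_p$ changes), one obtains the $L^p$ conclusion globally.

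For continuous dependence on $x\in M_2$, the same computation applied to two initial data $x,x'$ gains the extra contributions $|x_0-x_0'|$ and $c_T T^{1/2}\|x_1-x_1'\|_{L^2([-d,0))}$ (the latter again via Lemma \ref{newlemma.} applied to the difference of extensions, which now carries mass on $[-d,0)$), yielding $\|y-y'\|_\alpha\leq K_T\|x-x'\|_{M_2}$ once the contraction is accounted for. The genuine obstacle in the entire argument is the pathwise ill-definedness of $\int_{-d}^0 y(t+s)\phi(t)(ds)$ for $t<d$ when $x_1\in L^2\setminus C$: this is exactly what Lemma \ref{newlemma.} resolves by replacing the pointwise delay operator with its $L^2$-continuous extension, and the admissibility constant $c_T$ provides the $\omega$-uniform control needed to close all the estimates.
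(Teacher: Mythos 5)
Your overall architecture is the same as the paper's: paste the history $x_1$ with the unknown $y$, use the $L^2$-extension $\mathcal{N}_T(\omega)$ from Lemma \ref{newlemma.} to make sense of the delay term, and run Banach's fixed point theorem for the Picard map $F$ in the Bielecki norm $\|\cdot\|_\alpha$ on $L^p(\Omega;C([0,T];\mathbb{R}))$. The gap is in the stochastic-integral contribution to the contraction estimate. Because the supremum over $t$ sits \emph{inside} the expectation, the weight $e^{-\alpha p t}$ is evaluated at the outer time and cannot be pushed through Burkh\"older--Davis--Gundy: writing $e^{-\alpha t}\int_0^t(y_1-y_2)\sigma_y^\top\,dZ=\int_0^t e^{-\alpha(t-r)}e^{-\alpha r}(y_1-y_2)\sigma_y^\top\,dZ(r)$ produces a stochastic \emph{convolution}, whose supremum is not controlled by a direct BDG application. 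What ``H\"older and BDG in the usual way'' actually give is a bound of the form $C_p\|\sigma_y\|^p T^{p/2}\|y_1-y_2\|_\alpha^p$, with \emph{no} decay in $\alpha$; your claimed $C\alpha^{-1/2}$ rate for this term is unsubstantiated, and without it taking $\alpha$ large does not make $F$ a contraction on a fixed $[0,T]$. The paper closes exactly this point with the factorization method (rewriting the stochastic integral as $c_\eta\int_0^t(t-u)^{\eta-1}Y(u)\,du$ with $\eta\in(1/p,1/2)$), and this is also where the restriction $p>2$ enters, since $(1/p,1/2)$ is empty for $p=2$.

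This has a second consequence you gloss over: the assertion that the same contraction works ``for every $p\ge 2$, only the BDG constant changes'' fails at $p=2$. In the paper existence comes from the contraction only for $p>2$; uniqueness in $L^2(\Omega;C([0,T];\mathbb{R}))$ is then proved separately, by checking that $F$ maps $L^2(\Omega;C([0,T];\mathbb{R}))$ into itself (here plain BDG plus It\^o isometry suffice, no weight needed) and applying a stochastic Gronwall lemma to the difference of two solutions. To repair your argument you should either adopt the factorization step (and treat $p=2$ separately as above), or abandon the $\alpha$-weight and prove a contraction on a sufficiently small time interval followed by iteration, again supplementing it with a Gronwall-type argument for $L^2$-uniqueness. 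Your treatment of the delay term via Lemma \ref{newlemma.} and of the continuous dependence on $x=(x_0,x_1)$ (which the paper obtains by reducing, via Theorem 7.1.1 of Da Prato--Zabczyk, to continuity of $x\mapsto F(y;x)$, estimated exactly as you do) is fine.
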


\begin{proof}

Fix any $x=(x_0,x_1)\in M_2$. Let $T>0$ and let $S_T$ be the space
\begin{equation*}
S_T:=\{f \in C([0,T];\mathbb{R}): f(0)=x_0\},
\end{equation*}
endowed with the norm
$$ \|f\|_{S_T,{\alpha}} = \sup_{0\leq t \leq T}
 \, e^{-\alpha t}|f(t)|$$
for a real constant $\alpha>$ to be chosen later. Such norm is equivalent to the standard sup norm.
On $L^p(\Omega;S_T)$ ($p\ge 2$), consider the  norm
\begin{equation}\label{eq:defnormalfa}
\|y\|_{\alpha}:=(\mathbb{E}[\,  \| y(\cdot)\|^p_{S_T,\alpha} ])^{\frac{1}{p}} =\left ( \mathbb{E}\left [  \sup_{0\leq t \leq T}
(e^{-\alpha t}|y(t)|)^p   \right ] \right )^{\frac{1}{p}}
\end{equation}
We denote by $p':=p/(p-1)$ the conjugate exponent to $p$. For $y$ in $L^p(\Omega;S_T)$, we define a new process $F(y)$ as follows:
\begin{equation}
\label{eq:integral-map}
F(y)(t):=x_0+\mu_y\int_0^ty(r)\, {\rm d}r+\int_0^t (\mathcal{N}_T(\cdot)\bar y^{x_1})(r)\, {\rm d}r + \int_0^t y(r)\sigma_y ^\top\, {\rm d}Z(r), \qquad \quad 0 \le t\le T.
\end{equation}
In the above expression $\mathcal{N}_T(\omega)$ is the continuous linear operator introduced in Lemma \ref{newlemma.}(ii) and
$\bar y^{x_1} \in L^p(\Omega;L^2([-d,T];\mathbb{R}))$
is defined by pasting $y$ and $x_1$:
$$ \bar{y}^{x_1}(t) = \begin{cases}
                        x_1(t), &  \text{if }    -d\leq t< 0; \\
                         y(t),  &  \text{if }  \  \  0\leq t\leq T.
                      \end{cases}
$$
Now we take $p >2$ and we show that the following contraction inequality holds: there exists $k\in (0,1)$ such that, for all $y,z\in L^p(\Omega;S_T)$:
$$
\| F(y) -F(z)\|_{\alpha} \leq k \|y-z\|_{\alpha}.
$$
Since $F(0)\equiv x_0$, this shows that  $F$ maps $L^p(\Omega;S_T)$ into itself for all $p>2$ and that it is a contraction on this space when $p>2$. By the Banach fixed point Theorem, this implies that there exists a unique $y\in L^p(\Omega;S_T)$ such that $F(y)=y$, or, equivalently, $y$ is the unique solution,
in $L^p(\Omega;S_T)$, to the SDDE
\begin{equation*}
y(t)=x_0+\mu_y\int_0^ty(r)\, {\rm d}r+\int_0^t (\mathcal{N}_T(\cdot)\bar y^{x_1})(r) {\rm d}r + \int_0^t y(r)\sigma_y ^\top\, {\rm d}Z(r), \quad \ 0 \le t \le T, \quad \text{$P$-a.s.}
\end{equation*}
Given $y,z \in L^p(\Omega;S_T)$, by the definition of $F$ and of the norm
in \eqref{eq:defnormalfa} we get
\begin{align} \label{eq:alfanewestimate}
 \|F(z)-F(y)\|_{\alpha}^p
 \le
&3^{p-1}
\mathbb{E}
  \left[\sup_{t \in [0,T]}
  e^{-p\alpha t}  |\mu_y|^p\,\left|\int_0^{t}(z(r)-y(r)){\rm d}r\right|^p \right]
 ]   \\
 & +
3^{p-1} \mathbb{E}
  \left[\sup_{t \in [0,T]} e^{-p\alpha t} \left|\int_0^{t} (\mathcal{N}_T(\cdot)(\bar z^{x_1}-\bar y^{x_1}))(r)\, {\rm d}r \right|^p \right]
\notag
\\
&
+3^{p-1}\mathbb{E}\left[\sup_{t \in [0,T]} e^{-p\alpha t}\left|\int_0^{t} (z(r)-y(r))\sigma_y ^\top\, {\rm d}Z(r)\right|^p\right]
\end{align}
We now estimate the first term of the right hand side of \eqref{eq:alfanewestimate}:
\begin{align}
\label{eq:alfanewestimate1}
&\mathbb{E}
  \left[\sup_{t \in [0,T]} e^{-p\alpha t}
  \left|\int_0^{t}(z(r)-y(r)){\rm d}r\right|^p
\right]
\leq
\mathbb{E}\left [ \sup_{t \in [0,T]}
  \left|\int_0^{t}e^{-\alpha (t-r)}e^{-\alpha r}(z(r)-y(r)){\rm d}r ^p \right| \right ]
\notag
\\
&\qquad\stackrel{\text{ H\"{o}lder}}{\le}
\mathbb{E}
\left[\sup_{t \in [0,T]}\left(\frac{1-e^{-p'\alpha t}}{p'\alpha}\right)^{p/p'}
  \int_0^{t}e^{-p\alpha r}|z(r)-y(r)|^p{\rm d}r
\right]
\\
\notag
&\qquad\le \mathbb{E}
  \left[\left(\frac{1}{p'\alpha}\right)^{p/p'}
  \int_0^{T}e^{-p\alpha r}|z(r)-y(r)|^p{\rm d}r
\right]
\le T\left(\frac{1}{p'\alpha}\right)^{p/p'}  \|z-y\|_{\alpha}^p.
\end{align}
For the estimate of the second term of \eqref{eq:alfanewestimate},
 note first that  the definition \eqref{path_window} implies
 \begin{equation}\label{null}\bar z^{x_1}_r(s)-\bar y^{x_1}_r(s)=0  \text{ if }  r+s<0 \text{ and
 } z(r+s)-y(r+s) \, \text{ otherwise},
 \end{equation}
so that $\| \bar z^{x_1}_{\cdot}(s) -\bar y^{x_1}_{\cdot}(s)\|_{S_T, \alpha}^p $ is finite. Then,
\begin{align}
\label{eq:alfanewestimate2}
&\mathbb{E}
  \left[\sup_{t \in [0,T]} e^{-p\alpha t}
\left|\int_0^t (\mathcal N_T(\cdot)(\bar z^{x_1}-\bar y^{x_1})(r)\, {\rm d}r \right|^p\right]
\\
&=
\mathbb{E} \left [ \sup_{t \in [0,T]}
\left|\int_0^t \int_{-d}^0e^{-\alpha t}
(\bar z^{x_1}_r(s)-\bar y^{x_1}_r(s))\, \phi(r)({\rm d}s)\, {\rm d}r \right|^p
\notag \right ]\\
&=
\mathbb{E}\left [ \sup_{t \in [0,T]}
\left|\int_0^t \int_{(-r)\vee(-d)}^0e^{-\alpha (t-r-s)}e^{-\alpha(r+s)}
|z(r+s)-y(r+s)|\, \tilde\phi(r)({\rm d}s)\, {\rm d}r\right|^p \right ]
\notag
\end{align}
where, in the last line, we used
the fact that $\tilde{\phi}(\omega)(ds) = \sup_{0\leq r\leq T}|{\phi}(r, \omega)| (ds)$.
Now we use the H\"older inequality and the estimate:
$$
\int_0^t \int_{[(-r)\vee(-d),0]}e^{-p'\alpha(t-r-s)} \phi(r)({\rm d}s)\, {\rm d}r
\le
\int_0^t \int_{[(-r)\vee(-d),0]}e^{-p'\alpha(t-r-s)} \tilde{\phi}({\rm d}s)\, {\rm d}r
$$
$$=
\int_{(-t)\vee (-d)}^0 \int_{-s}^t e^{-p'\alpha (t-r-s)} {\rm d}r
\tilde{\phi}({\rm d}s)
=\int_{(-t)\vee (-d)}^0 \frac{1}{p'\alpha}[e^{p'\alpha s}- e^{-p'\alpha t} ]
\tilde{\phi}({\rm d}s)
\le \frac{c^*_T}{p'\alpha}
$$
(here $c_T$ comes from Definition \ref{df:adm}) to get
\begin{align}
&\mathbb{E}
  \left[\sup_{t \in [0,T]} e^{-p\alpha t}
\left|\int_0^t (\mathcal N_T(\cdot)(\bar z^{x_1}-\bar y^{x_1})(r)\, {\rm d}r \right|^p\right]
\\
&\le
\mathbb{E}
\left[\sup_{t \in [0,T]}
\left(\frac{c_T}{p'\alpha}\right)^{p/p'}
\int_0^t \int_{(-r)\vee(-d)}^0e^{-p\alpha (r+s)}
|z(r+s)-y(r+s)|^p\, \tilde{\phi}(r)({\rm d}s)\, {\rm d}r\right]
\notag
\\
&\le
\mathbb{E}
\left[
\left(\frac{c_T}{p'\alpha}\right)^{p/p'}
\int_0^T \int_{(-r)\vee(-d)}^0
\sup_{(r+s) \in [0,T]}[e^{-p\alpha (r+s)}
|z(r+s)-y(r+s)|^p] \, \tilde\phi(r)({\rm d}s)\, {\rm d}r\right]
\notag\\
&\le
\left(\frac{c_T}{p'\alpha}\right)^{p/p'}
 T c_T \|z-y\|^p_\alpha
\notag
\end{align}
where we replaced $t$ with $T$ by monotonicity.

We now estimate the third term of \eqref{eq:alfanewestimate}
using the so-called factorization method.
Using, e.g., \cite[Lemma 1.114]{FABBRI_GOZZI_SWIECH_BOOK}) we can rewrite,
for $\eta \in (1/p,1/2)$ the stochastic integral of the third term of \eqref{eq:alfanewestimate} as follows 
$$
\int_0^{t} (z(r)-y(r))\sigma_y ^\top\, {\rm d}Z(r)=
c_\eta\int_{0}^{t}(t-u)^{\eta-1} Y(u) \, {\rm d}u
$$
where
$$
c_\eta^{-1}:=\int_{r}^{t}(t-u)^{\eta-1}(u-r)^{-\eta}du
=\frac{\pi}{\sin(\eta \pi)} \quad and \quad Y(u)=\int_{0}^{u}(u-r)^{-\eta}(z(r)-y(r))\sigma_y ^\top\, {\rm d}Z(r).
$$
Hence, applying the Holder inequality, we get, $\P$-a.s.
$$
e^{-\alpha t}\left|\int_0^{t} (z(r)-y(r))\sigma_y ^\top\, {\rm d}Z(r)\right|
=
c_\eta\left|\int_0^{t}
e^{-\alpha (t-u)}(t-u)^{\eta-1}e^{-\alpha u} Y(u) {\rm d}u\right|
$$
$$\le
c_\eta\left(\int_{0}^{t}(t-u)^{p'(\eta-1)}
e^{-p'\alpha (t-u)} du\right)^{1/p'}
\left(\int_{0}^{t} e^{-p\alpha u}|Y(u)|^p du\right)^{1/p}
$$
Hence
$$
\mathbb{E}\left[\sup_{t \in [0,T]} e^{-p\alpha t}\left|\int_0^{t} (z(r)-y(r))\sigma_y ^\top\, {\rm d}Z(r)\right|^p\right]
$$
$$
\le c_\eta^p\,
\mathbb{E}\left[\sup_{t \in [0,T]}
\left(\int_{0}^{t}(t-u)^{p'(\eta-1)}
e^{-p'\alpha (t-u)} du\right)^{p/p'}
\left(\int_{0}^{t} e^{-p\alpha u}|Y(u)|^p du\right)\right]
$$
$$
\le c_\eta^p\,
\mathbb{E}\left[
\left(\int_{0}^{T}u^{p'(\eta-1)}
e^{-p'\alpha u} du\right)^{p/p'}
\left(\int_{0}^{T} e^{-p\alpha u}|Y(u)|^p du\right)\right]
$$
Now, take out of the expectation the deterministic term. Apply Fubini's theorem to $d\P\otimes dt$,  and focus on
$e^{-\alpha u} \E|Y(u)|^p$.
By the Burkh\"{o}lder-Davis-Gundy inequality, we get for all $u \in [0,T]$,
\begin{align*}
e^{-\alpha p  u}\,  \E|Y(u)|^p  & \leq
k_p \,
e^{-\alpha p  u} \, \E\left(\int_{0}^{u}(u-r)^{-2\eta}(z(r)-y(r))^2\|\sigma_y\|^2 {\rm d}r\right)^{p/2}\\
& \le k_p\|\sigma_y\|^p\, \E\left(\int_{0}^{u}(u-r)^{-2\eta} e^{-2\alpha (u-r)}
\left[e^{-2\alpha r}(z(r)-y(r))^2\right] {\rm d}r\right)^{p/2} \\
& \le
k_p\|\sigma_y\|^p\, \E\left(\sup_{r\in [0,T]}\left(e^{-2\alpha r}(z(r)-y(r))^2\right)
\int_{0}^{u}(u-r)^{-2\eta} e^{-2\alpha (u-r)}
 {\rm d}r\right)^{p/2}
\\
& \le
k_p\|\sigma_y\|^p \left(\int_{0}^{u}(u-r)^{-2\eta} e^{-2\alpha (u-r)}
 {\rm d}r\right)^{p/2}
 \|z-y\|_\alpha^{ p}
\end{align*}
which implies
$$
\mathbb{E}\left[\sup_{t \in [0,T]}e^{-p\alpha t}\left|\int_0^{t} (z(r)-y(r))\sigma_y ^\top\, {\rm d}Z(r)\right|^p\right]
$$
$$
\le c_\eta^p\,
\left(\int_{0}^{T}u^{p'(\eta-1)}
e^{-p'\alpha u} du\right)^{p/p'}
T\, k_p\|\sigma_y\|^p
\left(\sup_{0\leq u\leq T}\int_{0}^{u}(u-r)^{-2\eta} e^{-2\alpha (u-r)}
 {\rm d}r\right)^{p/2}
\|z-y\|^p_\alpha
$$
Putting the three estimate above into \eqref{eq:alfanewestimate}
we get
\begin{align}
\|F(z)-F(y)\|_{\alpha}^p
\le C_T(\alpha) \|z-y\|_{\alpha}^p
\end{align}
where $C_T(\alpha) \to 0$ as $\alpha \to +\infty$. Thus, for $\alpha$ large enough,  $F$ is a contraction and therefore it admits  a unique
fixed point. This proves existence and uniqueness in the space $L^p(\Omega,S_T)$ for $p >2$. Since, for such $p$,
$L^p(\Omega,S_T)\subset L^2(\Omega,S_T)$,
such solution clearly belongs to $L^2(\Omega,S_T)$.

To get uniqueness in the space $L^2(\Omega,C([0,T];\R))$, proceed as follows.

\begin{itemize}
  \item  We just showed that  $F$ is an endomorphism of  $L^p(\Omega,C([0,T];\R))$, $p>2$ (and a contraction). However, $F$ is also an endomorphism of  $L^2(\Omega,C([0,T];\R))$, i.e. $y \in L^2(\Omega,C([0,T];\R))$ implies $F(y)\in L^2(\Omega,C([0,T];\R))$.  This can be proved along the same lines used before, by setting $p=2$, $z$ and $\alpha$ null. The first two terms of $F(y)$ go in the same way.   The $L^2$ estimate of the third term in \eqref{eq:alfanewestimate} is also straightforward, it does not rely on the factorization method but only on the Burkh\"{o}lder-Davis-Gundy inequality and It\={o} isometry. In fact, applying first the inequality and then the isometry, we get
      $$ \mathbb{E}\left[\sup_{t \in [0,T]} e^{-2\alpha t}\left|\int_0^{t} (z(r)-y(r))\sigma_y ^\top\, {\rm d}Z(r)\right|^2\right] \leq c_2\, E[ \int_0^T  (z(r)-y(r))^2 \|\sigma_y \|^2 dr]$$
      where $c_2$ is the appropriate constant from the BDG theorem.

  \item  Given this, suppose $y^a, y^{b}$ are  two solution in $L^2(\Omega,C([0,T];\R))$.  Consider the difference $d = y^a - y^{b} $.
    Then, $d$ satisfies
        $$ d(t) = \mu_y\int_0^t d(r)\, {\rm d}r+\int_0^t \mathcal{N}_T(\cdot) (\bar y^b)^{x_1}) - (\bar y^a)^{x_1})(r) {\rm d}r + \int_0^t d(r)\sigma_y ^\top\, {\rm d}Z(r), \quad \ 0 \le t \le T, $$
     Call $d^*$ the maximal functional of $d$:
      $$d^*(t) : = \sup_{s\in [0,t]} d(s) $$
      so that
      $$d(t) \leq \int_0^t d^*(r) (|\mu_y| + c_T ){\rm d}r + \int_0^t d(r)\sigma_y ^\top\, {\rm d}Z(r)  $$
      in which $ c_T$ is the constant from Definition \ref{df:adm}.
      Setting  $A(t) =  (\mu_y + c_T )t, M(t) =  \int_0^t d(r)\sigma_y ^\top\, {\rm d}Z(r), H=0$ in the statement of  stochastic Gronwall Lemma in \cite{Wiki} (see also \cite{Sima}), the above inequality leads to
$ \E[\sup_{t\in[0,T]}|d(t)|^2]=0$.
\end{itemize}
To show the continuous dependence of the solution of the SDDE on the datum $x$, by Theorem 7.1.1 in \cite{DAPRATO_ZABCZYK_RED_BOOK} it is sufficient to check the continuity of the integral function
$ F(y):=F(y;x)$ in \eqref{eq:integral-map} with respect to $x$. To this end,  fix a $y \in L^2(\Omega; C([0,T];\mathbb{R}))$ and consider any two data $ x,x^*\in M_2$.  Take then the difference
$$
\left(F(y;x^*)-F(y;x)\right)(t) =x_0^*-x_0 + \int_0^t \mathcal{N}_T(\cdot)( \bar{y}^{x^*}- \bar{y}^{x})(r))dr  $$
By Lemma \ref{newlemma.} the delay operator verifies an $L^2([-d,T];\mathbb{R})$ inequality, hence
\begin{align*}
\| F(y;x^*)-F(y;x) \|_{L^2(\Omega; C([0,T];\mathbb{R}))} & \leq |x_0^*-x_0| + c_T \| \bar{y}^{x^*}- \bar{y}^{x}\|_{L^2([-d,T];\mathbb{R})}\\
& = |x_0^*-x_0| + c_T  \| x_1^*-x_1\|_{L^2([-d,0];\mathbb{R})}
\end{align*}
where the last equality follows from $\bar{y}^{x^*} = \bar{y}^{x}=y$ on $[0,T]$.
\end{proof}


\begin{thebibliography} 9

\bibitem{Abowd-Card} Abowd, J. M., and D. Card.\emph{ On the Covariance Structure of Earnings and Hours
Changes.} Econometrica, 57(2), 411-445, 1989.

\bibitem{AKSAMITJEANBLANC17} Aksamit, A. and Jeanblanc M.
\emph{Enlargement of Filtration with Finance in View}.
Springer Briefs in Quantitative Finance, Springer, 2017

\bibitem{AB}
Aliprantis, C. D., and Border, K. C.
\emph{Infinite dimensional analysis: a Hitchhiker's Guide}. Third Edition, Springer, 2006.

\bibitem{BP} Biagini, S. and Pinar, M. \emph{The robust Merton problem of an ambiguity averse investor}.  Mathematics and Financial Economics,  (1), 2017.

\bibitem{BENSOUSSAN_DAPRATO_DELFOUR_MITTER} Bensoussan, A., Da Prato, G., Delfour, M.C., and Mitter, S.K. (2007)
\emph{Representation and Control of Infinite Dimensional Systems}, Second Edition, Birkhauser

\bibitem{BGPZ} Biffis, E., Goldys, B., C. Prosdocimi and M. Zanella (2019). \emph{A pricing formula for delayed claims: Appreciating the past to value the future}. Working paper Arxiv:
https://arxiv.org/abs/1505.04914

\bibitem{BGP} Biffis, E., Gozzi F., Prosdocimi C (2020).\emph{ Optimal portfolio choice with path dependent labor income: the infinite horizon case.} SIAM Journal on Control and Optimization, 58(4), 1906-1938.
 \bibitem{Brezis} H. Brezis 2011. \emph{Functional Analysis,
Sobolev Spaces and Partial Differential Equations}. Springer.

\bibitem{macche} S. Cerreia-Vioglio, F. Maccheroni, M. Marinacci and L. Montrucchio.  \emph{  Uncertainty averse preferences.}  Journal of Economic Theory
Volume 146, Issue 4, July 2011, Pages 1275-1330.

\bibitem{CHOJNOWSKA-MICHALIK_1978} Chojnowska-Michalik A. (1978),
\emph{Representation Theorem for General Stochastic Delay Equations.}
in {\slshape Bull. Acad. Polon. Sci.S\'{e}r. Sci. Math. Astronom. Phys.},
\textbf{26} 7, pp. 635-642.

\bibitem{CFGRT} A. Cosso, S. Federico, F. Gozzi, M. Rosestolato and N. Touzi.  \emph{  Path-dependent equations and viscosity solutions in infinite dimension.}  Annals of Probability
Volume 46, Issue 1 (2018), Pages 126-174.


\bibitem{DAPRATO_ZABCZYK_RED_BOOK} Da Prato, G. and Zabczyk, J. (2014),
\emph{Stochastic equations in Infinite Dimensions}. Cambridge University Press, Second Edition.


 \bibitem{DUNS-Go-Tran} Dunsmuir, W.T., Goldys, B., and C.V. Tran (2016).\emph{ Stochastic delay differential equations
as weak limits of autoregressive moving average time series.}  Working paper, University of New
South Wales.

\bibitem{DYBVIG_LIU_JET_2010} Dybvig, P.H. and Liu, H. (2010). \emph{ Lifetime consumption and investment: retirement and constrained borrowing}. {\slshape Journal of Economic Theory}, 145, pp. 885-907.

\bibitem{DYBVIG_LIU_JET_2010} Dybvig, P.H. and Liu, H. (2010). \emph{ Lifetime consumption and investment: retirement and constrained borrowing}. {\slshape Journal of Economic Theory}, 145, pp. 885-907.
\bibitem{DGZZ} Djeiche, B. Gozzi, F. Zanco, G. and Zanella, M. (2022). \emph{Optimal portfolio choice with path dependent benchmarked labor income: a mean field model}. {\slshape Stochastic Processes and Applications}, 145 (2022), pp.48-85.

\bibitem{Hahn} Encyclopedia of Mathematics, EMS Press, 2001. Item  "Hahn decomposition".

\bibitem{FABBRI_GOZZI_SWIECH_BOOK} Fabbri, G. Gozzi, F. and Swiech, A. (2017).
\emph{Stochastic Optimal Control in Infinite Dimensions: Dynamic Programming and HJB Equations}. Probability Theory and Stochastich Modelling, vol 82, Springer.
\bibitem{FS} Foellmer, H. and Schweizer, M. (2010).
\emph{Minimal Martingale Measure}.    Encyclopedia of Quantitative Finance, Wiley, 1200-1204.

\bibitem{Flandoli90SICON} Flandoli, F. (1990).
\emph{Lifetime consumption and investment: retirement and constrained borrowing.}
{  Journal of Economic Theory}, 145, pp. 885-907.

\bibitem{FlaZan}
F.~Flandoli and G.~Zanco.
\newblock \emph{An infinite-dimensional approach to path-dependent {K}olmogorov
  equations.}
\newblock { Ann. Probab.}, 44(4), 2643--2693, 2016.

\bibitem{JYC}    Jeanblanc, M., Yor,
M., Chesney,  (2009).
\emph{Mathematical Methods for Financial Markets},
Springer-Verlag.


\bibitem{HaddSICON06}
S.~Hadd, \newblock  \emph{An evolution equation approach to nonautonomous linear systems with state, input, and output delays. }
\newblock{ SIAM, Journal on control and optimization}, 45(1):246--272, 2006.

\bibitem{Kallenberg} Kallenberg O. (1997)
\newblock  \emph{Foundations of
Modern Probability
}, Springer-Verlag.

\bibitem{KARATZAS_SHREVE_91} Karatzsas, I. and Shreve, S.E. (1991).
\emph{Brownian Motion and Stochastic Calculus}, Springer-Verlag

\bibitem{riedel}Q. Lin and F. Riedel, 2014, {\em Optimal Consumption and Portfolio Choice
with Ambiguity}, Working paper, Center for Mathematical Economics, University of Bielefeld.

\bibitem{LORENZ_2006} Lorenz, R. (2006)
\emph{Weak Approximation of Stochastic Delay
	Differential Equations with Bounded Memory by
	Discrete Time Series}. PhD dissertation, Humboldt University.

 \bibitem{McLaughling} K.J. McLaughling, \emph{Wage rigidity?}  (1993), Journal of Monetary Economics,
34(3), pp. 383–414.

\bibitem{Mao-Sabanis} X. Mao and S. Sabanis, \emph{ Delay geometric Brownian motion in financial
option valuation} (2013)- Stochastics: An International Journal of Probability and
Stochastic Processes, 85(2), pp. 295–320.

\bibitem{MEGHIR_PISTAFERRI_2004} Meghir, C., Pistaferri, L. (2004). \emph{Income variance dynamics and heterogeneity.} { Econometrica}, 72(1), 1-32.




\bibitem{MERTON} Merton, R. (1990). \emph{Continuous-time finance}. Basil Blackwell, Oxford.

\bibitem{MOHAMMED_BOOK_96} Mohammed, SE.A.
\emph{Stochastic Differential Systems with Memory: Theory, Examples and Applications}. In: Decreusefond L.,{\O}ksendal B., Gjerde J., \"{U}st\"{u}nel A.S. (eds) Stochastic Analysis and Related Topics VI. Progress in Probability, vol 42. Birkh\"auser, Boston, MA.

\bibitem{NN}A. Neufeld, M. Nutz (2018).
\emph{Robust Utility Maximization with Lévy Processes}.
Mathematical Finance, Vol. 28, No. 1, pp. 82-105.

\bibitem{Protter} Protter, P.E. (2005)
\emph{Stochastic Integration and Differential Equations}, Springer-Verlag Berlin.

\bibitem{RenRosest} Z. Ren and M. Rosestolato.  \emph{  Viscosity solutions of path-dependent PDEs with randomized time.}
SIAM Journal on Mathematical Analysis 52 (2), 1943-1979.

\bibitem{REISS_2002} Rei{\ss}, M. (2002).
\emph{Nonparametric estimation for stochastic delay differential equations}. PhD dissertation, Humboldt University.

\bibitem{Rosestolato17}
Rosestolato, M., (2017).
\emph{Path-dependent SDEs in Hilbert spaces.}
{ International Symposium on BSDEs, 261-300.}


\bibitem{RosestolatoSwiech17}
Rosestolato, M., Swiech, A., (2017).
\emph{Partial Regularity of Viscosity Solutions for a class of Kolmogorov Equations arising from Mathematical Finance.}
{ J. Differential Equations 262 (2017), no. 3, 1897--1930.}

\bibitem{Sima} Mehri, Sima; Scheutzow, Michael (2021).
\emph{A stochastic Gronwall lemma and well-posedness of path-dependent SDEs driven by martingale noise}. Latin American Journal of Probability and Mathematical Statistics. 18: 193-209.

\bibitem{TranPHD} C.V. Tran (2016).\emph{ Convergence of Time Series Processes to Continuous Time Limits.}
     PHD dissertation, University of New South Wales.
     Available at
     "http://unsworks.unsw.edu.au/fapi/datastream/unsworks:11500/SOURCE01?view=true"

  \bibitem{VICERIA} Viceira, L. M. (2001) \emph{Optimal Portfolio Choice for Long-Horizon
Investors with Nontradable Labor Income}. {\slshape The Journal of Finance, LVI, no. 2 pp. 433-470.}


\bibitem{VINTER} Vinter, R. B.  (1975)
\emph{A representation of solution to stochastic delay equations}, Imperial College, Report of the Department of Computing and Control.
\bibitem{Wiki} Wikipedia, item \href{ https://en.wikipedia.org/wiki/Stochastic_Gronwall_inequality}{Stochastic Gronwall Lemma}.
\end{thebibliography}
\end{document}